\theoremstyle{plain}
\newtheorem{teo}{Theorem}[chapter]
\newtheorem{prop}[teo]{Proposition}
\newtheorem{lemma}[teo]{Lemma}
\newtheorem{cor}[teo]{Corollary}
\theoremstyle{definition}
\newtheorem{mydef}[teo]{Definition}
\newtheorem{ex}[teo]{Example}
\theoremstyle{remark}
\newtheorem{fact}[teo]{Fact}
\newtheorem{remark}[teo]{Remark}
\newtheorem{remarks}[teo]{Remarks}
\newtheorem{claim}{Claim}
\newtheorem{assumption}{Assumption}
\renewcommand{\qedsymbol}{$\blacksquare$}
\newcommand{\defeq}{\mathrel{\mathop:}=}
\newcommand{\g}{\mathbb{G}}
\newcommand{\zd}{\mathbb{Z}^d}
\newcommand{\zdp}{\mathbb{Z}_{+}^d}
\newcommand{\ds}{\mathcal{A}^{\zd}}
\newcommand{\gds}{\mathcal{A}^{\g}}
\newcommand{\gr}{\mathfrak{T}}
\newcommand*{\QEDB}{\hfill\ensuremath{\square}}
\DeclareRobustCommand{\rchi}{{\mathpalette\irchi\relax}}
\newcommand{\irchi}[2]{\raisebox{\depth}{$#1\chi$}} % inner command, used by \rchi
\renewcommand{\chaptermark}[1]{\markboth{\MakeUppercase{#1}}{}}
\renewcommand{\restriction}{\mathord{\upharpoonright}}
\begin{document}
\frontmatter 
% cabeçalho para as páginas das seções anteriores ao capítulo 1 (frontmatter)
\fancyhead[RO]{{\footnotesize\rightmark}\hspace{2em}\thepage}
\setcounter{tocdepth}{2}
\fancyhead[LE]{\thepage\hspace{2em}\footnotesize{\leftmark}}
\fancyhead[RE,LO]{}
\fancyhead[RO]{{\footnotesize\rightmark}\hspace{2em}\thepage}

\onehalfspacing  % espaçamento

% ---------------------------------------------------------------------------- %
% CAPA
% Nota: O título para as dissertações/teses do IME-USP devem caber em um 
% orifício de 10,7cm de largura x 6,0cm de altura que há na capa fornecida pela SPG.
\thispagestyle{empty}
\begin{center}
    \vspace*{2.3cm}
    \textbf{\Large{Gibbs Measures on
 Subshifts}}\\
    
    \vspace*{1.2cm}
    \Large{Bruno Hideki Fukushima Kimura}
    
    \vskip 2cm
    \Large{
    A Thesis Submitted \\[-0.15cm]
    to \\[-0.15cm]
    Instituto de Matemática e Estatística\\[-0.15cm]
    of the\\[-0.15cm]
    University of São Paulo\\
    in partial fulfillment of the requirements\\[-0.15cm]
    for the\\[-0.15cm]
    Degree of Master of Science\\
    }
    
    \vskip 2cm
    Program: Applied Mathematics\\
    Supervisor: Prof. Dr. Rodrigo Bissacot\\
    
   	\vskip 1cm
    \normalsize{During the development of this work the author received financial support from CNPq}
    
    \vskip 0.5cm
    \normalsize{São Paulo, August 2015}
\end{center}

% ---------------------------------------------------------------------------- %
% Página de rosto (SÓ PARA A VERSÃO DEPOSITADA - ANTES DA DEFESA)
% Resolução CoPGr 5890 (20/12/2010)
%
% IMPORTANTE:
%   Coloque um '%' em todas as linhas
%   desta página antes de compilar a versão
%   final, corrigida, do trabalho
%
%
%\newpage
%\thispagestyle{empty}
%   \begin{center}
%       \vspace*{2.3 cm}
%       \textbf{\Large{Gibbs measures on subshifts}}\\
%        \vspace*{2 cm}
%    \end{center}

%    \vskip 2cm

%    \begin{flushright}
%	Esta é a versão original da dissertação elaborada pelo\\
%	candidato Bruno Hideki Fukushima Kimura, tal como \\
%	submetida à Comissão Julgadora.
%    This is the original version of the thesis elaborated by \\
%    candidate Bruno Hideki Fukushima Kimura, such as \\
%    submitted to the examination board.
%    \end{flushright}

%\pagebreak

% ---------------------------------------------------------------------------- %
% Página de rosto (SÓ PARA A VERSÃO CORRIGIDA - APÓS DEFESA)
% Resolução CoPGr 5890 (20/12/2010)
%
% Nota: O título para as dissertações/teses do IME-USP devem caber em um 
% orifício de 10,7cm de largura x 6,0cm de altura que há na capa fornecida pela SPG.
%
% IMPORTANTE:
%   Coloque um '%' em todas as linhas desta
%   página antes de compilar a versão do trabalho que será entregue
%   à Comissão Julgadora antes da defesa
%
%
\newpage
\thispagestyle{empty}
    \begin{center}
        \vspace*{2.3 cm}
        \textbf{\Large{Gibbs Measures on
 Subshifts}}\\
        \vspace*{2 cm}
    \end{center}

   \vskip 2cm

    \begin{flushright}
	Esta versão da dissertação contém as correções e alterações sugeridas\\
	pela Comissão Julgadora durante a defesa da versão original do trabalho,\\
	realizada em 28/08/2015. Uma cópia da versão original está disponível no\\
	Instituto de Matemática e Estatística da Universidade de São Paulo.

    \vskip 2cm

    \end{flushright}
    \vskip 4.2cm

    \begin{quote}
    \noindent Comissão Julgadora:
    
    \begin{itemize}
		\item Prof. Dr. Rodrigo Bissacot Proença (orientador) - IME-USP 
		\item Prof. Dr. Anatoli Iambartsev - IME-USP 
		\item Prof. Dr. Leandro Martins Cioletti - UnB 
    \end{itemize}
      
    \end{quote}
\pagebreak

\pagenumbering{roman}     % começamos a numerar 

% ---------------------------------------------------------------------------- %
% Agradecimentos:
% Se o candidato não quer fazer agradecimentos, deve simplesmente eliminar esta página 
\chapter*{Acknowledgments}
Thanks.

% ---------------------------------------------------------------------------- %
% Resumo
\chapter*{Resumo}

\noindent KIMURA, B. H. F. \textbf{Medidas de Gibbs em subshifts}. 
2015. 89 f.
Dissertação de Mestrado - Instituto de Matemática e Estatística,
Universidade de São Paulo, São Paulo, 2015.
\\

Nós estudamos as propriedades de medidas de Gibbs para funções com variação $d$-somável 
definidas em um subshift $X$. Baseado no trabalho de Meyerovitch \cite{meyerovitch:13}, provamos que se $X$ é um subshift de tipo finito (STF), então qualquer
medida de equilíbrio é também uma medida de Gibbs. Embora a definição fornecida por Meyerovitch não
faz qualquer menção à esperanças condicionais, mostramos que no caso em que $X$ é um STF, é possível caracterizar estas medidas
em termos de 
noções mais familiares apresentadas na literatura (por exemplo, \cite{cap:76},\cite{georgii:11},\cite{ruelle:04}).
\\

\noindent \textbf{Palavras-chave:} Medidas de Gibbs, medidas de equilíbrio, subshifts.

% ---------------------------------------------------------------------------- %
% Abstract
\chapter*{Abstract}
\noindent KIMURA, B. H. F. \textbf{Gibbs measures on subshifts}. 
2015. 89 p.
Master's Thesis - Instituto de Matemática e Estatística,
University of São Paulo, São Paulo, 2015.
\\

We study the properties of Gibbs measures for functions with $d$-summable variation 
defined on a subshift $X$. Based on Meyerovitch's work \cite{meyerovitch:13}, we prove that if $X$ is a subshift of finite type (SFT), then any
equilibrium measure is also a Gibbs measure. Although the definition provided by Meyerovitch does not
make any mention to conditional expectations, we show that in the case where $X$ is a SFT it is possible to characterize these measures 
in terms of more familiar
notions presented in the literature (e.g. \cite{cap:76},\cite{georgii:11},\cite{ruelle:04}).
\\

\noindent \textbf{Keywords:} Gibbs measures, equilibrium measures, subshifts.

% ---------------------------------------------------------------------------- %
% Sumário
\tableofcontents    % imprime o sumário

% ---------------------------------------------------------------------------- %
\chapter{List of Abbreviations}
\begin{tabular}{ll}
         a.e.       & almost everywhere\\ 
         DLR        & Dobrushin-Lanford-Ruelle\\ 
         m.p.d.s.   & measure preserving dynamical system\\
         SFT        & subshift of finite type\\ 
         SFTs       & subshifts of finite type\\
         t.d.s.     & topological dynamical system \\ 

\end{tabular}

% ---------------------------------------------------------------------------- %
\chapter{List of Symbols}
\begin{tabular}{ll}
       $a$ & Typical symbol from the alphabet $\mathcal{A}$\\
		$\alpha, \beta$ & Typical $\mu$-partitions of $X$ \\
		$\alpha \approx \beta$ & Equivalence of $\mu$-partitions \\
		$\alpha\vee \beta$ & Common refinement of $\alpha$ and $\beta$ \\
		$\mathcal{A}$ & Finite alphabet\\
        $\gds$ & $\g$-full shift \\
        $\mathcal{A}^{\Lambda}$ & Set of all functions from $\Lambda$ into $\mathcal{A}$\\
        $\mathsf{Aut}(X)$ & Set of all Borel automorphisms of $X$ \\
        $\mathcal{B}$ & $\sigma$-algebra of subsets of $X$ \\
        $B(x,r)$ & Open ball of radius $r$ centered at the point $x$ in $\ds$\\
        $\beta \succeq \alpha$ & Finer $\mu$-partition \\
        $\mathcal{C}$ & Collection of all Borel subsets of $\mathcal{R}$ \\ 
        $d$ & Dimension of the lattice \\
        $\delta_{n}(f)$ & n-th variation of $f$ \\
        $\Delta, \Lambda$ & Typical subsets of $\g$ \\
        $D_{\mu,\mathcal{R}}$ & Radon-Nikodym derivative of $\mu$ with respect to $\mathcal{R}$\\
		$\emptyset$ & Empty set \\
        $\EuScript{F}$ & Sub-$\sigma$-algebra of $\mathcal{B}$ \\
        $\mathcal{F}$ & Collection of patterns\\
		$\mathcal{F}(X)$ & Group of Borel automorphisms that generates $\gr^{0}$\\
        $\g$ & The infinite lattice $\zd$ or $\zdp$ \\
       $h_{\mu}(T)$ & Entropy of a m.p.d.s. \\
       $h_{\mu}(T,\alpha)$ & Dynamical entropy of a m.p.d.s. relative to $\alpha$ \\
       $H_{\mu}(\alpha)$ & Entropy of $\alpha$ \\
       $\mathsf{Homeo}(X)$ & Set of all homeomorphisms of $X$ \\
        $i,j,k$ & Typical elements of $\g$ \\
        $\mathsf{id}_{X}$ & Identity mapping of $X$ \\
        $I_{\alpha}$ & Information function of $\alpha$ \\
        $I_{\alpha|\EuScript{F}}$ & Conditional information function of $\alpha$ given $\EuScript{F}$ \\
        $l,m,n$ & Integers \\
\end{tabular}

\pagebreak
\pagebreak
\begin{tabular}{ll}
        $\Lambda_{n}$ & Open box about the origin of radius $n$ \\
        $\textnormal{Loc}(X)$ & Set of all real-valued local functions on $X$ \\
        $\textnormal{Loc}^\mathbb{Q}(X)$ & Set of all rational-valued local functions on $X$ \\ 
        $\mathcal{M}(T)$ & Set of all $T$-invariant Borel probability measures on $X$\\
        $\mathbb{N}$ & Set of all positive integers \\	
        $\nu_l$ & Left counting measure of $\mu$\\
        $\nu_r$ & Right counting measure of $\mu$\\
        $[\omega]$ & Cylinder with configuration $\omega$\\
        $\phi$ & $\mathcal{R}$-cocycle\\            
        $\phi_f$ & $\gr$-cocycle defined by $f$\\
		$\pi_j$ & Projection of $\gds$ onto the $j$-th coordinate \\
		$\pi_l$ & Left projection of $\mathcal{R}$\\
        $\pi_r$ & Right projection of $\mathcal{R}$\\  
     	 $p(f)$ &  Pressure of $f$\\
     	 $\mathcal{P}(\mathcal{A})$ & Power set of $\mathcal{A}$ \\	
        $\mathbb{Q}$      & Set of all rational numbers \\
        $\mathbb{R}$ & Set of all real numbers \\
        $\overline{\mathbb{R}}$ & Set of all extended real numbers \\
        $\mathcal{R}_G$ & Orbit relation of $G$ \\
        $\mathcal{R}(x)$ & Equivalence class of $x$ \\
        $\mathcal{R}(A)$ & $\mathcal{R}$-saturation of $A$ \\
        $\mathscr{S}_A$ & Collection of all finite subsets of $A$\\
        $SV_d(X)$ & Space of all functions with $d$-summable variation on $X$ \\
        $\sigma^{j}$ & Shift or translation by $j$\\
		$\Sigma_{A_{1},\dots,A_{d}}$ & Matrix subshift\\
        $\gr$ & Gibbs relation \\
        $\gr^0$ & Topological Gibbs relation \\
		$\tau^{\g}$ & Prodiscrete topology on $\gds$\\
        $\theta$ & Flip map \\
        $\mathsf{X}_{\mathcal{F}}$ & Subshift associated to $\mathcal{F}$\\
       $X_\Lambda$ & Set of all restrictions of elements of $X$ to the set $\Lambda$ \\
        $\mathbb{Z}$      & Set of all integers \\
        $\mathbb{Z}_{+}$      & Set of all nonnegative integers \\
        $\zd$ & $d$-dimensional integer lattice \\
        $\zdp$ & $d$-dimensional nonnegative integer lattice \\
        $\|\cdot\|$ & Maximum ``norm'' on $\g$ \\
        $\|\cdot\|_\infty$ & Supremum norm \\
        $\|\cdot\|_{SV_d}$ & Norm of the space $SV_d(X)$ \\
\end{tabular}

% ---------------------------------------------------------------------------- %
% Listas de figuras e tabelas criadas automaticamente
%%\listoffigures            
%%\listoftables            

% ---------------------------------------------------------------------------- %
% Capítulos do trabalho
\mainmatter

% cabeçalho para as páginas de todos os capítulos
\fancyhead[RE,LO]{\thesection}

%\singlespacing              % espaçamento simples
\onehalfspacing            % espaçamento um e meio
%\doublespacing

%% ------------------------------------------------------------------------- %%
\chapter{Introduction}
\label{cap:intro}

The theory of Gibbs measures is one of the most successful and developed branches of mathematics motivated by ideas from physics. 
A great number of specialists in rigorous statistical mechanics, ergodic theory and symbolic dynamics are or were engaged in topics related to 
some notion of Gibbsianess.

Historically, the first paper with a rigorous treatment on this subject dates back to \cite{Bogo:49} (see also \cite{Bogo:69} for a more up to date exposition) 
by N.N. Bogolyubov and B.I. Khatset. Following the same ideas as presented in this paper R.L. Dobrushin \cite{dob:68} and,  independently, O.E. Lanford with 
D. Ruelle \cite{lanfordruelle:69} introduced the notion of Gibbsianess in the context of statistical mechanics by means of conditional probabilities. Due to its 
physical content and probabilistic interpretation this approach is widely adopted until today both in mathematical physics and probability theory. Such measures are often referred to
 as DLR measures in honor to them. 

On the other hand, these papers together with one by R.A. Minlos \cite{minlos:67} motivated the study of Gibbs measures in (differentiable) dynamical systems
started by Ya. G.  Sinai \cite{sinai:72}. Sinai introduced Markov partitions and symbolic dynamics for Anosov diffeomorphisms, subjects for which R. Bowen
made several contributions, for example, one of the main references for Gibbs measures in symbolic dynamics is the book \cite{bowen:08}. By the 
influence of Ruelle, the notion was also introduced for $\mathbb{Z}^d$-actions on compact metrizable spaces by Capoccaccia in \cite{cap:76}. 
Ruelle also wrote one of the classical books \cite{ruelle:04} towards to Gibbs measures focusing on dynamic aspects, this book (together with Bowen's) are used by the ergodic theory community working in subfield today known as thermodynamic formalism.

So, the notion of Gibbsianess developed by scientists working on this boundary between mathematical physics and dynamical systems, 
together with a several number of papers and books published at the 70's apparently joined the areas. However, the approach adopted by the 
communities to handle with Gibbs measures split in two different ways: while the probabilicists and mathematical physicists follow Dobrushin's ideas 
and the majority think in terms of conditional expectations and thermodynamic limits (see one of the classical modern books c.f. Georgii \cite{georgii:11}), 
the dynamicists follow the approach introduced by Bowen, Ruelle and Capocaccia.

In addition, K. Schmidt and K. Petersen \cite{schpet} studied an abstract notion of Gibbs measures for one-dimensional SFTs (over finite alphabets) whose connection 
with the previous definitions is not obvious, although they used the same name. In 2013, T. Meyerovitch generalized this definition for multidimensional 
subshifts (over finite alphabets), and proved that for SFTs any equilibrium measure for a potential belonging to suitable class of functions (functions with $d$-summable variation) 
is also a Gibbs measure. This result generalizes another one presented in \cite{ruelle:04}, since it expands the class of potential in which this result holds. 

The communities mentioned above know that all these definitions do not always coincide, see \cite{Fer} and \cite{sarig:15} for some examples from the 
probabilistic and dynamic point of view, respectively. Nevertheless, there exist classes of shifts and potentials for which the equivalence of these several notions of Gibbsianess holds. 
One classical reference for positive results, that is, showing the equivalence of some definitions is provided by Keller's book \cite{keller:98}. Inspired by the Capocaccia's definition of 
Gibbs measures he showed that a definition used by the dynamical systems community for the full shift over a finite alphabet in $\mathbb{Z}^d$ coincides with the notion of 
DLR measure for the class of potentials with $d$-summable variation. In particular, for the one-dimensional case, he proved that these measures are 
Gibbs in the Bowen's sense. Recently, S. Muir extended the results obtained by Keller showing that a natural extension of the Capocaccia's and the DLR 
definitions coincide when the configuration space is $\mathbb{N}^{\mathbb{Z}^d}$.

Once in $d=1$ the existence of the Ruelle operator (a standard tool in one-dimensional thermodynamic formalism) is ensured, L. Cioletti and A. O. Lopes \cite{Cio:14} 
showed the equivalence of some notions of Gibbs measures for Walters potentials defined on the full shift over a finite alphabet, such as: DRL measures, 
measures constructed with the Ruelle operator and thermodynamic limits measures. 

The thesis is organized as follows: we dedicate Chapter \ref{cap:shifts} to introduce one of the main objects of study in this text, the so-called shift spaces or subshifts. In statistical mechanics,
we commonly deal with the most simple kind of a shift space, the full shift. In this context, the full shift may be interpreted essentially as the configuration space
of a system of spins arranged at the sites of a countably infinite lattice (generally the $d$-dimensional integer lattice $\zd$), where these these spins are restricted to a finite set. An example of a 
full shift is the set $\{-1,+1\}^{\zd}$ which describes the configuration space used by the most famous model in statistical physics employed to
explain the phenomenon of ferromagnetism, the Ising model (see \cite{georgii:11}).
In addition to its importance in the study of lattice models in
statistical mechanics, the study of shift spaces constitute a beautiful branch of mathematics known as symbolic dynamics. Most of the results presented 
in Chapter \ref{cap:shifts} were based on the masterpiece written by Lind and Marcus \cite{lind:95}.

In order to provide in Chapter \ref{cap:gibbs} a connection between Gibbs and equilibrium measures on subshifts, we devote Chapter \ref{cap:fm} to present a few preliminary results about
the thermodynamic formalism. These elements allow us to define the entropy $h_{\mu}(T)$ of a measure preserving dynamical system $(X,\mathcal{B},\mu,T)$. This quantity
describes the maximum amount of information per unit of time that can be gained about the system (with time evolution determined by $T$) at a measurement process.   
Almost all examples given in Chapter \ref{cap:fm} concerns the entropy of Bernoulli shifts, for further examples see \cite{walters},\cite{pollicott},\cite{viana}.  
Later, we define an equilibrium measure as being an invariant probability measure that is distinguished by means of a variational principle, in the sense that this measure maximizes a certain quantity of the type ``entropy + energy''. 
More precisely, we let the pressure of a potential $f$ be described by

\begin{equation}
p(f) = \sup\limits_{\mu \in \mathcal{M}(T)}\left\{h_{\mu}(T) + \int_{X} f\,d\mu\right\},
\end{equation}
and define an equilibrium measure as a $T$-invariant probability measure $\mu$ which attains the supremum above. For a deeper
study of some aspects of ergodic theory of equilibrium states, we strongly recommend the reader to see \cite{keller:98}. 

The definition of a Gibbs measure for subshifts goes back to Schmidt \cite{schmidt:97}, Petersen and Schmidt \cite{schpet}, Aaronson and Nakada \cite{aaronson:07}, and Meyerovitch \cite{meyerovitch:13}.
Differently from the usual approach, this definition was provided by using more abstract concepts involving conformal measures, without mentioning conditional expectations. 
Thus, we devote Chapter $\ref{cap:res}$ to introduce some basic notions about conformal measures. For further references, see \cite{feldman:77}, \cite{schpet}.

In Chapter \ref{cap:gibbs}, we begin the study of Gibbs measures for a specific class of functions, the so-called functions with $d$-summable variation (\cite{meyerovitch:13}) or
regular local energy functions (\cite{keller:98}, \cite{muir}). Adopting Meyerovitch's approach, we provide the definitions of a Gibbs measure and of a topological
Gibbs measure for such a function $f$. Although we define a Gibbs measure in two different ways, we show that this second one is a relaxed notion which coincides with the first one for subshifts of finite type (SFTs). 
We also show that every equilibrium measure for a function with $d$-summable variation is
a topological Gibbs measure. In particular, if we suppose that we are dealing with a SFT, then every equilibrium measure is also a Gibbs measure.  

The last section of Chapter \ref{cap:gibbs} is completely devoted to connect the notion of a Gibbs measure provided by Meyerovitch with more familiar definitions  
adopted in the literature (e.g. \cite{cap:76}, \cite{georgii:11}, \cite{ny:08}, \cite{sarig:09}, \cite{ruelle:04}). In $1976$, Capocaccia \cite{cap:76} gave a definition of a 
Gibbs measure in the general context of compact metrizable spaces where $\zd$ acts by an expansive group of homeomorphisms. Using the techniques developed in Chapter
\ref{cap:res}, we proved that in the case where we are dealing with subshifts of finite type, Meyerovitch's definition is a particular case of Capocaccia's notion of Gibbs states.
In order to connect our approach with the adopted by Georgii \cite{georgii:11}, for each subshift $X$ and each function $f$ with $d$-summable variation 
we defined a corresponding family $\gamma = (\gamma_{\Lambda})_{\Lambda \in \mathscr{S}}$ of proper probability kernels
satisfying the compatibility relation $\gamma_{\Delta}\gamma_{\Lambda} = \gamma_{\Delta}$ whenever $\Lambda \subseteq \Delta \subseteq \zd$, and proved 
that every Gibbs measure for $f$ satisfies the equation
\begin{equation}\label{dlrdlrdlr}
\mu(A|\mathscr{F}_{\Lambda^c}) = \gamma_{\Lambda}(A|\,\cdot\,){}
\end{equation}
for each Borel set $A$ of $X$ and each $\Lambda$ in $\mathscr{S}$. Conversely, if we suppose that $\mu$ is a probability measure that satisfies (\ref{dlrdlrdlr}),
then $\mu$ is a topological Gibbs measure for $f$. In particular, if $X$ is a SFT, then a probability measure $\mu$ is a Gibbs measure if and only if
$\mu$ satisfies (\ref{dlrdlrdlr}). The set of equations above are often referred to as DLR equations, named for Dobrushin, Lanford and Ruelle.

%%%%%%%%%%%%%%%%%%%%%%%%%%%%%%%%%%%%%%%%%%%%%%%%%%%%%%%%%%%%%%%%%%%%%%%%%%%%%%%%%%%%%%%%%%%%%%%%%%%%%%%%%%%%%%%%%%%%%%%%%%%%%%%%%%%%%%%%%%%%%%%%%%%%%%%%%%%%%%%%%%
%%%%%%%%%%%%%%%%%%%%%%%%%%%%%%%%%%%%%%%%%%%%%%%%%%%%%%%%%%%%%%%%%%%%%%%%%%%%%%%%%%%%%%%%%%%%%%%%%%%%%%%%%%%%%%%%%%%%%%%%%%%%%%%%%%%%%%%%%%%%%%%%%%%%%%%%%%%%%%%%%%
%%%%%%%%%%%%%%%%%%%%%%%%%%%%%%%%%%%%%%%%%%%%%%%%%%%%%%%%%%%%%%%%%%%%%%%%%%%%%%%%%%%%%%%%%%%%%%%%%%%%%%%%%%%%%%%%%%%%%%%%%%%%%%%%%%%%%%%%%%%%%%%%%%%%%%%%%%%%%%%%%%
%%%%%%%%%%%%%%%%%%%%%%%%%%%%%%%%%%%%%%%%%%%%%%%%%%%%%%%%%%%%%%%%%%%%%%%%%%%%%%%%%%%%%%%%%%%%%%%%%%%%%%%%%%%%%%%%%%%%%%%%%%%%%%%%%%%%%%%%%%%%%%%%%%%%%%%%%%%%%%%%%%
%%%%%%%%%%%%%%%%%%%%%%%%%%%%%%%%%%%%%%%%%%%%%%%%%%%%%%%%%%%%%%%%%%%%%%%%%%%%%%%%%%%%%%%%%%%%%%%%%%%%%%%%%%%%%%%%%%%%%%%%%%%%%%%%%%%%%%%%%%%%%%%%%%%%%%%%%%%%%%%%%%
%%%%%%%%%%%%%%%%%%%%%%%%%%%%%%%%%%%%%%%%%%%%%%%%%%%%%%%%%%%%%%%%%%%%%%%%%%%%%%%%%%%%%%%%%%%%%%%%%%%%%%%%%%%%%%%%%%%%%%%%%%%%%%%%%%%%%%%%%%%%%%%%%%%%%%%%%%%%%%%%%%

\chapter{Shift spaces}
\label{cap:shifts}

The aim of this chapter is to introduce the basic properties of shift spaces. These objects are of great importance in the study of classical 
equilibrium statistical mechanics and dynamical systems. 
In this work, we restrict our attention to Gibbs and equilibrium measures on multidimensional shifts over finite alphabets,
but if the reader is interested in results concerning the case of countably infinite alphabets, see Muir \cite{muir} and Sarig \cite{sarig:09}.

In the following, we introduce the definition of a full shift and its topological aspects. In the last section, we will study the concept
of shift spaces (also called subshifts) and provide some examples.

\section{Full shifts}\label{lero}

In this work, we will use $\mathbb{N}$, $\mathbb{Z}_{+}$, $\mathbb{Z}$, $\mathbb{Q}$, $\mathbb{R}$, and $\overline{\mathbb{R}}$ to denote the sets 
of positive integers, of nonnegative integers, of integers, of rational numbers, of real numbers, and of extended real numbers, respectively.
Adopting the terminology of symbolic dynamical systems, a finite set of symbols $\mathcal{A}$ will be referred to as an alphabet.

From now on, let us fix a positive integer $d$ and let $\g$ be the infinite lattice $\zd$ or $\zdp$.
Note that $\g = \zd$ (resp. $\g = \zdp$) is a group (resp. monoid) under the usual operation of addition. 
Let us also define $\|i\| \defeq \max\limits_{1 \leq l \leq d}|i_l|$ for each point $i$ in $\g$, and let $\Lambda_n \defeq \{i \in \g : \|i\| < n\}$ for 
every nonnegative integer $n$. It is easy to check that $\Lambda_{0} = \emptyset$, and for each positive integer $n$ we have
\begin{equation}
\Lambda_{n} = \{-(n-1), \dots,-1, 0, 1, \dots, n-1\}^d
\end{equation}
in the case where $\g = \zd$, and
\begin{equation}
\Lambda_{n} = \{0, 1, \dots, n-1\}^d
\end{equation}
in the case where $\g = \zdp$.

\begin{mydef}
The $\g$-full shift over the alphabet $\mathcal{A}$ is defined by $\gds$, where $\gds$ is the standard mathematical notation for 
the set of all functions from $\g$ into $\mathcal{A}$.
\end{mydef}

\begin{ex}
The full shift $\{0,1\}^{\mathbb{Z}}$ corresponds to the set of all bi-infinite binary sequences.
\end{ex}

\begin{ex}\label{ising1}
The full shift $\{-1, +1\}^{\zd}$ plays an important role in the study of a special model in statistical mechanics, the so-called Ising model.
The Ising model is a mathematical model of ferromagnetism that describes the statistical behavior of a system
consisting of magnetic dipole moments of atomic spins located
at the sites of a crystal lattice. These spins 
may be oriented upwards or downwards (corresponding to the values $+1$ and $-1$, respectively) and are allowed to interact with their neighbors.
See \cite{georgii:11} for further information on the topic discussed in this example. 
\end{ex}

Now, let us fix some notation. As usual, for every element $x$ of $\gds$, we will write $x_i$ instead of $x(i)$ for each point $i$ in $\g$, and let $(x_i)_{i \in \g}$ denote the element $x$. 
Let $\Lambda$ and $\Delta$ be subsets of $\g$ such that  $\Lambda \subseteq \Delta$, then the restriction of configuration $\omega$ in $\mathcal{A}^{\Delta}$ to the subset $\Lambda$ will be denoted by 
$\omega_{\Lambda}$. In this same setting, if we let $\eta \in \mathcal{A}^{\Lambda}$ and $\zeta \in \mathcal{A}^{\Delta \backslash \Lambda}$, then the juxtaposition
$\eta \zeta$ will be defined as the element of $\mathcal{A}^{\Delta}$ such that $(\eta \zeta)_{\Lambda}=\eta$ and $(\eta \zeta)_{\Delta \backslash \Lambda} = \zeta$. 

\begin{mydef}
For each $j$ in $\g$ the map $\sigma^j : \gds \rightarrow \gds$
given by 
\begin{equation}
\sigma^j (x) = \left(x_{i+j}\right)_{i \in \g}
\end{equation}
for every $x = (x_i)_{i \in \g}$, is called the shift or translation by $j$ . 
\end{mydef}

The next properties follows immediately from the definition above.

\begin{fact}\label{eta}
We have $\sigma^{\mathbf{0}} = \mathsf{id}$, where $\mathbf{0} = (0,\dots,0)$ is the zero element of $\g$ and
$\mathsf{id}$ is the identity mapping of $\gds$. \qed
\end{fact}

\begin{fact}\label{etaa}
The identity $\sigma^{i+j} = \sigma^{i} \circ \sigma^{j}$ holds for all $i, j \in \g$.
\end{fact}
\begin{proof}
Indeed, we have $(\sigma^{i} \circ \sigma^{j})(x)_k = \sigma^{i}(\sigma^{j}(x))_k = \sigma^{j}(x)_{k+i} = x_{(k+i)+j} = 
x_{k+(i+j)} = \sigma^{i+j}(x)_k$
for each $k$ in $\g$ and each $x$ in $\gds$. It follows that the equality $(\sigma^{i} \circ \sigma^{j})(x) = \sigma^{i+j}(x)$ holds for every $x$ in $\gds$.
\end{proof}

\begin{fact}\label{etaaa}
In the case where $\g = \zd$, each shift map $\sigma^{j}$ is invertible and its inverse is given by $\left(\sigma^{j}\right)^{-1} =  \sigma^{-j}$.\qed
\end{fact}

\section{The topology of $\gds$}

We devote this section to explore the topological properties of a full shift, but, in order to do so, first we need to specify the topology defined on it.

For each point $j$ in $\g$, let $\pi_{j} : \gds \rightarrow \mathcal{A}$ be the projection of $\gds$ onto the $j$-th coordinate defined by letting
$\pi_j (x) = x_{j}$ for each element $x = (x_{i})_{i \in \g}$.
%\begin{eqnarray*}
%\pi_i : \gds &\longrightarrow& \mathcal{A} \\
%x\;\; &\longmapsto& x_i
%\end{eqnarray*}
Naturally, we will consider the set $\mathcal{A}$ endowed with the discrete topology $\tau=\mathcal{P}(\mathcal{A})$ 
and endow the full shift $\gds$ with the initial topology with respect to the family of projections $(\pi_i)_{i \in \g}$. In other words, we will always consider  
the full shift $\gds$ endowed with the product topology\footnote{In this case one also says that $\tau^{\g}$ is the prodiscrete topology.} $\tau^{\g}$. 

Our main objective in this section is to prove that $\gds$ is a compact metrizable space. First, let us define a function $\rho : \gds \times \gds \rightarrow {[}0,+\infty)$ by letting
\begin{equation}\label{metric}
\rho (x,y) = 
\begin{cases}
2^{-n(x,y)}  & \text{if} \; x \neq y ,\; \text{where} \; n(x,y) \defeq \max\left\{n \in \mathbb{Z}_{+} : x_{\Lambda_n} = y_{\Lambda_n}\right\}, \\
0  & \text{if} \; x=y;
\end{cases}
\end{equation}
and show that it is a metric on $\gds$ that generates its topology. Note that $\rho$ is well defined. In fact, if we suppose that $x$ and $y$ are distinct elements of $\gds$, then
there is a point $i$ in $\g$ such that $x_{i} \neq y_{i}$. It follows that the set $\left\{n \in \mathbb{Z}_{+} : x_{\Lambda_n} = y_{\Lambda_n}\right\}$
is nonempty (because $0$ belongs to it) and bounded above, thus it assumes a maximum element.

Now, let us show that $\rho$ is a metric on $\gds$. It is sufficient to prove the triangular inequality, since the
other properties follow immediately from (\ref{metric}). 
For any $x, y,$ and $z$ in $\gds$, if we suppose that $x=z$ or $y=z$, then it is clear that $\rho(x,y) = \rho(x,z) + \rho(z,y)$. Now, in the case where $x \neq z$ and $y \neq z$,
if we let $n = \min\{n(x,z), n(z,y)\}$, we obtain $x_{\Lambda_n}= y_{\Lambda_n}$, thus $\rho(x,y) \leq 2^{-n} < 2^{-n(x,z)} + 2^{-n(z,y)} = \rho(x,z) + \rho(z,y)$.   

\begin{remark}\label{box}
It is easy to prove that for every positive integer $n$ and for every elements $x$ and $y$ of $\gds$, we have
\begin{equation}
\rho(x,y) \leq 2^{-n}\;\, \text{if and only if}\;\, x_{\Lambda_n} = y_{\Lambda_n}.
\end{equation}
\end{remark}

As usual, we will denote the open ball (with respect to $\rho$) centered at the point $x$ in $\gds$ with radius $r > 0$ 
by
\[B(x,r) \defeq \left\{y \in \gds : \rho(x,y) < r\right\}.\] 

The next result shows that $\rho$ is a metric that generates the topology of $\gds$.

\begin{prop}\label{metrizable}
The topological space $(\gds, \tau^{\g})$ is metrizable.
\end{prop}

\begin{proof}
Let us consider the metric $\rho$ introduced in (\ref{metric}) together with its induced topology $\tau_{\rho}$.
Given a point $i$ in $\g$ and a subset $A$ of $\mathcal{A}$, let us show that $\pi_{i}^{-1}(A)$ is an open set with respect to $\rho$. For every element $x$ of $\pi_{i}^{-1}(A)$,
if we let $n = \|i\| + 1$, then for each $y$ in $B(x,2^{-n})$ we have $x_{\Lambda_n} = y_{\Lambda_n}$ (see Remark \ref{box}),
hence $y_i= x_i \in A$.
It follows that $\left\{\pi_{i}^{-1}(A): i \in \g, \, A \subseteq \mathcal{A}\right\} \subseteq \tau_{\rho}$, therefore,
by the definition of product topology, we conclude that $\tau^{\g} \subseteq \tau_{\rho}$.

Conversely, given an arbitrary element $x$ of $\gds$ and a positive number $\epsilon$, let $n$ be a positive integer such that $2^{-n} < \epsilon$ and let
$U = \bigcap\limits_{i \in {\Lambda_{n}}}\pi_{i}^{-1}(\{x_i\})$. Since every point $y$ in $U$ satisfies 
$y_{\Lambda_n}=x_{\Lambda_n}$, it follows that $\rho(x,y) \leq 2^{-n} < \epsilon$.
So, we conclude that $U$ is an element of $\tau^{\g}$ containing $x$ such that $U \subseteq B(x,\epsilon)$. Since the collection $\left\{B(x,\epsilon): x \in \gds, \, \epsilon>0\right\}$ is a basis for
$\tau_{\rho}$, it follows that $\tau_{\rho} \subseteq\tau^{\g}$.
\end{proof}

Using Remark \ref{box} and Proposition \ref{metrizable}, it can be easily verified that a sequence $(x^{(n)})_{n \in \mathbb{N}}$ in $\gds$ converges to a point $x$ if and only if 
for every positive integer $N$ there is another positive integer $n_0$ such that the equality
$x^{(n)}_{\Lambda_N}=x_{\Lambda_N}$	
holds whenever $n \geq n_0$.
%In other words, the sequence $x^{(n)}$ converges to $x$ if and only if for every box ${\Lambda_N} \subseteq \g$, we have that $x^{(n)}_{\Lambda_N}$ and $x_{\Lambda_N}$ 
%coincide for $n$ sufficiently large.

\begin{cor}\label{homeo}
The shift map $\sigma^{j}$ is continuous for each $j$ in $\g$. 
\end{cor}

\begin{proof}
Let $(x^{(n)})_{n \in \mathbb{N}}$ be a sequence in $\mathcal{A}^{\g}$ converging to a point $x$. For each positive integer $N$, if we let $M=\max\limits_{k \in \Lambda_{N}}\|k+j\| + 1$,   
then there is another positive integer $n_0$ such that $n \geq n_0$ implies that $x_{\Lambda_M}^{(n)}=x_{\Lambda_M}$. Therefore, the equality $\sigma^{j}(x^{(n)})_{\Lambda_N}=\sigma^{j}(x)_{\Lambda_N}$ holds whenever $n \geq n_0$.
\end{proof}

For some technical proofs, it is convenient to use the fact that $\gds$ is a compact space.
This result can be easily proved by applying Tychonoff's theorem, however, it can also be derived from
the fact that $\gds$ is metrizable space.

\begin{teo}\label{compact}
The full shift $\gds$ is a compact space.
\end{teo}

\begin{proof}[First proof]
Recall that $\mathcal{A}$ is a finite set endowed with the discrete topology $\tau = \mathcal{P}(\mathcal{A})$, thus the space $(\mathcal{A}, \tau)$ is compact. Using Tychonoff's theorem, the result follows.
\end{proof}

\begin{proof}[Second proof]
Let $(x^{(n)})_{n \in \mathbb{N}}$  be an arbitrary sequence in $\gds$. Let us show that we can find some convergent subsequence of $(x^{(n)})_{n \in \mathbb{N}}$.

First, let us define $S_0 = \mathbb{N}$. Note that there exists an element $\omega_1$ of $\mathcal{A}^{\Lambda_1}$ such that $\big\{n \in S_0 : x^{(n)}_{\Lambda_1} = \omega_1\big\}$ is an infinite set. In fact, if 
$\left\{n \in S_0 : x^{(n)}_{\Lambda_1} = \omega\right\}$ were a finite set for every $\omega \in \mathcal{A}^{\Lambda_1}$, then
it would imply that $S_0 = \bigcup\limits_{\omega \in \mathcal{A}^{\Lambda_1}}\big\{n\in S_0: x^{(n)}_{\Lambda_1}=\omega\big\}$ is also a finite set, a
contradiction. Therefore, we let $\omega_1$ be an element of $\mathcal{A}^{\Lambda_1}$ such that the set $S_1 =\big\{n \in S_0 : x^{(n)}_{\Lambda_1}=\omega_1\big\}$ is infinite.

Suppose that we have already defined an element $\omega_N$ of $\mathcal{A}^{\Lambda_N}$ such that 
$S_N = \big\{n \in S_{N-1} : x^{(n)}_{\Lambda_N}=\omega_N\big\}$ is an infinite set. 
Let us show that we can find an element $\omega_{N+1}$ of $\mathcal{A}^{\Lambda_{N+1}}$ such that the set $\big\{n \in S_N : x^{(n)}_{\Lambda_{N+1}}=\omega_{N+1}\big\}$
is infinite. Using an analogous argument as before, if $\big\{n \in S_N : x^{(n)}_{\Lambda_{N+1}}=\omega\big\}$
were a finite set for every $\omega$ in $\mathcal{A}^{\Lambda_{N+1}}$, then $S_N = \bigcup\limits_{\omega \in \mathcal{A}^{\Lambda_{N+1}}}\big\{n \in S_N : x^{(n)}_{\Lambda_{N+1}}=\omega\big\}$
would be a finite set, a contradiction. Therefore, let us define $\omega_{N+1}$ as the element of $\mathcal{A}^{\Lambda_{N+1}}$ such that the set $S_{N+1} = \big\{n \in S_N : x^{(n)}_{\Lambda_{N+1}}=\omega_{N+1}\big\}$ is infinite. 

In this way, we obtain two sequences $(\omega_N)_{N \in \mathbb{N}}$ and $(S_N)_{N \in \mathbb{N}}$ such that $(\omega_{N+1})_{\Lambda_{N}} = \omega_N$ and $S_{N+1} \subseteq S_{N}$ for each
$N$.
Now, let us define $x$ as the element of $\gds$ that satisfies the identity $x_{\Lambda_N} = \omega_N$ for every $N$. It is easy to check that we can 
construct an increasing sequence $n_{1} < n_{2} < \cdots < n_{l} < \cdots$ of positive integers, where each $n_{l}$ belongs to $S_{l}$. 
We claim that $(x^{(n_l)})_{l \in \mathbb{N}}$ is a subsequence of $(x^{(n)})_{n \in \mathbb{N}}$ which converges to $x$. 
Indeed, for each positive integer $N$, the equation
\[x_{\Lambda_{N}}^{(n_l)}= (\omega_{l})_{\Lambda_{N}}= x_{\Lambda_{N}}\]
holds whenever $l$ is an integer satisfying $l \geq N$.
\end{proof}

\section{Subshifts}

In this section, we will study some particular subsets of full shifts called shift spaces. These objects are most commonly referred to as subshifts and play an important
role in the study of dynamical systems. For the one who is interested in studying 
one-dimensional subshifts in symbolic dynamics, we invite the reader to check the book by Lind and Marcus \cite{lind:95}. And, for the 
reader who is interested in how the study of shifts connects with statistical mechanics, we strongly recommend the books by Georgii \cite{georgii:11} and Keller \cite{keller:98} which are
two masterpieces on this subject. 

In the following, we will present the definition and basic properties of a subshift and turn to few examples. 

\begin{mydef}
A subset $X$ of $\gds$ is said to be a subshift if it is topologically closed and invariant under translations (i.e., the inclusion
$\sigma^{j}(X) \subseteq X$ holds for each $j$ in $\g$).
\end{mydef}

\begin{ex}
Clearly, $X = \emptyset$ and $X = \gds$ are subshifts of $\gds$.
\end{ex}

\begin{ex}
Let $\g$ be the set of all nonnegative integers and let $\mathcal{A}$ be the alphabet $\{0,1\}$. If we let $x$ and $y$ be two elements of 
the full shift $\{0,1\}^{\mathbb{Z_{+}}}$ defined by
\begin{equation*}
x_i =
\begin{cases}
0 &\text{if}\; i\; \text{is even,} \\
1 &\text{if}\; i\; \text{is odd;} 
\end{cases} 
\end{equation*}
and
\begin{equation*}
y_i =
\begin{cases}
1 &\text{if}\;i\; \text{is even,} \\
0 &\text{if}\; i\; \text{is odd;}
\end{cases} 
\end{equation*}
one can easily verify that $X = \{x,y\}$ is a subshift of $\{0,1\}^{\mathbb{Z_{+}}}$. 
\end{ex}

If $\Lambda$ is a finite subset of $\g$, then we will sometimes refer to an element of $\mathcal{A}^{\Lambda}$ as pattern on $\Lambda$.
Given an arbitrary collection $\mathcal{F}$ of patterns\footnote{The patterns in this collection are often referred to as forbidden patterns.}, more precisely, 
given a subset $\mathcal{F}$ of $\bigcup\limits_{\substack{\Lambda \subseteq \g \\ \Lambda\; \text{finite}}}\mathcal{A}^{\Lambda}$, let us define a subset $\mathsf{X}_{\mathcal{F}}$ of $\gds$ by 
\begin{equation}\label{subshift}
\mathsf{X}_{\mathcal{F}} \defeq \left\{x \in \gds: \sigma^{j}(x)_{\Lambda} \notin \mathcal{F}\; \text{for all}\; j \in \g\;\text{and for every}\; \Lambda \subseteq \g\; \textnormal{finite}\right\}.
\end{equation}

The next result will provide us an alternative characterization of subshifts, in the sense that every subshift can be written in the form (\ref{subshift}). 
Later, this characterization will allow us to derive the concept of a subshift of finite type.

\begin{teo}[Equivalent definition for subshifts]
A subset $X$ of $\gds$ is a subshift if and only if it can be written in the form $X = \mathsf{X}_{\mathcal{F}}$ for some
collection $\mathcal{F}$ of patterns.
\end{teo}

\begin{proof}
Let $X$ be a subshift of $\gds$. Since $\gds \backslash X$ is an open set, 
%it follows that for every $x$ in $\gds \backslash X$ there is a positive integer $n$ (depending on $x$) such that $\big\{y \in \gds: x_{\Lambda_n}=y_{\Lambda_n}\big\} \subseteq \gds \backslash X$.
then to each point $x$ in $\gds \backslash X$ we associate a pattern 
$\omega(x)$ given by $\omega(x) = x_{\Lambda_{n}}$, where $n$ is a positive integer such that the set $\big\{y \in \gds: x_{\Lambda_n}=y_{\Lambda_n}\big\}$ is included in $\gds \backslash X$.
We also let $\mathcal{F}$ be the collection of patterns defined by $\mathcal{F}=\{\omega(x) : x \in \gds \backslash X\}$. 
We claim that $X = \mathsf{X}_{\mathcal{F}}$.
Indeed, note that the point $x$ does not belong to $X$ if and only if there is a finite subset $\Lambda$ of $\g$ such that $x_\Lambda$ belongs to $\mathcal{F}$. 
%Then, the point $x$ belongs to $X$ if and only if $x_\Lambda$ does not belong to $\mathcal{F}$ for every finite subset $\Lambda$ of $\g$.
This fact together with the translation invariance of $X$ implies that
\begin{eqnarray*}
x \in X &\iff& \sigma^{j}(x) \in X \;\, \text{for all}\; j \in \g \\
&\iff& \sigma^{j}(x)_\Lambda \notin \mathcal{F}\;\,\text{for all}\; j \in \g \; \text{and for every}\; \Lambda \subseteq \g\; \text{finite}.
\end{eqnarray*}
Thus, we conclude that $X = \mathsf{X}_{\mathcal{F}}$.

On the other hand, let $\mathcal{F}$ be a collection of patterns such that $X=\mathsf{X}_{\mathcal{F}}$.
First, let us prove that $X$ is topologically closed. If $(x^{(n)})_{n \in \mathbb{N}}$ is a sequence in $X$ converging to an element $x$ of $\gds$, then
given a point $j$ in $\g$ and a finite  subset $\Lambda$ of $\g$ the continuity of $\sigma^{j}$ implies
on the existence of a positive integer $n$ such that $\sigma^{j}(x)_\Lambda = \sigma^{j}(x^{(n)})_\Lambda$, thus $\sigma^{j}(x)_\Lambda$ does not belongs to $\mathcal{F}$. 
It follows that $x$ belongs to $X$. Now, in order to prove that $X$ is translation invariant, it is sufficient to show that $X \subseteq \sigma^{-j}(X)$ for every $j$ in $\g$. 
Given an element $x$ of $X$, the pattern $\sigma^{i}(\sigma^{j}(x))_\Lambda = \sigma^{i+j}(x)_{\Lambda}$ does not belongs to $\mathcal{F}$ for each point $i$ in $\g$ and each finite subset $\Lambda$ of $\g$, hence
$\sigma^{j}(x)$ belongs to $X$.
\end{proof}

\begin{ex}[Even shift]\label{es}
Let $\g$ be the set of all integers and let $\mathcal{A}$ be the alphabet $\{0,1\}$. For each positive integer $n$, let us define a pattern $\omega^{(n)} : \Lambda_{n+1} \rightarrow \{0,1\}$
by letting
\begin{equation}
\omega^{(n)}_{i} = 
\begin{cases}
0 &\text{if}\; |i| < n, \\
1 &\text{if}\; |i| = n.
\end{cases}
\end{equation}
If we let $\mathcal{F}$ be a collection of patterns given by $\mathcal{F} = \{\omega^{(n)} : n \in \mathbb{N}\}$, 
let us define the even shift as the subshift of $\{0,1\}^{\mathbb{Z}}$ given by $\mathsf{X}_{\mathcal{F}}$. 
One can easily verify that the even shift is the set of all bi-infinite binary sequences so that there are
an even number of $0$'s between any two $1$'s.
\end{ex}

\begin{mydef}\label{defsft}
A subshift $X$ of $\gds$ is called a subshift of finite type (or SFT for short) if it can be written in the form $X=\mathsf{X}_\mathcal{F}$ for some finite set $\mathcal{F}$ of patterns.
\end{mydef}

\begin{remark}\label{remarksft}
Observe that if $X$ is a subshift of finite type, then it can be assumed that
all patterns in $\mathcal{F}$ are defined on the same (finite) set $\Lambda$. Indeed, let $\Lambda$ be
a finite subset of $\g$ containing the domain of all patterns in $\mathcal{F}$. Then, let us define  $\widetilde{\mathcal{F}}$ as the 
(finite) set of patterns
\[\widetilde{\mathcal{F}} = \left\{\eta \in \mathcal{A}^\Lambda : \eta\restriction_{\text{dom}\, \omega} \;= \omega\;\,\text{for some}\;\omega \in \mathcal{F}\right\}.\]
It is straightforward to prove that 
$X_{\widetilde{\mathcal{F}}} = X_{\mathcal{F}}$.
\end{remark}

\begin{ex}
The full shift $X = \mathcal{A}^{\g}$ is a SFT, since $X$ can be written in the form $X = \mathsf{X}_{\mathcal{F}}$ by choosing $\mathcal{F} = \emptyset$.
\end{ex}

\begin{ex}[Golden mean shift]
Let $\omega : \{0,1\} \rightarrow \{0,1\}$ given by $\omega_{i} = 1$ for each point $i \in \{0,1\}$,
and let $\mathcal{F} = \{\omega\}$. The golden mean shift is the subshift of $\{0,1\}^{\mathbb{Z}}$ defined by $\mathsf{X}_\mathcal{F}$.
Clearly, the golden mean shift is a SFT that consists of all bi-infinite binary sequences such that there is no two consecutive $1$'s.
\end{ex}

\begin{ex}
The even shift defined in Example $\ref{es}$ is not a SFT. Indeed, let us suppose that exists a finite set $\mathcal{F}$ of patterns such that the
even shift can be written in the form
$\mathsf{X}_{\mathcal{F}}$. Without loss of generality, we can assume that all patterns in $\mathcal{F}$ are defined on
$\Lambda_{n} = \{-(n-1), \dots, 0, \dots, n-1\}$ for some positive integer $n$. It follows that the element $x$ of $\{0,1\}^{\mathbb{Z}}$ defined by
\begin{equation}
x_i = 
\begin{cases}
0 &\text{if}\; |i| \neq n,\\
1 &\text{if}\; |i| = n;
\end{cases}
\end{equation}
belongs to the even shift, a contradiction.
\end{ex}

Now, we will present an equivalent definition of subshifts of finite type frequently presented in other texts.
Using Remark \ref{remarksft}, it is easy to show that a SFT can also be characterized in terms of a finite number of allowed patterns instead of a finite number of forbidden patterns.   

\begin{prop}\label{csft}
A subset $X$ of $\gds$ is a subshift of finite type if and only if there is a finite subset $\Lambda$ of $\g$ and a set $P$ of patterns on $\Lambda$ such that
\begin{equation}
X  = \left\{x \in \gds :\sigma^{j}(x)_\Lambda \in P \;\, \textnormal{for all}\; j \in \g\right\}. 
\end{equation}
\end{prop}

\begin{proof}
Due to Remark \ref{remarksft}, $X$ is a subshift of finite type if and only if there exists a finite subset $\Lambda$ of $\g$ and a collection $\mathcal{F}$ of patterns 
on $\Lambda$ such that 
\begin{equation*}
X = \mathsf{X}_{\mathcal{F}} = \left\{x \in \gds : \sigma^{j}(x)_{\Lambda} \notin \mathcal{F}\;\,\textnormal{for all}\; j \in \g\right\}. 
\end{equation*}
Therefore, the result follows.
\end{proof}

\begin{ex}[Matrix subshift]
Let $A_1, \dots, A_d \in {\{0,1\}}^{\mathcal{A}\times\mathcal{A}}$ be matrices of $0$'s and $1$'s indexed by $\mathcal{A}\times\mathcal{A}$. In the literature, these
matrices are often referred to as transition matrices. 
If we define
\begin{equation}
\Sigma_{A_1, \dots,A_d} \defeq \left\{x \in \gds : A_n(x_i,x_{i+\textbf{e}_n}) = 1\,\, \textnormal{for all}\;i \in \g\; \textnormal{and for each}\; n \in \{1, \dots, d\} \right\}
\end{equation}
where each $\textbf{e}_n$ is the element of $\g$ defined by $\textbf{e}_n = (0,\dots,\tikzmark{b}1,\dots, 0)$, then $\Sigma_{A_1, \dots,A_d}$ is a SFT called matrix subshift.

\begin{tikzpicture}[remember picture,overlay]
\draw[<-] 
  ([shift={(2pt,-2pt)}]pic cs:b) |- ([shift={(14pt,-10pt)}]pic cs:b) 
  node[anchor=west] {$\scriptstyle n\text{-th position}$}; 
\end{tikzpicture}

Let us show that $\Sigma_{A_1, \dots,A_d}$ is indeed a SFT. If we let $\Lambda = \{\textbf{0}, \textbf{e}_1, \dots, \textbf{e}_d\}$ and define
\[P = \left\{\omega \in \mathcal{A}^\Lambda : A_n\left(\omega_{\textbf{0}}, \omega_{\textbf{e}_n}\right) = 1\;\, \textnormal{for all}\;n \in \{1, \dots, d\}\right\},\]
then
\begin{eqnarray*}
\Sigma_{A_1, \dots,A_d} &=& \left\{x \in \gds : A_n(x_i,x_{i+\textbf{e}_n}) = 1\,\, \textnormal{for all}\;i \in \g\; \textnormal{and for each}\; n \in \{1, \dots, d\} \right\} \\
&=& \left\{x \in \gds : A_n\left(\sigma^i(x)_{\textbf{0}},\sigma^i(x)_{\textbf{e}_n}\right) = 1\;\,\textnormal{for all}\; i \in \g\; \textnormal{and for each}\; n \in \{1, \dots, d\} \right\} \\
&=& \left\{x \in \gds : \sigma^i(x)_{\Lambda} \in P\;\, \textnormal{for all}\; i \in \g\right\}.
\end{eqnarray*}
Thus, using Proposition \ref{csft}, we conclude that $\Sigma_{A_1, \dots,A_d}$ is a subshift of finite type.
\end{ex}

\chapter{Thermodynamic formalism}
\label{cap:fm}

In Chapter \ref{cap:gibbs} we will see how the study of Gibbs and equilibrium measures on subshifts are connected among themselves,
but, in order to do so, we will dedicate this chapter to provide the basic ideas of thermodynamic formalism. The reader who is
familiar with this subject can skip this chapter and proceed directly to the next one.

The first two sections are devoted to the study of entropy. We start by introducing and deriving the basic properties concerning the entropy of partitions, and then we use 
these notions to study the entropy of dynamical systems. Finally, in the last section will be introduced the definition of an equilibrium measure. 

\section{Entropy of partitions}
We start this section by introducing the definition of a partition of a probability space $(X,\mathcal{B},\mu)$, called a $\mu$-partition.

\begin{mydef}
Let $(X,\mathcal{B},\mu)$ be a probability space. A $\mu$-partition of $X$ is a countable (finite or countably infinite) collection
$\alpha$ of measurable subsets of $X$ such that
\begin{itemize}
\item[(a)] $\mu\left(\bigcup \alpha\right) = 1$, and

\item[(b)] $\mu(A \cap B) = 0$ whenever $A$ and $B$ are distinct elements of $\alpha$.
\end{itemize}
\end{mydef}

\begin{remark}\label{rmup}
Observe that for $\mu$-almost every $x$ in $X$ there is a unique element $A$ of $\alpha$ which contains $x$. Indeed, since the set $N = \bigcup \{A_1 \cap A_2 : A_1, A_2 \in \alpha\;\text{and}\; A_1 \neq A_2\}$
 is a countable union of $\mu$-null sets, it follows that $\mu(N) = 0$. Therefore, $\left(\bigcup \alpha\right)\backslash N$ is a set with measure 1 whose points satisfy the 
required propety. 
\end{remark}

In the case where $\alpha$ is a countable collection of measurable subsets of $X$ such that its elements are pairwise disjoint and
$\bigcup \alpha = X$, the collection $\alpha$ will be referred to as a partition of $X$. Notice that every partition is also a $\mu$-partition.

The following example will provide us a method for generating a $\mu$-partition from other two.
\begin{ex}\label{cr}
Given two $\mu$-partitions of $X$, say $\alpha$ and $\beta$, we define their common refinement by
\begin{equation}
\alpha \vee \beta \defeq \{A \cap B : A \in \alpha, B \in \beta\}.	
\end{equation}
Let us show that the collection $\alpha \vee \beta$ is a $\mu$-partition of $X$.
It is easy to check that $\alpha \vee \beta$ is a countable collection of measurable subsets of $X$.
The identity $\mu\left(\bigcup \alpha \vee \beta\right) = 1$, follows from the fact that $\bigcup \alpha \vee \beta = 
\left(\bigcup \alpha\right) \cap \left(\bigcup \beta\right)$.
Now, given two distinct elements $U_1$ and $U_2$ in $\alpha \vee \beta$, there are 
$A_1, A_2 \in \alpha$ and $B_1, B_2 \in \beta$ such that $U_n = A_n \cap B_n$ for each $n \in \{1,2\}$. Since $U_1 \neq U_2$, it follows that either
$A_1 \neq A_2$ or $B_1 \neq B_2$. Therefore, we have $\mu(U_1 \cap U_2) = \mu\left((A_1 \cap A_2)\cap(B_1 \cap B_2)\right) = 0$.  
\end{ex}

In order to define the concept of entropy associated to a $\mu$-partition let us introduce a function which quantifies the
amount of information gained by an observer which observes the system through this partition. Given a $\mu$-partition $\alpha$, an information function of $\alpha$ is a
measurable function $I_{\alpha}: X \rightarrow \overline{\mathbb{R}}$ satisfying
\begin{equation}\label{info1}
I_{\alpha}(x) = \sum\limits_{A \in \alpha} - \log\mu(A)\cdot \rchi_A(x) 
\end{equation}
for $\mu$-almost every $x$ in $X$.  
\begin{remark}\label{cjncn}
\begin{enumerate}[label=(\alph*),ref=\alph*]
\item We need to emphasize that the expression on the right-hand side of (\ref{info1}) makes sense for $\mu$-almost every $x$ in $X$. 
Indeed, if 	we let $N_0 = \bigcup \{A \in \alpha : \mu(A)= 0\}$, then $\mu(N_0) = 0$ and the right-hand side of (\ref{info1}) makes sense for every point
$x$ in $X\backslash N_0$.
\item Clearly, any two information functions of $\alpha$ coincide $\mu$-almost everywhere. 
\item\label{item:c} Observe that always exists an information function of $\alpha$, 
for example, let us consider the function $\widetilde{I_{\alpha}}$ defined by
\begin{equation}
\widetilde{I_{\alpha}}(x) = 
\begin{cases}
\sum\limits_{A \in \alpha} - \log\mu(A)\cdot \rchi_A(x)& \text{if}\;\, x \in X\backslash{N_0}, \\
0 &\text{otherwise};
\end{cases}
\end{equation}
where $N_0 = \bigcup \{A \in \alpha : \mu(A)= 0\}$. It is easy to check that $\widetilde{I_{\alpha}}$ is a nonnegative measurable function on $X$, thus $\widetilde{I_{\alpha}}$ is an information function of $\alpha$.
Moreover, given an arbitrary information function $I_{\alpha}$
we have 
\begin{equation}
\int_{X} I_{\alpha} \,d\mu = \int_{X} \widetilde{I_{\alpha}} \,d\mu = \sum\limits_{A \in \alpha}-\mu(A) \cdot \log \mu(A). 
\end{equation}
\end{enumerate}
\end{remark}

In view of Remark \ref{cjncn}(\ref{item:c}), we will define the entropy of a $\mu$-partition as the average information cointained on it.

\begin{mydef}[Entropy of $\alpha$]
Let $(X,\mathcal{B},\mu)$ be a probability space. Given a $\mu$-partition $\alpha$, we define its entropy by
\begin{equation}
H_{\mu}(\alpha) \defeq \sum\limits_{A \in \alpha}-\mu(A) \cdot \log \mu(A).
\end{equation}
Note that $H_{\mu}(\alpha)$ is a nonnegative extended real number.
\end{mydef}

In the following we present more refined notions of information and entropy of $\mu$-partitions in order to describe the gain of information
in the situation where the observer already has previous knowledge about the system.

Before we introduce the conditional information, let us prove the following technical result.

\begin{lemma}\label{lemmadoido}
Let $(X,\mathcal{B},\mu)$ be a probability space, let $\EuScript{F}$ be a sub-$\sigma$-algebra of $\mathcal{B}$ and let $A$ be an element of $\mathcal{B}$. Then, 
we have $\mu(A|\EuScript{F})(x) \defeq \mathbb{E}_{\mu}[\rchi_A|\EuScript{F}](x) >0$ for $\mu$-almost every point $x$ in $A$. 
\end{lemma} 

\begin{proof}
Let us fix some version of $\mu(A|\EuScript{F})$. Since $\mu(A|\EuScript{F})\geq 0$ $\mu$-a.e., if we let 
$F_A$ be the element of $\EuScript{F}$ given by $\{x \in X : \mu(A|\EuScript{F})(x) = 0\}$ and show that that $A\cap F_A$ has measure zero, then
the proof will be complete.

Using the definition of conditional expectation, we have
\[\mu(A \cap F_A) = \int_{F_A}\rchi_A\, d\mu = \int_{F_A}\mu(A|\EuScript{F})\, d\mu = 0,\] 
concluding the proof.
\end{proof}

Let $\alpha$ be a $\mu$-partition of $X$ and let $\EuScript{F}$ be a sub-$\sigma$-algebra of $\mathcal{B}$. A conditional information 
function of $\alpha$ given $\EuScript{F}$ is a measurable function $I_{\alpha|\EuScript{F}} : X \rightarrow \overline{\mathbb{R}}$ satisfying
\begin{equation}\label{condinfo}
I_{\alpha|\EuScript{F}}(x) = \sum\limits_{A \in \alpha}-\log\mu(A|\EuScript{F})(x) \cdot \rchi_A(x) 
\end{equation} 
for $\mu$-almost every $x$ in $X$, for some (therefore, for any) version of each $\mu(A|\EuScript{F})$. 

\begin{remark}\label{xuleta}
\begin{enumerate}[label=(\alph*),ref=\alph*]
\item Note that the right-hand side of (\ref{condinfo}) is well defined for $\mu$-almost every point $x$ in $X$. Indeed,
for each element $A$ of $\alpha$ the set $N_A = \{x \in A : \mu(A|\EuScript{F})(x) \leq 0\}\cup\{x \in X : \mu(A|\EuScript{F})(x) > 1\}$ has measure zero. It follows that
$N_0 = \bigcup\limits_{A \in \alpha}N_A$ also has measure zero. Therefore, the right-hand side is well defined 
on $X\backslash N_0$.\label{danada}

\item On can easily verify that any two conditional information functions are equal $\mu$-almost everywhere.

\item Observe that always exists a conditional information function of $\alpha$ given $\EuScript{F}$, for example, let us fix a version of each $\mu(A|\EuScript{F})$ and consider the function $\widetilde{I_{\alpha|\EuScript{F}}}(x)$ given by
\begin{equation}
\widetilde{I_{\alpha|\EuScript{F}}}(x) =
\begin{cases}
\sum\limits_{A \in \alpha}-\log\mu(A|\EuScript{F})(x) \cdot \rchi_A(x) &\text{if}\;\, x \in X\backslash N_0, \\
0 &\text{otherwise;}
\end{cases}
\end{equation}
where $N_0$ is defined in the same way as we did on Remark \ref{xuleta}(\ref{danada}).
It is easy to check that $\widetilde{I_{\alpha|\EuScript{F}}}$ is a nonnegative measurable function on $X$, thus $\widetilde{I_{\alpha|\EuScript{F}}}$ is a conditional
information function of $\alpha$ given $\EuScript{F}$. 
%It follows that every conditional information function is nonnegative $\mu$-almost everywhere and the values of their integral are the same.
\end{enumerate}
\end{remark}

\begin{mydef}[Conditional entropy]
Let $(X,\mathcal{B},\mu)$ be a probability space, let $\alpha$ be a $\mu$-partition and let $\EuScript{F}$ be a sub-$\sigma$-algebra of $\mathcal{B}$.
The conditional entropy of $\alpha$ given $\EuScript{F}$ is defined by
\begin{equation}
H_\mu(\alpha|\EuScript{F}) \defeq \int_{X}I_{\alpha|\EuScript{F}}\, d\mu,
\end{equation}
where $I_{\alpha|\EuScript{F}}$ is an arbitrary information function of $\alpha$ given $\EuScript{F}$. Note that $H_\mu(\alpha|\EuScript{F})$ is a 
nonnegative extended real number and its value does not depends on the choice of $I_{\alpha|\EuScript{F}}$.
\end{mydef}

The following example show us that in the case where $\EuScript{F}$ is trivial mod $\mu$ (i.e., the measure of its elements is either $0$ or $1$), then
both notions of entropy coincide.

\begin{ex}\label{trivial}
If $\mu(F) \in \{0,1\}$ for every $F \in \EuScript{F}$, then we have $I_{\alpha|\EuScript{F}} = I_\alpha$ $\mu$-a.e. and $H_{\mu}(\alpha|\EuScript{F}) = H_{\mu}(\alpha)$.
Indeed, given an arbitrary element $A$ of $\alpha$, the equalities
\begin{eqnarray*}
\int_{F}\rchi_A\, d\mu &=& \mu(A \cap F) 
=\mu(A)\mu(F)
= \int_{F}\mu(A) d\mu
\end{eqnarray*}
hold for any $F$ in $\EuScript{F}$. Then, we have $\mu(A|\EuScript{F}) = \mu(A)$ $\mu$-almost everywhere for each $A \in \alpha$. We conclude that
\begin{eqnarray*}
I_{\alpha|\EuScript{F}}(x) &=& \sum\limits_{A \in \alpha}-\log \mu(A)\cdot \rchi_{A}(x) 
= I_{\alpha}(x)
\end{eqnarray*}
for $\mu$-almost every $x$ in $X$, thus $H_{\mu}(\alpha|\EuScript{F}) = H_{\mu}(\alpha)$.
\end{ex}

%\begin{fact}
%If $\mu(A|\EuScript{F}) = \rchi_A$\, $\mu$-a.e. for every $A \in \alpha$, then $I_{\alpha|\EuScript{F}} =0$\, $\mu$-a.e. and 
%$H_{\mu}(\alpha|\EuScript{F}) = 0$.
%\end{fact}
%\begin{proof}
%For $\mu$-almost every $x \in X$, we have
%\begin{eqnarray*}
%I_{\alpha|\EuScript{F}}(x) &=& \sum\limits_{A \in \alpha}-\log \rchi_{A}(x) \cdot \rchi_{A}(x) \\
%&=& 0.
%\end{eqnarray*}
%Therefore, $H_{\mu}(\alpha|\EuScript{F}) = 0$.
%\end{proof}

In the remainder of this section we prove a few elementary properties of entropy of partitions.

\begin{lemma}\label{lemmacond}
Let $\alpha$ be a $\mu$-partition of $X$, let $\EuScript{F}$ be a sub-$\sigma$-algebra of $\mathcal{B}$ and let $B$ be an element of $\mathcal{B}$. Then,
\begin{equation}\label{eqdoida}
\mu(B|\sigma(\alpha)\vee \EuScript{F}) = \sum\limits_{A \in \alpha}\frac{\mu(A\cap B|\EuScript{F})}{\mu(A|\EuScript{F})}\cdot \rchi_A \qquad \mu\text{-a.e.},
\end{equation}
where $\sigma(\alpha)\vee \EuScript{F}$ denotes the smallest $\sigma$-algebra which contains $\sigma(\alpha)\cup \EuScript{F}$.
\end{lemma}

\begin{proof}
We divide this proof into 3 steps.

Step 1. Let us define a $\sigma(\alpha)\vee \EuScript{F}$-measurable set $X_0$ with measure 1 such that for every point $x$ in $X_0$
the right-hand side of (\ref{eqdoida}) is well defined and there is a unique element $A$ of $\alpha$ which contains $x$. 
If we let
\[N_1 = \bigcup\limits_{A \in \alpha}\{x \in X: \mu(A\cap B| \EuScript{F})(x)<0\}\cup\{x\in A: \mu(A|\EuScript{F})(x) \leq 0\},\]
it is easy to check that $N_1$ is a  $\sigma(\alpha)\vee \EuScript{F}$-measurable set such that $\mu(N_1) = 0$. Moreover, if $x$ belongs to $X_1 = X\backslash N_1$, we have
\[\frac{\mu(A\cap B|\EuScript{F})(x)}{\mu(A|\EuScript{F})(x)}\cdot \rchi_A(x) \in {[}0,+\infty)\]
for each $A$ in $\alpha$. Therefore, the sum on the right-hand side of (\ref{eqdoida}) makes sense for every point $x$ in $X_1$.

Recall that we can find a set $X_2$ in $\sigma(\alpha)\vee \EuScript{F}$ with measure 1 such that
for every $x$ in $X_2$ there is a unique element $A$ of $\alpha$ which contains $x$ (see Remark \ref{rmup}). Therefore, our claim follows by letting $X_0 = X_1 \cap X_2$.

Step 2. Now, let us verify that the equation
\begin{equation}\label{eqsafada}
\int_{A'\cap F}\left(\sum\limits_{A \in \alpha}\frac{\mu(A\cap B|\EuScript{F})}{\mu(A|\EuScript{F})}\cdot \rchi_{X_0 \cap A}\right)d\mu 
=\int_{A'\cap F}\rchi_{B}\,d\mu
\end{equation}
holds for every $A' \in \alpha$ and $F \in \EuScript{F}$.

First, observe that the function 
\[\sum\limits_{A \in \alpha}\frac{\mu(A\cap B|\EuScript{F})}{\mu(A|\EuScript{F})}\cdot \rchi_{X_0 \cap A}\]
is $\sigma(\alpha)\vee \EuScript{F}$-measurable. Since
\begin{eqnarray*}
\int_{A'\cap F}\left(\sum\limits_{A \in \alpha}\frac{\mu(A\cap B|\EuScript{F})}{\mu(A|\EuScript{F})}\cdot \rchi_{X_0 \cap A}\right)d\mu 
&=& \int_{F}\frac{\mu(A'\cap B|\EuScript{F})}{\mu(A'|\EuScript{F})}\cdot \rchi_{X_0\cap A'}\,d\mu
\end{eqnarray*}
and
\[\frac{\mu(A'\cap B|\EuScript{F})}{\mu(A'|\EuScript{F})}\cdot \rchi_{X_0\cap A'} = \left(\frac{\mu(A'\cap B|\EuScript{F})}{\mu(A'|\EuScript{F})}\cdot \rchi_{\{x \in X: \mu(A'|\EuScript{F})(x)>0\}}\right)\rchi_{A'}\quad \mu\text{-a.e.},\]
it follows that
\begin{eqnarray*}
\int_{A'\cap F}\left(\sum\limits_{A \in \alpha}\frac{\mu(A\cap B|\EuScript{F})}{\mu(A|\EuScript{F})}\cdot \rchi_{X_0 \cap A}\right)d\mu 
&=&\int_{F}\underbrace{\left(\frac{\mu(A'\cap B|\EuScript{F})}{\mu(A'|\EuScript{F})}\cdot \rchi_{\{x \in X: \mu(A'|\EuScript{F})(x)>0\}}\right)}_{\EuScript{F}\text{-measurable function on}\;X}\rchi_{A'}\,d\mu \\
&=&\int_{F}\left(\frac{\mu(A'\cap B|\EuScript{F})}{\mu(A'|\EuScript{F})}\cdot \rchi_{\{x \in X: \mu(A'|\EuScript{F})(x)>0\}}\right)\mu(A'|\EuScript{F})\,d\mu \\
&=&\int_{F}\mu(A'\cap B|\EuScript{F}) \rchi_{\{x \in X: \mu(A'|\EuScript{F})(x)>0\}}\,d\mu.
\end{eqnarray*}
Using the fact that $0 \leq\mu(A'\cap B|\EuScript{F}) \leq  \mu(A'|\EuScript{F})$ $\mu$-a.e., we have $\mu(A'\cap B|\EuScript{F}) \rchi_{\{x \in X: \mu(A'|\EuScript{F})(x)>0\}} = \mu(A'\cap B|\EuScript{F})$ 
$\mu$-a.e. Therefore, we have
\begin{eqnarray*}
\int_{A'\cap F}\left(\sum\limits_{A \in \alpha}\frac{\mu(A\cap B|\EuScript{F})}{\mu(A|\EuScript{F})}\cdot \rchi_{X_0 \cap A}\right)d\mu
&=& \int_{F}\mu(A'\cap B|\EuScript{F})\,d\mu 
= \int_{F}\rchi_{A'\cap B}\,d\mu 
= \int_{A'\cap F}\rchi_{B}\,d\mu.
\end{eqnarray*}

Step 3. Let us consider the collection
\[\mathscr{C} \defeq \left\{\left(\bigcap \alpha'\right)\cap F : \alpha' \subseteq \alpha \;\text{and}\; F \in \EuScript{F}\right\},\]
where we adopt the usual convention that $\bigcap \emptyset = X$. It is easy to prove that $\mathscr{C}$ is a $\pi$-system on $X$ (i.e., 
a collection of subsets of $X$ closed under finite intersections)
which generates $\sigma(\alpha)\vee \EuScript{F}$. 

Then, if we define two measures $\nu_1$ and $\nu_2$ on $\sigma(\alpha)\vee \EuScript{F}$ by
\begin{equation}
\nu_1(C) = \int_{C}\left(\sum\limits_{A \in \alpha}\frac{\mu(A\cap B|\EuScript{F})}{\mu(A|\EuScript{F})}\cdot \rchi_{X_0 \cap A}\right)d\mu
\end{equation}
and
\begin{equation}
\nu_1(C) = \int_{C}\rchi_{B}\,d\mu
\end{equation}
for each $C \in \sigma(\mathscr{\alpha})\vee \EuScript{F}$, using equation (\ref{eqsafada}),
we easily verify that $\nu_1(C) = \nu_2(C)$ holds for all $C \in \mathscr{C}$. Since
$\nu_1$ and $\nu_2$ are finite measures that agree 
on a $\pi$-system which generates $\sigma(\mathscr{\alpha})\vee \EuScript{F}$ and satisfy $\nu_1(X) = \nu_2(X)$, we conclude that $\nu_1 = \nu_2$ (see Corolary $1.6.3$ from \cite{cohn:13}).
\end{proof}

\begin{teo}[Addition rule for information]\label{add} 
Let $\alpha$ and $\beta$ be $\mu$-partitions and let $\EuScript{F}$ be a sub-$\sigma$-algebra of $\mathcal{B}$.
Then, the equality
\begin{equation}\label{infoadd}
I_{\alpha \vee \beta|\EuScript{F}} = I_{\alpha|\EuScript{F}} + I_{\beta|\sigma(\alpha) \vee \EuScript{F}} 
\end{equation}
holds $\mu$-a.e.
\end{teo}
\begin{proof}
Observe that exists a set $X_0$ with measure $1$ such that for every $x$ in $X_0$ there are unique sets $A_0 \in \alpha$ 
and $B_0 \in \beta$ such that $x \in A_0 \cap B_0$. We
can assume that the equalities
\begin{eqnarray}
I_{\alpha \vee \beta|\EuScript{F}}(x) &=& \sum\limits_{C \in \alpha \vee \beta}-\log\mu(C|\EuScript{F})(x)\cdot\rchi_C(x), \\
I_{\alpha|\EuScript{F}}(x) &=& \sum\limits_{A \in \alpha}-\log\mu(A|\EuScript{F})(x)\cdot\rchi_A(x), 
\end{eqnarray} 
and
\begin{eqnarray}
I_{\beta|\sigma(\alpha)\vee \EuScript{F}}(x) &=& \sum\limits_{B \in \beta}-\log\mu(B|\sigma(\alpha)\vee \EuScript{F})(x)\cdot\rchi_B(x)
\end{eqnarray} 
hold for each point $x$ in $X_{0}$, and
\begin{eqnarray}
\mu(B|\sigma(\alpha)\vee \EuScript{F})(x) = \sum\limits_{A \in \alpha}\frac{\mu(A\cap B|\EuScript{F})}{\mu(A|\EuScript{F})}(x)\cdot \rchi_A(x) 
\end{eqnarray} 
also holds for every $x$ in $X_{0}$ for each $B \in \beta$.

Then, for all $x$ in $X_0$ we have the following equalities
\begin{equation*}
I_{\alpha \vee \beta|\EuScript{F}}(x) = -\log\mu(A_0 \cap B_0|\EuScript{F})(x),
\end{equation*}
\begin{equation*}
I_{\alpha|\EuScript{F}}(x) = -\log\mu(A_0|\EuScript{F})(x),
\end{equation*}
and
\begin{eqnarray*}
I_{\beta|\sigma(\alpha) \vee \EuScript{F}}(x) 
&=& -\log\mu(B_0|\sigma(\alpha)\vee \EuScript{F})(x) = - \log\frac{\mu(A_0 \cap B_0|\EuScript{F})}{\mu(A_0|\EuScript{F})}(x),
\end{eqnarray*}
where $A_{0}$ and $B_{0}$ are the unique elements of $\alpha$ and $\beta$, respectively, such that $x$ belongs to $A_{0}\cap B_{0}$. 
We conclude that the identity $I_{\alpha \vee \beta|\EuScript{F}}(x) =  I_{\alpha|\EuScript{F}}(x) + I_{\beta|\sigma(\alpha) \vee \EuScript{F}}(x)$ holds for
each $x$ in $X_{0}$.
\end{proof}

\begin{cor}\label{peppa}
Under the same hypotheses of Theorem \ref{add}, we have
\begin{enumerate}[label=(\alph*),ref=\alph*]
\item \label{epa} $H_{\mu}(\alpha \vee \beta|\EuScript{F}) = H_{\mu}(\alpha|\EuScript{F}) + H_{\mu}(\beta|\sigma(\alpha) \vee \EuScript{F})$,
\item $I_{\alpha \vee \beta} = I_{\alpha} + I_{\beta|\sigma(\alpha)}$ $\;\mu$-a.e.,
\item $H_{\mu}(\alpha \vee \beta) = H_{\mu}(\alpha) + H_{\mu}(\beta|\sigma(\alpha))$, and
\item $H_{\mu}(\alpha) \leq H_{\mu}(\beta) + H_{\mu}(\alpha|\sigma(\beta))$.
\end{enumerate}
\end{cor}
\begin{proof}
Part (a) follows by integrating equation (\ref{infoadd}). We obtain parts (b) and (c) by letting $\EuScript{F} =  \{\emptyset, X\}$ on
equation (\ref{infoadd}) and applying Example \ref{trivial}. Finally, part (d) follows from the fact that
$H_{\mu}(\alpha) \leq H_{\mu}(\alpha) + H_{\mu}(\beta|\sigma(\alpha)) = H_{\mu}(\alpha \vee \beta) = H_{\mu}(\beta \vee \alpha) = H_{\mu}(\beta) + H_{\mu}(\alpha|\sigma(\beta))$.
\end{proof}

Let $\alpha$ and $\beta$ be $\mu$-partitions. If each element $A$ of $\alpha$ is a union of
elements of $\beta$ (mod $\mu$) we will say that $\beta$ is finer than $\alpha$ and denote this fact by $\beta \succeq \alpha$. 
We will use $\alpha \approx \beta$ to denote the case where the conditions $\alpha \succeq \beta$ and $\beta \succeq \alpha$ hold.

\begin{prop}\label{xavasca}
Given two $\mu$-partitions, say $\alpha$ and $\beta$, the conditions
\begin{enumerate}[label=(\alph*),ref=\alph*]
\item $H_{\mu}(\alpha|\sigma(\beta)) = 0$,
\item $\beta \succeq \alpha$, and
\item $\alpha \vee \beta \approx \beta$
\end{enumerate}
are equivalent.
\end{prop}

\begin{proof}
First, let us find the explicit value of $H_{\mu}(\alpha|\sigma(\beta))$. If we apply Lemma \ref{lemmacond} in the particular case where $\EuScript{F} = \{\emptyset,X\}$,
it is easy to check that
\[I_{\alpha|\sigma(\beta)} = \sum\limits_{A \in \alpha}\sum\limits_{B \in \beta} -\log\frac{\mu(A\cap B)}{\mu(B)} \cdot \chi_{A\cap B} \quad\text{holds}\;\mu\text{-almost everywhere},\]
and then,
\[H_{\mu}(\alpha|\sigma(\beta)) = \sum\limits_{A \in \alpha}\sum\limits_{B \in \beta} -\log\frac{\mu(A\cap B)}{\mu(B)} \cdot \mu(A\cap B).\]
Therefore, we have $H_{\mu}(\alpha|\sigma(\beta)) = 0$ if and only if $\mu(A\cap B) = \mu(B)$ holds whenever the sets
$A \in \alpha$ and $B \in \beta$ satisfy $\mu(A \cap B) > 0$.

Let us show that (a) is equivalent to (b). Given an arbitrary element $A$ of $\alpha$, if we let $\widetilde{A} = \bigcup\{B \in \beta : \mu(A \cap B) > 0\}$, then we have
\begin{eqnarray*}
A \Delta \widetilde{A} &=& (A\backslash\widetilde{A}) \cup (\widetilde{A} \backslash A)\\
&=& A\backslash (A \cap \widetilde{A}) \cup (\widetilde{A} \backslash A)\\
&=& \left(A\Big\backslash \bigcup\limits_{\substack{B \in \beta \\ \mu(A \cap B)>0}}A\cap B\right) \cup \left(\bigcup\limits_{\substack{B \in \beta \\ \mu(A \cap B)>0}}B\backslash A\right)  \\
&=& \left(A\Big\backslash \bigcup\limits_{\substack{B \in \beta \\ \mu(A \cap B)>0}}A\cap B\right) \cup \underbrace{\left(\bigcup\limits_{\substack{B \in \beta \\ \mu(A \cap B)>0}}B\backslash(A\cap B)\right)}_{\text{measure zero}}  
\end{eqnarray*} 
and
\begin{eqnarray*}
\mu\left(A\Big\backslash \bigcup\limits_{\substack{B \in \beta \\ \mu(A \cap B)>0}}A\cap B\right) &=& \mu(A) - \mu \left(\bigcup\limits_{\substack{B \in \beta \\ \mu(A \cap B)>0}}A\cap B\right) \\
&=& \mu(A) - \mu \left(\bigcup\limits_{B \in \beta}A\cap B\right) \\
&=&  \mu(A) - \mu \left(A \cap \left(\bigcup \beta\right)\right) \\
&=& 0.
\end{eqnarray*} 
Thus, we conclude that $A = \widetilde{A}$ (mod $\mu$).
On the other hand, given $A \in \alpha$ and $B \in \beta$ satisfying $\mu(A\cap B) > 0$, let $\beta'$ be a collection of subsets of $\beta$ such that
$A = \bigcup \beta'$ (mod $\mu$). Since $\mu(A \cap B) = \mu\left(\left(\bigcup \beta'\right) \cap B\right) = \mu\left(\bigcup\limits_{B' \in \beta'} B' \cap B\right) > 0$,
it follows that $\bigcup\limits_{B' \in \beta'} B' \cap B = B$. Therefore, we have $\mu(A \cap B) = \mu(B)$.{}

Now let us show that (b) is equivalent to (c). 
%let us show that $H_{\mu}(\alpha \vee \beta| \sigma(\beta)) = 0$ and $H_{\mu}(\beta|\sigma(\alpha \vee \beta)) = 0$.
For every $C \in \alpha \vee \beta$ and $B \in \beta$, the condition $\mu(C \cap B) > 0$ implies that 
$C = A \cap B$ for some element $A$ of $\alpha$. Using the fact that $\beta \succeq \alpha$ we obtain $\mu(C \cap B) = \mu(C) = \mu(B)$, hence
$\alpha \vee \beta \approx \beta$. Conversely, given $A \in \alpha$ and $B \in \beta$, the condition $\mu(A \cap B) > 0$ implies that $A \cap B$ and $B$ are respectively elements of
$\alpha \vee \beta$ and $\beta$ such that $\mu((A \cap B) \cap B) >0$. Thus $\mu(A \cap B) = \mu(B)$.
\end{proof}

\begin{cor}\label{info2}
Let $\alpha$ and $\beta$ be $\mu$-partitions such that $\alpha \approx \beta$ and let $\EuScript{F}$ be a sub-$\sigma$-algebra of $\mathcal{B}$. 
Then, we have $I_{\alpha|\EuScript{F}} = I_{\beta|\EuScript{F}}$ $\mu$-a.e. and $H_{\mu}{(\alpha|\EuScript{F})} = H_{\mu}{(\beta|\EuScript{F})}$.
\end{cor}
\begin{proof}
It is easy to check that the equalities
\begin{equation*}
I_{\alpha|\EuScript{F}} = \sum\limits_{A \in \alpha}\sum\limits_{B \in \beta} -\log\mu(A|\EuScript{F}) \cdot \rchi_{A\cap B}
\end{equation*}
and
\begin{equation*}
I_{\beta|\EuScript{F}} = \sum\limits_{A \in \alpha}\sum\limits_{B \in \beta} -\log\mu(B|\EuScript{F}) \cdot \rchi_{A\cap B} 
\end{equation*}
hold $\mu$-almost everywhere. Suppose that $A$ and $B$ belong to $\alpha$ and $\beta$, respectively. Let us show that $-\log\mu(A|\EuScript{F}) \cdot \rchi_{A\cap B} = -\log\mu(B|\EuScript{F}) \cdot \rchi_{A\cap B}$ $\mu$-a.e.
Note that this result easily follows in the case where $\mu(A \cap B) = 0$.
On the other hand, if $\mu(A \cap B) > 0$, we can use the fact that $H_{\mu}(\alpha |\sigma(\beta)) = H_{\mu}(\beta |\sigma(\alpha)) =0$ to obtain
$\mu(A \cap B) = \mu(B) = \mu(A)$ and conclude that $\mu(A \Delta B) = 0$. It follows that $\mu(A|\EuScript{F}) = \mu(B|\EuScript{F})$ $\mu$-a.e., and then, the equality
$-\log\mu(A|\EuScript{F}) \cdot \rchi_{A\cap B} = -\log\mu(B|\EuScript{F}) \cdot \rchi_{A\cap B}$ holds $\mu$-almost everywhere.
We conclude that $I_{\alpha|\EuScript{F}} = I_{\beta|\EuScript{F}}$ $\mu$-a.e., and by integration, we have $H_{\mu}{(\alpha|\EuScript{F})} = H_{\mu}{(\beta|\EuScript{F})}$. 
\end{proof}

\begin{teo}[Monotonicity of conditional entropy]\label{xuxacapeta}
Let $\alpha$ and $\beta$ be $\mu$-partitions and let $\EuScript{F}, \EuScript{F}_1$, and $\EuScript{F}_2$ be sub-$\sigma$-algebras of $\mathcal{B}$.
Then the following statements hold.
\begin{enumerate}[label=(\alph*),ref=\alph*]
\item \label{xuxaxxxa} If $\EuScript{F}_2 \subseteq \EuScript{F}_1$, then $H_{\mu}(\alpha|\EuScript{F}_1) \leq H_{\mu}(\alpha|\EuScript{F}_2)$. In particular, we have $H_{\mu}(\alpha|\EuScript{F}) \leq H_{\mu}(\alpha)$. 

\item If $\beta \succeq \alpha$, then $H_{\mu}(\beta|\EuScript{F}) \geq H_{\mu}(\alpha|\EuScript{F})$. In particular, we have $H_{\mu}(\beta) \geq H_{\mu}(\alpha)$.

\item\label{xuxac} The inequality $H_{\mu}(\alpha \vee \beta | \EuScript{F}) \leq H_{\mu}(\alpha| \EuScript{F}) + H_{\mu}(\beta | \EuScript{F})$ holds. In particular, we have $H_{\mu}(\alpha\vee\beta) \leq H_{\mu}(\alpha) + H_{\mu}(\beta)$. 
\end{enumerate}
\end{teo}
\begin{proof}
In order to prove part (a), it is convenient to assume without loss that 
$0 \leq \mu(A|\EuScript{F}_i) \leq 1$ on $X$ for each $A \in \alpha$ and $i \in \{1,2\}$. Let us consider the convex function $\Phi : [0,+\infty) \rightarrow \mathbb{R}$ 
given by $\Phi(x) = x \log x$, where we adopt the usual convention that $0 \log 0 = 0$.
Using Jensen's inequality for conditional expectations, we obtain
\begin{equation}\label{mariola}
\Phi \circ \mu(A|\EuScript{F}_2) = \Phi \circ \mathbb{E}_{\mu}\left[\mu(A|\EuScript{F}_1)|\EuScript{F}_2\right] \leq \mathbb{E}_{\mu}\left[\Phi \circ \mu(A|\EuScript{F}_1)|\EuScript{F}_2\right] \quad \mu\text{-a.e.}
\end{equation}
for each $A \in \alpha$. Thus, we have
\begin{eqnarray*}
H_{\mu}(\alpha|\EuScript{F}_1) 
&=& \sum\limits_{A \in \alpha}\int_{X}\underbrace{\left(-\log\mu(A|\EuScript{F}_1)\cdot\rchi_{\{x \in X: \mu(A|\EuScript{F}_1)(x) > 0\}} \right)}_{\EuScript{F}_1\text{-measurable function on}\;X}\cdot \rchi_{A}\,d\mu \\
&=& \sum\limits_{A \in \alpha}\int_{X}\left(-\log\mu(A|\EuScript{F}_1)\cdot\rchi_{\{x \in X: \mu(A|\EuScript{F}_1)(x) > 0\}} \right)\cdot \mu(A|\EuScript{F}_1)\,d\mu \\
&=& \sum\limits_{A \in \alpha}\int_{X} -\Phi \circ \mu(A|\EuScript{F}_1)\,d\mu \\
&=& \sum\limits_{A \in \alpha}\int_{X} \mathbb{E}_{\mu}\left[-\Phi \circ \mu(A|\EuScript{F}_1)|\EuScript{F}_2\right]\,d\mu \\
&\leq& \sum\limits_{A \in \alpha}\int_{X} -\Phi \circ \mu(A|\EuScript{F}_2)\,d\mu \\
&=& H_{\mu}(\alpha|\EuScript{F}_2).
\end{eqnarray*}
Now, let us prove part (b). Using Corolary \ref{peppa}(\ref{epa}), we have
\begin{eqnarray}
H_{\mu}(\alpha|\EuScript{F}) \leq H_{\mu}(\alpha|\EuScript{F}) + H_{\mu}(\beta| \sigma(\alpha) \vee \EuScript{F}) = H_{\mu}(\alpha \vee \beta|\EuScript{F}).
\end{eqnarray}
Thus, the result follows by applying Proposition \ref{xavasca} and Corolary \ref{info2}. 
For part (c), note that if we use Corolary \ref{peppa}(\ref{epa}) and part (a) of this theorem, we obtain
\begin{eqnarray*}
H_{\mu}(\alpha \vee \beta | \EuScript{F}) &=& H_{\mu}(\alpha| \EuScript{F}) + H_{\mu}(\beta |\sigma(\alpha) \vee \EuScript{F}) \\
&\leq& H_{\mu}(\alpha| \EuScript{F}) + H_{\mu}(\beta |\EuScript{F}).
\end{eqnarray*}
\end{proof}

\begin{cor}
Let $\alpha, \beta$, and $\gamma$ be $\mu$-partitions of $X$. If $\beta \succeq \alpha$, then
\begin{equation}
H_{\mu}(\gamma|\sigma(\beta)) \leq H_{\mu}(\gamma|\sigma(\alpha)).	
\end{equation}	
\end{cor}
\begin{proof}
Using Corolary \ref{peppa}(\ref{epa}), Proposition \ref{xavasca}, and Theorem \ref{xuxacapeta}(\ref{xuxaxxxa}), we obtain
\begin{eqnarray*}
H_{\mu}(\gamma|\sigma(\beta)) \leq H_{\mu}(\alpha \vee \gamma|\sigma(\beta)) = \underbrace{H_{\mu}(\alpha|\sigma(\beta))}_{= 0} + H_{\mu}(\gamma|\sigma(\alpha) \vee \sigma(\beta)) \leq{}
H_{\mu}(\gamma|\sigma(\alpha)).  	
\end{eqnarray*}	
\end{proof}

\section{Entropy of dynamical Systems}
In this section we introduce the concept of entropy for a special kind of dynamical system, the so-called measure preserving dynamical systems. 

\begin{mydef}
A measure preserving dynamical system (m.p.d.s.) is a quadruple $(X,\mathcal{B},\mu,T)$, where
\begin{itemize}
\item[(a)] the triple $(X,\mathcal{B},\mu)$ is a probability space, and
\item[(b)] $T$ is a map that associates to each point $i$ in $\g$ a $\mathcal{B}$-measurable function $T^{i}:X \rightarrow X$ such that $T^{i}_{\ast}\mu = \mu$, 
and satisfies the identities
\begin{equation}\label{chinelo}
T^{\textbf{0}} = \mathsf{id}_{X},
\end{equation}
where $\mathsf{id}_{X}$ is the identity mapping of $X$, and
\begin{equation}\label{chinela}
T^{i+j} = T^{i}\circ T^{j}
\end{equation}
for every $i$ and $j$ in $\g$.
In other words, $T$ is a $\mathcal{B}$-measurable action of the group $\g = \zd$ or of the monoid $\g = \zdp$ on $X$ which
preserves the measure $\mu$.
\end{itemize}
\end{mydef} 

\begin{ex}[Bernoulli shifts]\label{bernoulli}
Let $X$ be the $\g$-full shift over the alphabet $\mathcal{A}$, and let $\mathcal{B}$ be the Borel $\sigma$-algebra of $\mathcal{A}^{\g}$.
%(note that $T$ is a $\mathcal{B}$-measurable action of  $\g$ on $X$). 
Given a probability measure $\nu$ on the measurable space $(\mathcal{A},\EuScript{E})$, where $\EuScript{E}$ is the power set of $\mathcal{A}$, 
let us denote the product measure $\nu^{\g}$ by $\mu$ 
(recall that $\mu$ is defined on the product $\sigma$-algebra $\EuScript{E}^{\g}$, which coincides with $\mathcal{B}$). Thus, condition (a) is satisfied.

Now, let $T$ be the map that associates to each point $i$ in $\g$ the translation $\sigma^{i}$ by $i$. It remains to prove that each map $T^{i}$ leaves the measure $\mu$ invariant. 
In order to do that, let us find a $\pi$-system $\mathscr{C}$ on $X$ which generates the $\sigma$-algebra
$\mathcal{B}$ such that $T^{i}_{\ast}\mu(C) = \mu(C)$ holds for each $i$ in $\g$ and each $C$ in $\mathscr{C}$, and finally conclude that the
identity $T^{i}_{\ast}\mu = \mu$ holds for each $i$ in $\g$.
Let $\mathscr{C}_{0} = \{\emptyset\}$. For each positive integer $n$, let us define a collection $\mathscr{C}_{n}$ of cylinder sets by letting $\mathscr{C}_{n} = \{[\omega] : \omega \in \mathcal{A}^{\Lambda_{n}}\}$,
where for each $\omega$ in $\mathcal{A}^{\Lambda_{n}}$ the cylinder $[\omega]$ is defined by $[\omega] = \{x \in \gds : x_{\Lambda_{n}} = \omega\}$. 
It is easy to check that the collection $\mathscr{C} = \bigcup\limits_{n \geq 0}\mathscr{C}_{n}$ satisfies the required properties.
Thus, the quadruple $(X,\mathcal{B},\mu,T)$ is a m.p.d.s.
\end{ex}

In the remainder of this section we will always consider a fixed measure preserving dynamical system $(X,\mathcal{B},\mu,T)$.{}

As previously mentioned, our main objective in this section is to formulate the concept of entropy of a m.p.d.s. Note that we need to define this quantity in such a way 
that it represents the gain of information about the system taking into account 
the fact that a dynamic was introduced on it. In order to do so, we will use the entropy of the partitions given as follows.

\begin{lemma}
Let $\alpha$ be a $\mu$-partition (resp. partition) of $X$. 
\begin{itemize}
\item[(a)] For each point $i$ in $\g$, the collection 
\begin{equation}
T^{-i}\alpha \defeq \left\{T^{-i}(A) : A \in \alpha\right\}, 
\end{equation}
where $T^{-i}(A)$ denotes the preimage of $A$ under $T^{i}$, is a $\mu$-partition (resp. partition) of $X$.	
\item[(b)] Given a nonempty finite subset $\Lambda$ of $\g$, the collection
\begin{equation}
\bigvee\limits_{i \in \Lambda} T^{-i}\alpha \defeq \left\{\bigcap\limits_{i \in \Lambda}A_{i} : A_i \in T^{-i}\alpha\;\,\text{for each}\;i \in \Lambda	\right\}	
\end{equation}
is also a $\mu$-partition (resp. partition) of $X$. We will often denote $\bigvee\limits_{i \in \Lambda} T^{-i}\alpha$ by $\alpha^{\Lambda}$.
\end{itemize}
\end{lemma}

\begin{proof}
It is easy to verify that $T^{-i}\alpha$ is a countable collection of elements of $\mathcal{B}$. Note that
\begin{equation}\label{aa1}
\bigcup T^{-i}\alpha = \bigcup\limits_{A \in \alpha} T^{-i}(A) = T^{-i}\left(\bigcup \alpha\right),
\end{equation}
%then
%\[\mu\left(\bigcup T^{-i}\alpha\right) = \mu\left(T^{-i}\left(\bigcup \alpha\right)\right) = \mu\left(\bigcup \alpha\right) = 1.\]
and for each pair $B_1, B_2$ of distinct elements of $T^{-i}\alpha$ there are distinct sets $A_1$ and $A_2$ in $\alpha$ such that 
\begin{equation}\label{aa2}
B_1 \cap B_2 = T^{-i}(A_1) \cap T^{-i}(A_2) = T^{-i}(A_1 \cap A_2).
\end{equation}
Thus, part (a) follows from equations (\ref{aa1}) and (\ref{aa2}).

It is easy to check that $\alpha^{\Lambda}$ is a countable collection of elements of $\mathcal{B}$. Observe that
\begin{equation}\label{aa3}
\bigcup \alpha^{\Lambda} = \bigcap\limits_{i \in \Lambda}\left(\bigcup T^{-i}\alpha\right),
\end{equation}
and for any two distinct sets $A$ and $B$ in $\alpha^{\Lambda}$, one can find an element $j$ of $\Lambda$ together with
distinct sets $A_{j}$ and $B_j$ in $T^{-j}\alpha$ such that
\begin{equation}\label{aa4}
A \cap B \subseteq A_j \cap B_j.
\end{equation}	
Thus, part (b) follows from equations (\ref{aa3}) and (\ref{aa4}).
\end{proof}

\begin{remark}
It is easy to check that if we let $\Lambda$ and $\Delta$ be nonempty finite subsets of $\g$ such that $\Lambda \subseteq \Delta$, it follows that
$\alpha^{\Delta} \succeq \alpha^{\Lambda}$.	In particular, for each point $i$ in $\Delta$, if we let $\Lambda = \{i\}$, then we have $\alpha^{\Delta} \succeq T^{-i}\alpha$. 
\end{remark}

Now, let us derive a few properties related to the entropy of the partitions defined above.

\begin{lemma}\label{xsatan}
Let $\alpha$ be a $\mu$-partition and let $\EuScript{F}$ be a sub-$\sigma$-algebra of $\mathcal{B}$. 
\begin{enumerate}[label=(\alph*),ref=\alph*]
\item\label{xuxaaa} For each point $i$ in $\g$, if we let $T^{-i}\EuScript{F}$ be the sub-$\sigma$-algebra of $\mathcal{B}$ given by $T^{-i}\EuScript{F} = \left\{T^{-i}(F) : F \in \EuScript{F}\right\}$, then the equality	
\begin{equation}\label{uga}
I_{T^{-i}\alpha|T^{-i}\EuScript{F}} = I_{\alpha|\EuScript{F}}\circ T^{i}
\end{equation}
holds $\mu$-a.e., and
\begin{equation}\label{ugauga}
H_{\mu}(T^{-i}\alpha|T^{-i}\EuScript{F}) = H_{\mu}(\alpha|\EuScript{F}).
\end{equation}
In particular, we have $I_{T^{-i}\alpha} = I_{\alpha}\circ T^{i}$ $\mu$-a.e. and $H_{\mu}(T^{-i}\alpha) = H_{\mu}(\alpha)$.{}

\item\label{pipa} If $\Lambda$ is a nonempty finite subset of $\g$, then
\begin{equation}\label{xuxasatanasa}
H_{\mu}(\alpha^{\Lambda}|\EuScript{F}) \leq \sum\limits_{i \in \Lambda} H_{\mu}(T^{-i}\alpha|\EuScript{F}). 
\end{equation}
In particular, we have
\begin{equation}\label{xuxasatanas}
H_{\mu}(\alpha^{\Lambda}) \leq |\Lambda| \cdot H_{\mu}(\alpha). 
\end{equation}
\end{enumerate}
\end{lemma}

\begin{proof}
Let us prove part (a). Observe that for each $A \in \alpha$ and each $F \in \EuScript{F}$ we have
\begin{equation*}
\int_{F}\mu(A|\EuScript{F})\,d\mu = \int_{F}\mu(A|\EuScript{F})\,d (T^{i}_{\ast}\mu) = \int_{T^{-i}(F)}\mu(A|\EuScript{F})\circ T^{i}\,d\mu,	
\end{equation*}	
on the other hand, we also have
\begin{equation*}
\int_{F}\mu(A|\EuScript{F})\,d\mu = \int_{F}\rchi_{A}\,d\mu = \mu(A\cap F) = \mu(T^{-i}(A)\cap T^{-i}(F)) = \int_{T^{-i}(F)}\rchi_{T^{-i}(A)}\,d\mu.	
\end{equation*}	
Then, for each element $A$ of $\alpha$ the equation
\begin{equation}
\int_{T^{-i}(F)}\rchi_{T^{-i}(A)}\,d\mu = \int_{T^{-i}(F)}\mu(A|\EuScript{F})\circ T^{i}\,d\mu	
\end{equation} 
holds for every $F \in \EuScript{F}$, moreover, $\mu(A|\EuScript{F})\circ T^{i}$ is a measurable function with respect to the $\sigma$-algebra $T^{-i}\EuScript{F}$. 
By the definition of conditional expectation, it follows that $\mu(T^{-i}(A)|T^{-i}\EuScript{F}) = \mu(A|\EuScript{F})\circ T^{i}$ $\mu$-a.e.

Thus, the equalities
\begin{eqnarray*}
I_{T^{-i}\alpha|T^{-i}\EuScript{F}} &=& \sum\limits_{B \in \,T^{-i}\alpha} -\log \mu(B|T^{-i}\EuScript{F}) \cdot \rchi_{B}\\
&=& \sum\limits_{A \in \alpha} -\log \mu(T^{-i}(A)|T^{-i}\EuScript{F}) \cdot \rchi_{T^{-i}(A)} \\	
&=& \sum\limits_{A \in \alpha} -\log \mu(A|\EuScript{F})\circ T^{i} \cdot \rchi_{A}\circ T^{i} \\
&=& I_{\alpha|\EuScript{F}} \circ T^{i}
\end{eqnarray*}
hold $\mu$-almost everywhere. We obtain equation (\ref{ugauga}) by integrating equation (\ref{uga}) and using the fact that $T^{i}$ leaves the probability measure $\mu$ invariant.

Now, let us prove part (b). 
In the case where $\Lambda$ contains exactly one element, say $j$, it follows that $H_{\mu}(\alpha^\Lambda|\EuScript{F}) = H_{\mu}(T^{-j}\alpha|\EuScript{F})$. Let us suppose that
equation (\ref{xuxasatanasa}) holds whenever $\Lambda$ has $n$ elements. Now, if $\Lambda$ contains $n+1$ elements, choose an arbitrary element $j$ of $\Lambda$, and use 
Theorem \ref{xuxacapeta}(\ref{xuxac}) to obtain
\begin{eqnarray*}
H_{\mu}(\alpha^{\Lambda}|\EuScript{F}) &=& H_{\mu}(\alpha^{\Lambda\backslash \{j\}} \vee \alpha^{\{j\}}|\EuScript{F}) \leq H_{\mu}(\alpha^{\Lambda\backslash \{j\}}|\EuScript{F}) + H_{\mu}(\alpha^{\{j\}}|\EuScript{F}) \\
&\leq& \sum\limits_{i \in \Lambda\backslash\{j\}} H_{\mu}(T^{-i}\alpha|\EuScript{F}) + H_{\mu}(T^{-j}\alpha|\EuScript{F}) \\
&=& \sum\limits_{i \in \Lambda} H_{\mu}(T^{-i}\alpha|\EuScript{F}).   	
\end{eqnarray*}
Equation (\ref{xuxasatanas}) can be proved by letting $\EuScript{F} = \{\emptyset,X\}$ and applying the result obtained in part (a). 
\end{proof}

In the following, we will use the results obtained above to introduce the dynamical entropy of the system 
$(X,\mathcal{B},\mu,T)$ relative to a $\mu$-partition. Later, its entropy will be defined as the supremum of the set consisting of all
dynamical entropies relative to finite partitions of $X$.

\begin{teo}[Dynamical entropy relative to a $\mu$-partition]
Let $(X,\mathcal{B},\mu,T)$ be a m.p.d.s. and let $\alpha$ be a $\mu$-partition with $H_{\mu}(\alpha) < +\infty$. Then, we have the equality
\begin{equation}\label{dyentropy}
\inf\limits_{n \in \mathbb{N}}\frac{1}{|\Lambda_{n}|}H_{\mu}(\alpha^{\Lambda_{n}}) = \lim\limits_{n \to \infty}\frac{1}{|\Lambda_{n}|}H_{\mu}(\alpha^{\Lambda_{n}}).	
\end{equation}
The quantity defined above is a nonnegative real number that will be denoted by $h_{\mu}(T,\alpha)$, and is often called the dynamical entropy of the system $(X,\mathcal{B},\mu,T)$ relative to the $\mu$-partition $\alpha$.
\end{teo}

Before entering into the proof of this theorem observe that under the hypotheses presented above, Lemma \ref{xsatan}(\ref{pipa}) implies that for each positive integer $n$, the quantity given by
$\frac{1}{|\Lambda_{n}|}H_{\mu}(\alpha^{\Lambda_{n}})$ is a nonnegative real number, since it is less than $H_{\mu}(\alpha)$. Thus, the left-hand side of equation (\ref{dyentropy}) is also a nonnegative real number.
  
\begin{proof}
For each positive integer $m$, let $l_{m}$ be the side length of the cube $\Lambda_{m}$ (in case $\g = \zdp$ we have $l_{m} = m$, and, in case
$\g = \zd$ we have $l_{m} = 2{m} - 1$). Let us consider two positive integers $m$ and $n$. It is straightforward to show that $\g = \bigcup\limits_{j \in l_{m}\g}(\Lambda_{m}+j)$, where
$l_{m}\g = \{l_{m}\cdot i : i \in \g\}$.
If we let $V_{m,n} = \left\{j \in l_{m}\g : (\Lambda_{m}+j) \cap \Lambda_{n} \neq \emptyset\right\}$, then it follows from the inclusion $V_{m,n} \subseteq \Lambda_{m+n}$ that
$\Lambda_{n} \subseteq \widetilde{\Lambda}_{m} \defeq \bigcup\limits_{j \in V_{m,n}}(\Lambda_{m} + j) \subseteq \Lambda_{2m+n}$. Since $|\widetilde{\Lambda}_{m}| = \sum\limits_{j \in V_{m,n}}|\Lambda_{m} + j| = |V_{m,n}|\cdot|\Lambda_{m}| \leq |\Lambda_{2m+n}|$, we obtain
\begin{eqnarray*}
H_{\mu}(\alpha^{\Lambda_{n}}) &\leq& H_{\mu}(\alpha^{\widetilde{\Lambda}_{m}}) = H_{\mu}\left((\alpha^{{\Lambda}_{m}})^{V_{m,n}}\right) \leq |V_{m,n}| \cdot H_{\mu}(\alpha^{\Lambda_{m}}) \leq \frac{|\Lambda_{2m+n}|}{|\Lambda_{m}|} H_{\mu}(\alpha^{\Lambda_{m}}).
\end{eqnarray*}
Thus, the inequality
\begin{equation*}
\frac{1}{|\Lambda_{n}|}H_{\mu}(\alpha^{\Lambda_{n}}) \leq \frac{|\Lambda_{2m+n}|}{|\Lambda_{n}|}\cdot\frac{1}{|\Lambda_{m}|} H_{\mu}(\alpha^{\Lambda_{m}})	
\end{equation*}
holds for each $m$ and $n$. It follows that
\begin{equation*}
\limsup\limits_{n \to \infty} \frac{1}{|\Lambda_{n}|}H_{\mu}(\alpha^{\Lambda_{n}}) \leq \limsup\limits_{n \to \infty} \frac{|\Lambda_{2m+n}|}{|\Lambda_{n}|}\frac{1}{|\Lambda_{m}|} H_{\mu}(\alpha^{\Lambda_{m}}) =  \frac{1}{|\Lambda_{m}|} H_{\mu}(\alpha^{\Lambda_{m}})  	
\end{equation*}
holds for every positive integer $m$. Therefore, we have 
\begin{equation*}
\limsup\limits_{n \to \infty} \frac{1}{|\Lambda_{n}|}H_{\mu}(\alpha^{\Lambda_{n}}) \leq \inf\limits_{m \in \mathbb{N}} \frac{1}{|\Lambda_{m}|} H_{\mu}(\alpha^{\Lambda_{m}}) \leq  \liminf\limits_{n \to \infty}\frac{1}{|\Lambda_{n}|} H_{\mu}(\alpha^{\Lambda_{n}}), 	
\end{equation*}
and the result follows.
\end{proof}
%Now, let us present the definition of entropy of a m.p.d.s., the so-called Kolmogorov-Sinai entropy.

\begin{ex}[Bernoulli shifts II]\label{xanex}
Suppose that we are in the same setting as in Example \ref{bernoulli}. Let $\alpha$ be the partition of $X$ given by
\begin{equation}
\alpha = \left\{\pi^{-1}_{\mathbf{0}}(\{a\}) : a \in \mathcal{A}\right\}.
\end{equation} 
It is easy to check that for each positive integer $n$, we have $\alpha^{\Lambda_{n}} = \{[\omega] : \omega \in \mathcal{A}^{\Lambda_{n}}\}$.
If we denote by $p(a)$ the value of $\nu(\{a\})$, we obtain
\begin{eqnarray*}
H_{\mu}(\alpha^{\Lambda_{n}}) &=& \sum\limits_{\omega \in \mathcal{A}^{\Lambda_{n}}}-\mu([\omega]) \cdot \log \mu([\omega]) = - \sum\limits_{\omega \in \mathcal{A}^{\Lambda_{n}}} \left(\prod\limits_{i \in \Lambda_{n}}p(\omega_{i})\right) \cdot \log \left(\prod\limits_{i \in \Lambda_{n}}p(\omega_{i})\right)\\	
&=& - \sum\limits_{\omega \in \mathcal{A}^{\Lambda_{n}}} \sum\limits_{j \in \Lambda_{n}} \left(\prod\limits_{i \in \Lambda_{n}}p(\omega_{i})\right) \cdot \log p(\omega_{j}) \\
\end{eqnarray*}

\begin{eqnarray*}
&=& - \sum\limits_{j \in \Lambda_{n}}\sum\limits_{\omega \in \mathcal{A}^{\Lambda_{n}}} \left(\prod\limits_{i \in \Lambda_{n}\backslash \{j\}} p(\omega_{i})\right) p(\omega_{j}) \cdot \log p(\omega_{j}) \\
&=& - \sum\limits_{j \in \Lambda_{n}}\sum\limits_{\omega_{j} \in \mathcal{A}} p(\omega_{j}) \cdot \log p(\omega_{j}) \\
&=& - |\Lambda_{n}| \sum\limits_{a \in \mathcal{A}} p(a) \cdot \log p(a). 
\end{eqnarray*}
Thus, for this particular partition $\alpha$, we have $h_{\mu}(T,\alpha) = - \sum\limits_{a \in \mathcal{A}} p(a) \cdot \log p(a)$.
\end{ex}

\begin{mydef}[Entropy]
The entropy of a m.p.d.s. $(X,\mathcal{B},\mu,T)$ is defined by 
\begin{equation}\label{kse1}
h_{\mu}(T) \defeq \sup\{h_{\mu}(T,\alpha) : \alpha\;\text{is a finite partition of}\;X\}.	
\end{equation} 	
The quantity defined above is also called Kolmogorov-Sinai entropy. 
\end{mydef}

In the case where $\g = \mathbb{Z}_{+}$ or $\g = \mathbb{Z}$, we can interpret the quantity given by (\ref{kse1}) as being the maximum amount of information per unit of time that can be gained by an
observer that looks the system (with time evolution described by $T$) through a finite partition. 
In the following, we will show that makes no difference to the observer if he looks through a finite partition or through a $\mu$-partition with
finite entropy.

\begin{teo}[Entropy via $\mu$-partitions]\label{kse}
Let $(X,\mathcal{B},\mu,T)$ be a m.p.d.s. Then, we have
\begin{equation}\label{entropyeq}
h_{\mu}(T) = \sup\{h_{\mu}(T,\alpha) : \alpha\;\text{is a $\mu$-partition with $H_{\mu}(\alpha)< +\infty$}\}.		
\end{equation}	
\end{teo}

In order to prove this theorem, let us show the following preliminary results.
\begin{lemma}\label{coco}
For any two $\mu$-partitions $\alpha$ and $\beta$ with finite entropy, we have
\begin{equation}\label{eqf}
h_{\mu}(T,\beta) \leq h_{\mu}(T,\alpha) + H_{\mu}(\beta|\sigma(\alpha)).	
\end{equation}	
\end{lemma}
\begin{proof}
Our claim follows by using the properties of the entropy obtained in the previous section and applying Lemma \ref{xsatan}. In fact, 

\begin{eqnarray*}
H_{\mu}(\beta^{\Lambda_{n}}) &\leq& H_{\mu}(\alpha^{\Lambda_{n}} \vee \beta^{\Lambda_{n}}) = H_{\mu}(\alpha^{\Lambda_{n}}) + H_{\mu}(\beta^{\Lambda_{n}}|\sigma(\alpha^{\Lambda_{n}}))\\
&\leq& H_{\mu}(\alpha^{\Lambda_{n}}) + \sum\limits_{i \in \Lambda_{n}}H_{\mu}(T^{-i}\beta|\sigma(\alpha^{\Lambda_{n}})) \\	
&\leq& H_{\mu}(\alpha^{\Lambda_{n}}) + \sum\limits_{i \in \Lambda_{n}}H_{\mu}(T^{-i}\beta|\sigma(T^{-i}\alpha)) \\ 
&=& H_{\mu}(\alpha^{\Lambda_{n}}) + \sum\limits_{i \in \Lambda_{n}}H_{\mu}(T^{-i}\beta|T^{-i}\sigma(\alpha)) \\ 
&\leq& H_{\mu}(\alpha^{\Lambda_{n}}) + |\Lambda_{n}| \cdot H_{\mu}(\beta|\sigma(\alpha))	
\end{eqnarray*}
holds for each positive integer $n$. Thus, if we take the limit as $n$ approaches infinity on the equation
\begin{equation}
\frac{1}{|\Lambda_{n}|} H_{\mu}(\beta^{\Lambda_{n}}) \leq \frac{1}{|\Lambda_{n}|} H_{\mu}(\alpha^{\Lambda_{n}}) + H_{\mu}(\beta|\sigma(\alpha)),	
\end{equation}	
the result follows.
\end{proof}

\begin{lemma}\label{choko}
In the case	where $\alpha$ and $\beta$ are $\mu$-partitions with finite entropy such that $\alpha \succeq \beta$, we have
\begin{equation}
h_{\mu}(T,\beta) \leq h_{\mu}(T,\alpha).	
\end{equation}
\end{lemma}

\begin{proof}
The result easily follows from Proposition \ref{xavasca} and Lemma \ref{coco}.
\end{proof}

\begin{prop}\label{desgraça}
Let $(X,\mathcal{B},\mu,T)$ be a m.p.d.s. and let $\alpha$ be a $\mu$-partition with $H_{\mu}(\alpha) < + \infty$. Then, we have
\begin{equation}
h_{\mu}(T,\alpha) = \sup\left\{h_{\mu}(T,\beta) : \beta \;\text{is a finite partition such that}\; \alpha \succeq \beta\right\}.		
\end{equation}
\end{prop}

\begin{proof}
Observe that due to Lemma \ref{choko}, it is sufficient to prove that for each positive number $\epsilon$ there is a finite partition $\beta$ with
$\alpha \succeq \beta$ satisfying $h_{\mu}(T,\alpha) - \epsilon < h_{\mu}(T,\beta)$. Since $\sum\limits_{A \in \alpha}-\mu(A)\cdot \log \mu(A) < +\infty$, it follows that
for every $\epsilon > 0$ there is a finite subset $\alpha '$ of $\alpha$ such that $\sum\limits_{A \in \alpha\backslash \alpha '}-\mu(A)\cdot \log \mu(A) < \epsilon$. If we let
$N_{0} = \bigcup\{A \cap B : A,B \in \alpha'\;\text{such that}\; A \neq B\}$ and $\widetilde{A} = (X \backslash \bigcup \alpha') \cup N_{0}$, then the collection
$\beta = \{A\backslash N_{0} : A \in \alpha'\}\cup \{\widetilde{A}\}$ is a finite partition of $X$ that satisfies $\alpha \succeq \beta$, and
\begin{eqnarray*}
H_{\mu}(\alpha|\sigma(\beta)) &=& \sum\limits_{A \in \alpha}\sum\limits_{B \in \beta} -\mu(A \cap B)\cdot\log\frac{\mu(A \cap B)}{\mu(B)} 
= \sum\limits_{A \in \alpha} -\mu(A \cap \widetilde{A})\cdot\log\frac{\mu(A \cap \widetilde{A})}{\mu(\widetilde{A})} \\
&=& \sum\limits_{A \in \alpha \backslash \alpha '} -\mu(A \cap \widetilde{A})\cdot\log\frac{\mu(A \cap \widetilde{A})}{\mu(\widetilde{A})} 
\leq \sum\limits_{A \in \alpha \backslash \alpha '} -\mu(A \cap \widetilde{A})\cdot\log\mu(A \cap \widetilde{A}) \\
&=& \sum\limits_{A \in \alpha \backslash \alpha '} -\mu(A)\cdot\log \mu(A) < \epsilon.
\end{eqnarray*} 
Thus, we conclude the proof by using Lemma \ref{coco}.
\end{proof}

\begin{proof}[Proof of Theorem \ref{kse}]
Since every finite partition has finite entropy, it follows that
$h_{\mu}(T) \leq \sup\{h_{\mu}(T,\alpha) : \alpha\;\text{is a $\mu$-partition with $H_{\mu}(\alpha)< +\infty$}\}$. On the other hand, if we let $\alpha$ be a $\mu$-partition
with finite entropy and let $\epsilon$ be a positive number, according to Proposition \ref{desgraça} there is a finite partition $\beta$ of $X$ such that
$h_{\mu}(T,\alpha) - \epsilon < h_{\mu}(T,\beta) \leq h_{\mu}(T)$. It implies that $\sup\{h_{\mu}(T,\alpha) : \alpha\;\text{is a $\mu$-partition with $H_{\mu}(\alpha)< +\infty$}\} \leq h_{\mu}(T) + \epsilon$ 
holds for each positive number $\epsilon$, thus the result follows.
\end{proof}

The first question that naturally arises is: Under which conditions does the supremum that occurs in equation (\ref{entropyeq}) is attained? The answer for this question is provided by
Kolmogorov-Sinai theorem. Before we state this result, let us introduce some nomenclature. A $\mu$-partition $\alpha$ will be called a $\mu$-generator for
$(X,\mathcal{B},\mu,T)$ if the smallest $\sigma$-algebra that contains all the collections $T^{-i}\alpha$ coincides with $\mathcal{B}$.
The theorem mentioned above is very useful to compute the entropy of a m.p.d.s. once a $\mu$-generator (with finite entropy) is known. 

\begin{teo}[Kolmogorov-Sinai]\label{ks}
If $\alpha$ is a $\mu$-generator for $(X,\mathcal{B},\mu,T)$ such that $H_{\mu}(\alpha) < +\infty$, then $h_{\mu}(T) = h_{\mu}(T,\alpha)$. 	
\end{teo}
\begin{proof}
See Keller \cite{keller:98}.	
\end{proof}

\begin{ex}[Bernoulli shifts III]
Suppose that we are in the same setting as in Example \ref{bernoulli}. Let $\alpha$ be the partition
\begin{equation}
\alpha = \left\{\pi^{-1}_{\mathbf{0}}(\{a\}) : a \in \mathcal{A}\right\}.
\end{equation} 
It is easy to check that for each
$i$ in $\g$, we have $T^{-i}\alpha = \{\pi^{-1}_{i}(\{a\}) : a \in \mathcal{A}\}$. Clearly, the partition $\alpha$ defined above
is a $\mu$-generator for $(X,\mathcal{B},\mu,T)$. Thus, according to Example \ref{xanex} and Theorem \ref{ks}, it follows that
\begin{eqnarray}
h_{\mu}(T) = - \sum\limits_{a \in \mathcal{A}}p(a) \log p(a),	
\end{eqnarray}
where $p(a)$ denotes the value of $\nu(\{a\})$ for each $a$.
\end{ex}

\section{Pressure}\label{pressure}

Recall that in the previous section we studied a few properties of the entropy of measure preserving dynamical systems without making any topological assumption. 
In the following, in order to introduce the definition of pressure and of an equilibrium measure, we will always 
suppose that $X$ is a compact metrizable space together with its Borel $\sigma$-algebra and that $T$ acts continuously on $X$. This setting can be precisely formulated 
by introducing the concept of a topological dynamical system.
%	 presenting the concept of pressure of a topological dynamical system in order to introduce the study of equilibrium measures.

\begin{mydef}
A topological dynamical system (t.d.s.) is a pair $(X,T)$ consisting of
\begin{itemize}
\item[(a)] a nonempty compact metrizable space $X$, and
\item[(b)] a map $T$ that associates to each point $i$ in $\g$ a continuous function $T^{i} : X \rightarrow X$ such that  
\begin{equation*}
T^{\textbf{0}} = \mathsf{id}_{X},
\end{equation*}
and
\begin{equation*}
T^{i+j} = T^{i}\circ T^{j}
\end{equation*}
holds for every $i$ and $j$ in $\g$. In other words, $T$ is a continuous action of the group $\g = \zd$ or of the monoid $\g = \zdp$ on $X$.
\end{itemize}
\end{mydef}

\begin{ex}[Subshifts as dynamical systems]\label{subdysy}
Let $X$ be a nonempty subshift of $\gds$. Naturally, we will always consider the subshift $X$ endowed with the topology inherited from the full shift $\gds$. Observe that condition (a) above is satisfied, 
since $X$ is a closed subset of the compact metrizable space $\gds$. 
Let us define the shift action on $X$ as the map $T$ that associates to each point $i$ in $\g$ a function $T^{i} : X \rightarrow X$ given by
$T^{i}(x) = \sigma^{i}(x)$, where $\sigma^{i}$ is the shift by $i$. Using Proposition \ref{homeo}, one can easily verify that each map $T^{i}$ is continuous, 
and, according to Facts \ref{eta} and \ref{etaa}, it follows that $T^{\bf{0}} = \mathsf{id}_X$, and
$T^{i+j} = T^{i} \circ T^{j}$ holds for each $i$ and $j$ in $\g$. 
Thus, condition (b) follows. We conclude that the pair $(X,T)$ is a topological dynamical system.
%In particular, in the case where $\g  = \zd$, each map $T^{i}$ is invertible and its inverse is given by $(T^{i})^{-1} = T^{-i}$, thus
%$T^{i}$ is a homeomorphism. 	
\end{ex}

In the following, we will always let $(X,T)$ be a topological dynamical system, and assume that $X$ is endowed with its Borel $\sigma$-algebra. 
Under these assumptions, we immediately see that $T$ is a Borel measurable action of $\g$ on $X$. We will also let $\mathcal{M}(T)$ denote the set of all $T$-invariant Borel probability measures on $X$, i.e.,
the set $\mathcal{M}(T)$ consists of all Borel probability measures $\mu$ on $X$ such that $T^{i}_{\ast}\mu = \mu$ holds for each $i$ in $\g$.  
It is well known that $\mathcal{M}(T)$ is a nonempty, compact, convex subset of the set of all Borel probability measures on $X$ (see Keller \cite{keller:98}). 

\begin{mydef}
For each real-valued continuous function $f$ on $X$, we define its pressure by
\begin{equation}\label{pressuredef}
p(f) \defeq \sup\limits_{\mu \in \mathcal{M}(T)}\left\{h_{\mu}(T) + \int_{X} f\,d\mu\right\}.	
\end{equation}
In particular, if $f$ is identically zero, the quantity
\begin{equation}
p(0) = \sup\limits_{\mu \in \mathcal{M}(T)}h_{\mu}(T)
\end{equation}	
is called the topological entropy of $T$.
\end{mydef}

An equilibrium measure for a continuous function on $X$ will be defined as being an element of $\mathcal{M}(T)$ such that the supremum in (\ref{pressuredef}) is attained.

\begin{mydef}
A $T$-invariant Borel probability measure $\mu$ on $X$ is said to be an equilibrium measure for a continuous function $f : X \rightarrow \mathbb{R}$ if 
\begin{equation}
p(f) = h_{\mu}(T) + \int_{X} f\,d\mu.	
\end{equation}
In the case where $\mu$ is an equilibrium measure for the identically zero function, we say that it is a measure of maximum entropy for $T$.
\end{mydef}

%The problem of existence of an equilibrium measure naturaly arises. The next theorem gives an answer to this question.

%\begin{teo}\label{up}
%If the entropy function $h_{T} : \mathcal{M}(T) \rightarrow [0,+\infty]$ is upper semicontinuous, then the topological entropy is finite
%and for each $f$ in $C(X)$ the set of all equilibrium measures for $f$ is nonempty, compact, convex subset of $\mathcal{M}(T)$. 	
%\end{teo}
%\begin{proof}
%See Keller \cite{keller:98}.	
%\end{proof}

The t.d.s. defined in Example \ref{subdysy} is an important example of an expansive system (the definition is presented bellow). For such systems the expansivity property 
ensures the existence of an equilibrium measure and the finiteness of the topological entropy (see \cite{keller:98}). 
 
\begin{mydef}
Let $(X,T)$ be a t.d.s. and let $\rho$ be a metric that induces the topology of $X$. We say that $T$ is expansive if 
there is a positive number $\epsilon$ such that for any two distinct elements $x$ and $y$ of $X$ one can find a point $i$ in $\g$ satisfying $\rho(T^{i}x,T^{i}y) \geq \epsilon$.
\end{mydef}

\begin{remark}
Note that the property of expansiveness depends only on the topology of $X$, in the sense that the definition above does not depends on the choice of the metric $\rho$. 	
\end{remark}

\begin{ex}[Expansiveness of shift actions]\label{subexp}
Let us consider the t.d.s. $(X,T)$ defined in Example \ref{subdysy} and the metric $\rho$ defined by equation (\ref{metric}) restricted to $X \times X$. Let us show that the shift action $T$ is expansive. Given two distinct points $x$ and $y$ in $X$ there is an
element $i$ of $\g$ such that $x_{i} \neq y_{i}$. It follows that $T_{i}(x)_{\Lambda_{n}} \neq T_{i}(y)_{\Lambda_{n}}$ for each positive integer $n$, thus $\rho(x,y) = 1$.	
\end{ex}
        % associado ao arquivo: 'cap-introducao.tex'
%% ------------------------------------------------------------------------- %%
\chapter{Conformal measures}
\label{cap:res}

Our aim in this chapter is to provide some basic notions of conformal measures. The tools developed in this chapter will be used in Chapter \ref{cap:gibbs} to formulate the definition
of a Gibbs measure. For further references see \cite{feldman:77}, \cite{schpet}, \cite{aaronson:07}.

First, we introduce the concept of a Borel equivalence relation, and turn to some examples. At the end of this chapter,   
we finally introduce and study a few properties of conformal measures that will be essential to provide a precise formulation of the main results in this work. 
Due to its fundamental importance, all the results in this chapter are proved in detail. 

\section{Borel equivalence relations}\label{uia}

Let $X$ be an arbitrary set and $\EuScript{R} \subseteq X \times X$ an equivalence relation. We denote the equivalence class of an element
$x$ of $X$  by $\EuScript{R}(x) \defeq \{y \in X : (x,y) \in \EuScript{R}\}$, and, given a subset $A$ of $X$, we define its
$\EuScript{R}$-saturation by $\EuScript{R}(A) \defeq \bigcup\{\EuScript{R}(x) : x \in A\}$. In the case where $\EuScript{R}(x)$ is a countable set 
for each $x \in X$, then $\EuScript{R}$ is said to be a countable equivalence relation. 

Recall that a topological space $X$ is completely metrizable if there is a metric $\rho$ on $X$ compatible with
its topology such that the pair $(X,\rho)$ is a complete metric space. A completely metrizable separable space is called a
Polish space.
	
\begin{mydef}
Let $\EuScript{R}$ be an equivalence relation on a Polish space $X$. Then, we say that $\EuScript{R}$ is a Borel equivalence relation if 
it is a Borel subset of $X \times X$.
\end{mydef}

In what follows we provide some basic examples of countable Borel equivalence relations. We do this presentation as detailed as possible, 
since these examples play a fundamental role on the development of the following chapters.

\begin{ex}[Orbit equivalence relation]\label{group}
Let $X$ be a Polish space and let
\[\mathsf{Aut(}X\mathsf{)} \defeq \left\{f \in X^X : f\; \textnormal{is invertible, and both}\; f\; \text{and}\;f^{-1} \; \textnormal{are Borel measurable}\right\}\]
be the set of all Borel automorphisms of $X$. Note that $\mathsf{Aut(}X\mathsf{)}$ is a group with respect to the operation of composition of functions. Then, let us
consider a countable group $G \subseteq \mathsf{Aut(}X\mathsf{)}$ and define the orbit equivalence relation by
\[\EuScript{R}_G \defeq \big\{(x,y) \in X \times X:y=g(x)\;\text{for some}\;g\in G\big\}.\] 
Let us prove that $\EuScript{R}_G$ is a countable Borel equivalence relation on $X$.

\begin{itemize}
\item[(a)] For each $x \in X$, we have $(x,x) = (x,\mathsf{id}_X(x)) \in \EuScript{R}_G$, where $\mathsf{id}_{X}$ is the identity mapping of $X$. 

\item[(b)] Given two points $x$ and $y$ in $X$, if the pair $(x,y)$ belongs to $\EuScript{R}_G$, let us consider the element $g \in G$ such that $y=g(x)$. Therefore, we have	
$(y,x) = (y,g^{-1}(y)) \in \EuScript{R}_G$.

\item[(c)] For any $x,y,z \in X$, if each pair $(x,y)$ and $(y,z)$ belongs to $\EuScript{R}_G$, then there are two elements $g_1, g_2 \in G$ such that 
$y=g_1(x)$ and $z=g_2(y)$. It follows that $(x,z) = \left(x,g_2\circ g_1(x)\right) \in \EuScript{R}_G$.
\end{itemize}

Since $\EuScript{R}_G = \bigcup\limits_{g \in G}\mathsf{gr}(g)$, where $\mathsf{gr}(g) = \left\{(x,y) \in X\times X : y=g(x)\right\}$ is the graph of $g$, then
by Theorem $8.3.4$ from \cite{cohn:13} we know that under these conditions each graph $\mathsf{gr}(g)$ is a Borel subset of $X \times X$. It follows that 
$\EuScript{R}_G$ is also a Borel set.
Moreover, $\EuScript{R}_G(x) = \{g(x) : g \in G\}$ is a countable set for each $x \in X$.
\end{ex}

\begin{ex}[Gibbs relation]\label{gr}
Let $(X,T)$ be a topological dynamical system, and suppose that $T$ is an expansive action of the group $\zd$ on $X$ (see Section \ref{pressure}). 
The Gibbs (or homoclinic) relation of $(X,T)$ is defined by
\[\gr(X,T) \defeq \left\{(x,y) \in X \times X : \lim\limits_{\|i\| \to \infty} \rho(T^ix,T^iy) = 0 \right\},\]
where $\rho$ is a metric on $X$ which induces its topology. Note that the definition of $\gr(X,T)$ does not depends on the choice of the metric $\rho$.
If $X$ and $T$ are clear from the context, we will simply denote $\gr(X,T)$ by $\gr$.

Let us show that $\gr$ is a countable Borel equivalence relation on $X$. First, let us verify that
$\gr$ is an equivalence relation.

\begin{itemize}
\item[(a)] For each $x \in X$, we have $\lim\limits_{\|i\| \to \infty} \rho(T^ix,T^ix) = 0$, i.e., the pair $(x,x)$ belongs to $\gr$.  

\item[(b)] Given two elements $x$ and $y$ in $X$, if the pair $(x,y)$ belongs to $\gr$, it follows that
$\lim\limits_{\|i\| \to \infty} \rho(T^iy,T^ix) = \lim\limits_{\|i\| \to \infty} \rho(T^ix,T^iy) = 0 $. Therefore, we have $(y,x) \in \gr$. 

\item[(c)]For every points $x,y$, and $z$ in $X$, if each pair $(x,y)$ and $(y,z)$ belongs to $\gr$, then for any $\epsilon > 0$ 
there is a positive integer $n_{0}$ such that
\[\rho(T^ix,T^iy)< \frac{\epsilon}{2}\]
and
\[\rho(T^iy,T^iz)< \frac{\epsilon}{2}\]
holds whenever $i$ satisfies $\|i\| \geq n_{0}$.
Therefore, it follows that for all $i \in \zd$,
\[\|i\| \geq n_0 \;\,\text{implies that}\;\, \rho(T^ix,T^iz)< \epsilon,\]
i.e., $(x,z) \in \gr$.
\end{itemize}

Now, let us show that $\gr$ is a Borel subset of $X \times X$. Since each map $(x,y) \mapsto \rho(T^{i}x,T^{i}y)$  defined on $X \times X$ is continuous, 
then
\[\left\{(x,y)\in X \times X : \rho(T^{i}x,T^{i}y)<\frac{1}{n}\right\}\]
is an open subset of $X \times X$ for each positive integer $n$. Therefore, the result follows from the fact that
\[\gr = \bigcap\limits_{n \in \mathbb{N}}\bigcup\limits_{N \in \mathbb{N}}\bigcap\limits_{\substack{i \in \zd \\ \|i\| \geq N}}\left\{(x,y)\in X \times X : \rho(T^{i}x,T^{i}y)<\frac{1}{n}\right\}.\]

The following result shows that $\gr$ is a countable equivalence relation.

\begin{lemma}
Each equivalence class $\gr(x)$ is a countable set.	
\end{lemma}

\begin{proof}
Since $T$ is an expansive action, let $\epsilon>0$ be a positive number  
such that for every pair of distinct points $x$ and $y$ in $X$, we have
$\rho(T^{i}x,T^{i}y) \geq \epsilon$ for some $i \in \zd$. It is easy to prove that for each $x \in X$, we have
\[\gr(x) \subseteq \bigcup\limits_{n \in \mathbb{N}} E_n(x),\]
where $E_n(x) = \left\{y \in X: \rho(T^ix,T^iy) < \frac{\epsilon}{2} \;\text{holds whenever}\; i\;\text{satisfies}\; \|i\| \geq n \right\}$.
If we prove that $E_n(x)$ is a finite set for each $n$, then the result follows.

Given $n \in \mathbb{N}$, let us consider the metric $\rho_n$ on $X$ defined
by $\rho_n(x,y) = \max\limits_{\|i\|<n}\rho(T^ix,T^iy)$. Since $\rho$ and $\rho_{n}$ are two equivalent metrics on $X$,
it follows that $(X,\rho_n)$ is a compact metric space. Furthermore,
for any two distinct points $y_1$ and $y_2$ in $E_n(x)$, necessarily we have   
$\rho(T^{i}y_1,T^{i}y_2) \geq \epsilon$ for some $i \in \zd$ with $\|i\|<n$, thus $\rho_n(y_1,y_2) \geq \epsilon$. It means that $E_n(x)$ is an $\epsilon$-separated subset  
of the compact metric space $(X,\rho_n)$, therefore,
we conclude that $E_n(x)$ is finite.
\end{proof}
\end{ex}

\begin{ex}[Gibbs relation for subshifts]\label{grshift}
Let $X$ be a nonempty subshift of $\ds$ and let $T$ be the shift action on $X$ (see Example \ref{subdysy}).
According to Example \ref{subexp}, the topological dynamical system $(X,T)$ is expansive.
Let us show that the Gibbs relation of $(X,T)$ is given by

\begin{equation}\label{greq}
\gr = \big\{(x,y) \in X \times X : x_{\Lambda^c} = y_{\Lambda^c}\;\text{for some}\;\Lambda \subseteq \zd\; \text{finite}\big\}.
\end{equation}

For each $(x,y) \in \gr$ there is a positive integer $n$ such that $\rho(T^ix,T^iy) \leq \frac{1}{2}$ holds whenever $i$ satisfies $\|i\| \geq n$, in other words, we have $x_i = y_i$ for every
$i \in \Lambda_{n}^{c}$.
On the other hand, let $(x,y)$ be a pair in $X\times X$ such that $x_{\Lambda^c} = y_{\Lambda^c}$ for some $\Lambda \subseteq \zd$ finite. Without loss of generality, it can be supposed
that $\Lambda = \Lambda_{m}$ for some $m \in \mathbb{N}$. Therefore, given a positive integer $n$, it follows that  
$(T^ix)_{\Lambda_{n}} = (T^iy)_{\Lambda_{n}}$ holds whenever $i$ satisfies $\|i\| \geq m+n$.
\end{ex}

\section{Radon-Nikodym derivatives}\label{rn}

Until the end of this chapter, we will denote by $X$ a Polish space and by $\EuScript{R}$ a countable Borel equivalence relation on $X$. We will also let $\mathcal{C}$ denote 
the restriction of the Borel $\sigma$-algebra of $X \times X$ to $\EuScript{R}$. Recall that
$\mathcal{C} = \{B\cap \EuScript{R} : B \;\text{is a Borel set of}\; X \times X\} = \{B \subseteq \EuScript{R} : B \;\text{is a Borel set of}\; X \times X\}$.

Let us define the functions $\pi_l, \pi_r : \EuScript{R} \rightarrow X$ by letting $\pi_l(x,y) = x$ and $\pi_r(x,y) = y$.
The maps $\pi_l$ and $\pi_r$ defined above are called the left projection and the right projection of $\EuScript{R}$. It is also useful to consider the flip map $\theta: \EuScript{R} \rightarrow \EuScript{R}$ defined by
$\theta(x,y) = (y,x)$. 
\begin{remark}
We claim that $\theta$ is an isomorphism, thus it sends
sets in $\mathcal{C}$ to sets in $\mathcal{C}$; and both projections $\pi_l$ and $\pi_r$ send sets in $\mathcal{C}$ to Borel sets of $X$.
The proof of the first statement is straightforward.
The second one follows by using Theorem $4.12.3$ from \cite{sri:98} and the fact that $\pi_r = \pi_l \circ \theta$.	
\end{remark}

In the following, we present the necessary mathematical tools that will allow us to introduce the notion of a Radon-Nikodym derivative of a $\sigma$-finite Borel measure on $X$ with respect to
$\EuScript{R}$. Later, we will use this notion to give rise to the concept of a conformal measure. 
Now let us present an important auxiliary result due to Feldman and Moore \cite{feldman:77}. Note that
this result is closely related to Example \ref{group}.

\begin{teo}[Feldman and Moore]\label{fmteo}
Let $\EuScript{R}$ be a countable Borel equivalence relation on a Polish space $X$. Then there exists a countable group
$G \subseteq \mathsf{Aut(}X\mathsf{)}$ such that $\EuScript{R} = \EuScript{R}_G$.
\end{teo}
\begin{proof}
For a modern proof, see Theorem 5.8.13 from \cite{sri:98}.
\end{proof}

Before we follow to the next definition, let us show that for every Borel set $A$ of $X$, its $\EuScript{R}$-saturation $\EuScript{R}(A)$ is also a Borel set of $X$. Indeed,
according to Theorem \ref{fmteo}, there is a countable group $G \subseteq \mathsf{Aut(}X\mathsf{)}$ such that $\EuScript{R} = \EuScript{R}_G$, then
$\EuScript{R}(A) = \bigcup\limits_{\varphi \in G} \varphi^{-1}(A)$.

\begin{mydef}
Let $\mu$ be a $\sigma$-finite Borel measure on $X$. We say that $\mu$ is quasi-invariant under $\EuScript{R}$ (or $\EuScript{R}$ is nonsingular with respect to $\mu$) if the condition
\[\mu(A)=0 \;\,\text{implies}\;\, \mu\left(\EuScript{R}(A)\right) = 0\] 
is satisfied for every Borel set $A$ of X.
\end{mydef}

In the remainder of this section, we assume that $\mu$ is a $\sigma$-finite Borel measure on $X$ quasi-invariant under $\EuScript{R}$. 

Let us prove the following preliminary result.

\begin{prop}\label{xala}
For all $C \in \mathcal{C}$, we have $\mu\left(\pi_l(C)\right) = 0$ if and only if $\mu\left(\pi_r(C)\right) = 0$.
\end{prop} 
\begin{proof}
First, let us show that given an arbitrary subset $A$ of $X$, its $\EuScript{R}$-saturation coincides with the sets $\pi_r\left(\pi_l^{-1}(A)\right)$ and $ \pi_l\left(\pi_r^{-1}(A)\right)$ .
Indeed, for all $y$, 	
\begin{eqnarray*}
y \in \EuScript{R}(A) 
&\iff& y \in \EuScript{R}(x) \;\,\text{for some}\; x \in A \\
&\iff& (x,y) \in \EuScript{R} \;\,\text{for some}\; x \in A \\
&\iff& y=\pi_r(z) \;\,\text{for some}\;z \in \pi_{l}^{-1}(A) \\
&\iff& y \in \pi_r\left(\pi_l^{-1}(A)\right).
\end{eqnarray*}
Then, it follows that $ \EuScript{R}(A) = \pi_r\left(\pi_l^{-1}(A)\right)$. Furthermore, using the identity $\pi_r = \pi_l\, \circ\, \theta$, we have
$\pi_r\left(\pi_l^{-1}(A)\right) = \pi_l\left(\theta\left(\pi_l^{-1}(A)\right)\right) = \pi_l\left(\theta^{-1}\left(\pi_l^{-1}(A)\right)\right) = \pi_l\left(\pi_r^{-1}(A)\right)$.

Let $C$ be a set of $\mathcal{C}$ satisfiying $\mu\left(\pi_l(C)\right) = 0$. By hypothesis, the measure $\mu$ is quasi-invariant
under $\EuScript{R}$, then $\mu\left(\EuScript{R}(\pi_l(C))\right) =0$. Since $C \subseteq \pi_l^{-1}\left(\pi_l(C)\right)$, it follows that
$\pi_r(C) \subseteq \pi_r\left(\pi_l^{-1}(\pi_l(C))\right) = \EuScript{R}(\pi_l(C))$, thus $\mu\left(\pi_r(C)\right) = 0$.
Analogously, one can easily prove the opposite implication.
\end{proof}

The next theorem will provide us two measures on the measurable space $(\EuScript{R},\mathcal{C})$ which will allow us to define the Radon-Nikodym derivative of
$\mu$ with respect to $\EuScript{R}$.

\begin{teo}
The following properties hold.
\begin{itemize}
\item[(a)] For each $C \in \mathcal{C}$, the map $x \mapsto \left|\pi_{l}^{-1}(\{x\})\cap C\right|$ defined on $X$ is Borel measurable, and the formula 
\begin{equation}
\nu_l(C) = \int_X{\left|\pi_{l}^{-1}(\{x\}) \cap C\right|\, d\mu (x)} 
\end{equation}
defines a $\sigma$-finite measure on $\mathcal{C}$. This measure will be referred to as the left counting measure of $\mu$.

\item[(b)] The null sets of $\nu_l$ are exactly the elements of $\left\{C \in \mathcal{C}: \mu\left(\pi_l(C)\right) = 0\right\}$.

\item[(c)] The right counting measure of $\mu$, defined by $\nu_r = \theta_{\ast}\nu_l$, is a $\sigma$-finite measure on $\mathcal{C}$. Moreover,
\begin{equation}
\nu_r(C) = \int_{X}{\left|\pi_{r}^{-1}(\{x\}) \cap C\right|\, d\mu (x)} 
\end{equation}
for every $C \in \mathcal{C}$.

\item[(d)] We have $\nu_l \ll \nu_r$ and $\nu_r \ll \nu_l$.
\end{itemize}
\end{teo}
\begin{proof}
(a) According to Theorem 5.8.11 from \cite{sri:98} (or Theorem 18.10 from \cite{kechris:95}), we can write $\EuScript{R}$ as a countable union of Borel graphs. 
Therefore, there exists a partition $(C_n)_{n \in \mathbb{N}}$ of $\EuScript{R}$ into Borel sets such that each $\pi_l\restriction_{C_n}$ is one-to-one.

For each positive integer $n$, let us define $\nu^{n} : \mathcal{C} \rightarrow [0,+\infty]$ by letting
\[\nu^{n}(C) = \mu\left(\pi_l(C_n \cap C)\right) \quad\text{for all}\;C \in \mathcal{C}.\]
We claim that $\nu^{n}$ is a $\sigma$-finite measure on $\mathcal{C}$. Indeed, the countable additivity of $\nu^{n}$ follows from the fact that $\pi_l$ is one-to-one on $C_n$, moreover,
the condition $\nu^{n}(\emptyset) = 0$ is easily verified.
The assumption of $\sigma$-finiteness of $\mu$ implies that we can write $X$ as $\bigcup\limits_{m \in \mathbb{N}}X_m$, where each $X_{m}$ is a Borel set of $X$ such that $\mu(X_m)<+\infty$. 
Then, if we define
$R_{m} = \pi_l^{-1}(X_m)$ for each $m$, it follows that $\EuScript{R} = \bigcup\limits_{m \in \mathbb{N}}R_{m}$, where each $R_{m}$ belongs to $\mathcal{C}$ and satisfies
\[\nu^{n}(R_{m}) = \mu\left(\pi_l\left(C_n \cap R_{m}\right)\right) \leq \mu\left(\pi_l\left(R_{m}\right)\right) \leq \mu(X_m) < + \infty.\]

\begin{lemma}{\label{vudvndsu}}
Let $C \in \mathcal{C}$ such that $\pi_{l}\restriction_{C}$ is one-to-one. Then, the expression
\[\left|\pi_{l}^{-1}(\{x\})\cap C\right| = \rchi_{\pi_l(C)}(x)\]
holds for every $x \in X$.
\end{lemma}
\begin{proof}[Proof of Lemma \ref{vudvndsu}]\renewcommand{\qedsymbol}{\QEDB}
Since $\pi_{l}\restriction_{C}$ is one-to-one, then, for each element $x$ of $X$ the number $\left|\pi_{l}^{-1}(\{x\})\cap C\right|$ is equal to either $0$ or $1$ . Therefore, for all $x \in X$, we have
\begin{align*}
x \in \pi_l(C) 
&\iff \text{exists}\; z \in C \;\text{such that}\; \pi_l(z) = x \\ 
&\iff \text{exists}\; z \in C \;\text{such that}\; z \in \pi_{l}^{-1}(\{x\}) \\ 
&\iff \pi_{l}^{-1}(\{x\})\cap C \neq \emptyset \\
&\iff \big|\pi_{l}^{-1}(\{x\})\cap C\big| = 1.
\end{align*}
\end{proof}

Now, let us prove that each map $x \mapsto \left|\pi_l^{-1}(\{x\}) \cap C\right|$ is a Borel function on $X$. By using Lemma \ref{vudvndsu}, the identities
\begin{equation}\label{aeeee}
\left|\pi_l^{-1}(\{x\}) \cap C\right| = \left|\,\bigcup\limits_{n \in \mathbb{N}}{\pi_{l}^{-1}(\{x\}) \cap (C_{n}\cap C)}\,\right|  
= \sum\limits_{n = 1}^{\infty}\left|\pi_{l}^{-1}(\{x\}) \cap (C_{n} \cap C)\right| 
= \sum\limits_{n = 1}^{\infty}\rchi_{\pi_l(C_{n} \cap C)}(x)
\end{equation}
hold for every $x \in X$, therefore, the measurability of $x \mapsto \left|\pi_l^{-1}(\{x\}) \cap C\right|$ follows.

Then, we are finally allowed to define $\nu_{l}$ by letting
\[\nu_{l}(C) = \int_X{\left|\pi_{l}^{-1}(\{x\}) \cap C\right|\,d\mu (x)}\]
for each $C \in \mathcal{C}$. The assertion that $\nu_l$ is a measure follows from the expression $\nu_l(C) = \sum\limits_{n = 1}^{\infty} \nu^{n}(C)$,
obtained by integrating equation (\ref{aeeee}) with respect to $\mu$. Since each $\nu^{n}$ is a $\sigma$-finite measure on $\mathcal{C}$, then $\EuScript{R}$ can be expressed as a union of a sequence $\left(R_{m}^{n}\right)_{m \in \mathbb{N}}$ of sets that
belong to $\mathcal{C}$ and  have finite measure under $\nu^{n}$. If we define $R_{m,n} = R_{m}^{n} \cap C_{n}$ for each $m$ and $n$, it follows that
$\EuScript{R} = \bigcup\limits_{m,n \in \mathbb{N}}R_{m,n}$ and each $R_{m,n}$ satisfies $\nu_{l}(R_{m,n}) = \nu^{n}(R_{m,n}) = \nu^{n}(R_{m}^{n}) < +\infty$.
 
(b) For all $C  \in \mathcal{C}$, we have $\nu_l(C) = 0$ if and only if $\left|\pi_{l}^{-1}(\{x\}) \cap C\right| = 0$ for $\mu$-almost every $x \in X$.
Then, the result follows from the identity $\pi_{l}(C) = \left\{x \in X: \left|\pi_{l}^{-1}(\{x\}) \cap C\right|\neq 0\right\}$.

(c) One can easily show that the $\sigma$-finiteness of $\nu_{l}$ implies that $\nu_{r}$ is also a $\sigma$-finite measure.
Moreover, given a set $C \in \mathcal{C}$, we have
\begin{eqnarray*}
\nu_r(C) &=& \int_X{\left|\pi_{l}^{-1}(\{x\}) \cap \theta^{-1}(C)\right|\, d\mu (x)} \\
&=& \int_X{\left|\theta^{-1}\big(\pi_{r}^{-1}(\{x\}) \cap C\big)\right|\, d\mu (x)} \\
&=& \int_X{\left|\pi_{r}^{-1}(\{x\}) \cap C\right|\, d\mu (x)}.
\end{eqnarray*}

(d) Since for each element $C$ of $\mathcal{C}$ we have
\[\nu_{r}(C) = \nu_{l}\left(\theta^{-1}(C)\right)\] 
and
\[\mu\left(\pi_{r}(C)\right) = \mu\left(\pi_{l}\left(\theta(C)\right)\right) = \mu\left(\pi_{l}\left(\theta^{-1}(C)\right)\right),\]
then it follows from item (b) that the null sets of $\nu_{r}$ are exactely the elements of $\{C \in \mathcal{C} : \mu\left(\pi_{r}(C)\right)=0\}$. Therefore, the result follows by using 
Proposition \ref{xala} and item (b).
 \end{proof}

From now on, instead of we say that a property of points of $\EuScript{R}$ holds $\nu_{l}$-a.e. (equivalently, $\nu_{r}$-a.e.), we will simply say that this property holds almost everywhere (or a.e.).

The theorem above states that the left and right counting measures of $\mu$, respectively denoted by $\nu_{l}$ and $\nu_{r}$, are both $\sigma$-finite measures on $\mathcal{C}$ absolutely continuous with respect to each other.
Then, the Radon-Nikodym derivatives $\frac{d\nu_{l}}{d\nu_{r}}$ and $\frac{d\nu_{r}}{d\nu_{l}}$ satisfy the identity $\frac{d\nu_{l}}{d\nu_{r}}\cdot \frac{d\nu_{r}}{d\nu_{l}} = 1$
a.e., in particular, $\frac{d\nu_{l}}{d\nu_{r}}$ and $\frac{d\nu_{r}}{d\nu_{l}}$ are both positive functions a.e. 

\begin{mydef}
Let $\mu$ be a $\sigma$-finite Borel measure on $X$ quasi-invariant under $\EuScript{R}$. Then, the Radon-Nikodym derivative of $\mu$ with respect to $\EuScript{R}$ is the measurable function $D_{\mu,\EuScript{R}}$ on $\EuScript{R}$ defined by 
\begin{equation}\label{defrn}
D_{\mu,\EuScript{R}} = \frac{d\nu_l}{d\nu_r}.
\end{equation}
The function $D_{\mu,\EuScript{R}}$ is unique up to almost everywhere equality.
\end{mydef}

In the following, given a Borel subset $A$ of $X$ we will denote by $\mu_A$ the measure $\mu$ restricted to the $\sigma$-algebra of Borel subsets of $A$.

\begin{prop}\label{boraut}
Let $A, B \subseteq X$ be Borel sets and let $\varphi: A \rightarrow B$ be an isomorphism with $\mathsf{gr}(\varphi) \subseteq \EuScript{R}$.
Then, $\varphi_{\ast}\mu_A$ is absolutely continuous with respect
to $\mu_B$ and the equation
\begin{equation}\label{xena}
\frac{d \varphi_{\ast}\mu_A}{d\mu_B}(y) = D_{\mu,\EuScript{R}}\left(\varphi^{-1}(y),y\right)
\end{equation}
holds for $\mu$-almost every $y \in B$.
\end{prop}

\begin{proof}
This proof is divided into 3 steps.

Step 1. Let us prove that given a Borel subset $B'$ of $B$ we have 
\begin{equation}\label{rellegal}
\varphi_{\ast}\mu_A (B') = \int_{C_{B'}}D_{\mu,\EuScript{R}} \,d\nu_r,
\end{equation}
where $C_{B'} = \mathsf{gr}(\varphi)\cap\pi_{r}^{-1}(B')$. Indeed, since $\varphi$ is a measurable function and $C_{B'} \subseteq \mathsf{gr}(\varphi)$, it follows that $C_{B'}$ belongs to $\mathcal{C}$ and
$\pi_l\restriction_{C_{B'}}$ is one-to-one. Then, by means of the identity $\varphi^{-1}(B') = \pi_{l}\left(C_{B'}\right)$ and Lemma \ref{vudvndsu}, we find
\begin{eqnarray*}	
\varphi_{\ast}\mu_A (B') &=& \mu\left(\varphi^{-1}(B')\right) = \mu\left(\pi_{l}(C_{B'})\right) \\
&=& \nu_{l}(C_{B'}) = \int_{C_{B'}}\frac{d\nu_{l}}{d\nu_{r}} \,d\nu_r.
\end{eqnarray*}

Step 2. Now, we claim that for every $C\in \mathcal{C}$ such that $C \subseteq \mathsf{gr}(\varphi)$, we have 
\begin{equation}\label{relnut}
\nu_r(C) = T_{\ast}\mu_B (C),
\end{equation}
where $T: B \rightarrow \EuScript{R}$ is the function given by $T(x) = (\varphi^{-1}(x),x)$ for all $x \in B$ (we left to the reader to check that $T$ is measurable).
In fact, since $\theta(C) \subseteq \theta(\mathsf{gr}(\varphi)) = \mathsf{gr}(\varphi^{-1})$, it follows that
$\pi_{l}\restriction_{\theta(C)}$ is one-to-one. Using Lemma \ref{vudvndsu}, we find
\begin{equation*}
\nu_r(C) = \nu_l\left(\theta(C)\right) = \mu\left(\pi_l(\theta(C))\right) = \mu\left(\pi_r(C)\right).
\end{equation*} 
It is easy to check that $\pi_r(C) = T^{-1}(C)$, thus (\ref{relnut}) follows.

Step 3. Let us show that for any measurable function $f : \EuScript{R} \rightarrow [-\infty, + \infty]$ such that the integral $\int_{\EuScript{R}}{f \,d\nu_r}$ exists (not necessarily a finite number),
the equation
\begin{equation}\label{ximbalaie}
\int_{C_{B'}}f\,d\nu_r = \int_{B'}{f\circ T \, d\mu_B}
\end{equation}
holds for every Borel subset $B'$ of $B$. 

First, let us consider a Borel subset $B'$ of $B$. In the case where $f = \rchi_{C}$ for some $C \in \mathcal{C}$, using equation (\ref{relnut}) and the identity $T^{-1}(C_{B'}) = B'$, we have
\begin{eqnarray*}
\int_{C_{B'}}\rchi_{C}\,d\nu_r &=& \nu_r(C\cap C_{B'}) = \mu_B\left(T^{-1}(C\cap C_{B'})\right) \\
\end{eqnarray*}
\begin{eqnarray*}
&=& \mu_B\left(T^{-1}(C)\cap B'\right) = \int_{B'}\rchi_{T^{-1}(C)}\,d\mu_B \\
&=& \int_{B'}\rchi_{C}\circ T\,d\mu_B.  
\end{eqnarray*}
The linearity of the integral and the fact that equation (\ref{ximbalaie}) holds in the case where $f$ is a characteristic function of an element of $\mathcal{C}$, we easily prove that this equation is also satisfied in the case where 
$f$ is a measurable simple function. If $f$ is a $[0,+\infty]$-valued measurable function, let $(f_n)_{n \in \mathbb{N}}$ be an increasing
sequence  of $[0,+\infty)$-valued measurable simple functions converging pointwise to $f$ on $\EuScript{R}$, then 
the monotone convergence theorem and the previous case imply that (\ref{ximbalaie}) holds. The general case follows by applying the previous case to the positive and negative parts of $f$.  

In particular, by letting $f = D_{\mu,\EuScript{R}}$ and combining equations (\ref{rellegal}) and (\ref{ximbalaie}), we obtain
\[\varphi_{\ast}\mu_A (B') = \int_{B'}{D_{\mu,\EuScript{R}}\circ T \, d\mu_B}\]
for every Borel subset $B'$ of $B$. 
\end{proof}

\begin{cor}\label{mot}
For $\mu$-almost every $z \in X$, we have 
\begin{equation}
D_{\mu,\EuScript{R}}(x,z) = D_{\mu,\EuScript{R}}(x,y) \cdot D_{\mu,\EuScript{R}}(y,z)
\end{equation} 	
for all $x,y \in \EuScript{R}(z)$.
\end{cor}

\begin{proof}
Let $G \subseteq \mathsf{Aut}(X)$ be a countable group such that $\EuScript{R} = \EuScript{R}_{G}$.
Let us show that for any two elements $\varphi, \psi \in G$, we have
\begin{equation}\label{rnrel}
D_{\mu, \EuScript{R}}\left((\varphi\circ \psi)^{-1}(w),w\right) = D_{\mu, \EuScript{R}}\left((\varphi\circ \psi)^{-1}(w),\varphi^{-1}(w)\right)\cdot D_{\mu, \EuScript{R}}\left(\varphi^{-1}(w),w\right)
\end{equation}
for $\mu$-almost every $w\in X$.
Indeed, equation (\ref{rnrel}) follows from the fact that
\[(\varphi\circ \psi)_{\ast}\mu(B) = \int_{B} D_{\mu, \EuScript{R}}\left((\varphi\circ \psi)^{-1}(w),w\right)\,d\mu(w)\]
and
\begin{eqnarray*}
(\varphi\circ \psi)_{\ast}\mu(B) &=& {\psi}_{\ast}\mu\left(\varphi^{-1}(B)\right) \\
&=& \int_{\varphi^{-1}(B)} D_{\mu, \EuScript{R}}\left(\psi^{-1}(w),w\right)\,d\mu(w)	\\
&=& \int_{B} D_{\mu, \EuScript{R}}\left(\psi^{-1}(\varphi^{-1}(w)),\varphi^{-1}(w)\right)\,d{\varphi}_{\ast}\mu(w) \\
&=& \int_{B} D_{\mu, \EuScript{R}}\left((\varphi\circ \psi)^{-1}(w),\varphi^{-1}(w)\right)\cdot D_{\mu, \EuScript{R}}\left(\varphi^{-1}(w),w\right)\,d\mu(w) \\
\end{eqnarray*}
for every Borel subset $B$ of $X$. 

Let $N$ be a $\mu$-null set such that (\ref{rnrel}) holds on $X\backslash N$, and let $z$ be an element of $X\backslash N$.
For each pair $x, y$ of points in $\EuScript{R}(z)$ we have both $(x,y)$ and $(y,z)$ in $\EuScript{R}$, then there exist two elements $\varphi$ and $\psi$ of $G$ such
that $\varphi(y) = z$ and $\psi(x) = y$. It follows that
\[D_{\mu,\EuScript{R}}(x,z) = D_{\mu,\EuScript{R}}(x,y) \cdot D_{\mu,\EuScript{R}}(y,z).\]
\end{proof}

\begin{remark}
From Corolary \ref{mot} and from the fact that $D_{\mu,\EuScript{R}} > 0$ a.e., one can easily verify that for $\mu$-almost every $z \in X$, we have
\begin{equation}\label{rncocycle}
\log D_{\mu,\EuScript{R}}(x,z) = \log D_{\mu,\EuScript{R}}(x,y) + \log D_{\mu,\EuScript{R}}(y,z)	
\end{equation}
for all $x,y \in \EuScript{R}(z)$.
\end{remark}

\section{Conformal measures}\label{prel}

In order to define the concept of a conformal measure, first we need to present the definition of a cocycle. For further references, see \cite{schmidt:97}, \cite{aaronson:07}.

\begin{mydef}
An $\EuScript{R}$-cocycle (also called an 1-cocycle of $\EuScript{R}$) is a measurable function $\phi : \EuScript{R} \rightarrow \mathbb{R}$ such that
\begin{equation}\label{cocycle}
\phi(x,z) = \phi(x,y) + \phi(y,z){}
\end{equation}
holds for all $x,y, z \in X$ satisfying $(x,y), (y,z) \in \EuScript{R}$.
\end{mydef}

\begin{remark}
It is easy to check that $\phi(x,x) = 0 $ for every $x \in X$. We also have $\phi(x,y) = -\phi(y,x)$ for each pair $(x,y) \in \EuScript{R}$.	
\end{remark}

Finally, equations (\ref{rncocycle}) and (\ref{cocycle}) motivate the following definition.

\begin{mydef}
Let $\phi : \EuScript{R} \rightarrow \mathbb{R}$ be an $\EuScript{R}$-cocycle. A Borel probability measure $\mu$ on $X$ is
called $(\phi, \EuScript{R})$-conformal if $\mu$ is quasi-invariant under $\EuScript{R}$ and the formula
\begin{equation}\label{xeena} 
D_{\mu,\EuScript{R}} = e^{-\phi} 
\end{equation}
holds almost everywhere.
\end{mydef}

The following proposition characterizes a conformal measure in terms of a group $G$ (provided by Theorem \ref{fmteo}) which generates the relation $\EuScript{R}$. 
This result will be usefull in the following sections.

\begin{prop}\label{haaa}
Let $G \subseteq \mathsf{Aut(}X\mathsf{)}$ be a countable group which generates $\EuScript{R}$. Then, a Borel probability measure $\mu$ on $X$ is 
$(\phi, \EuScript{R})$-conformal if and only if for each $\varphi \in G$ the measure $\varphi_{\ast}\mu$ is absolutely continuous with respect to $\mu$ and 
the equation
\begin{equation}\label{xuxa}
\frac{d \varphi_{\ast}\mu}{d \mu}(x) = e^{\phi(x,\varphi^{-1}(x))} 	
\end{equation}
holds for $\mu$-almost every $x \in X$. 
\end{prop}

\begin{proof}
First, suppose that $\mu$ is $(\phi,\EuScript{R})$-conformal. Let us consider an element $\varphi$ of $G$. Proposition \ref{boraut} implies that $\varphi_{\ast}\mu$ is
absolutely continuous with respect to $\mu$ and there is a $\mu$-null subset $N$ of $X$ such that
\[\frac{d \varphi_{\ast}\mu}{d\mu}(x) = D_{\mu,\EuScript{R}}\left(\varphi^{-1}(x),x\right)\]
holds for all $x \in X \backslash N$.
We also let $C_0$ be an element of $\mathcal{C}$ such that $\mu\left(\pi_r(C_0)\right) = 0$ and (\ref{xeena}) holds at each point of $\EuScript{R}\backslash C_0$.
If we define $N_0 = \pi_r(C_0) \cup N$, then $N_0$ is also a $\mu$-null subset of $X$ and for each point $x$ in $X\backslash N_0$ we have 
$(\varphi^{-1}(x),x) \in \EuScript{R}\backslash C_0$ and $x \in X \backslash N$. Therefore, we have
\[\frac{d \varphi_{\ast}\mu}{d\mu}(x) = e^{-\phi(\varphi^{-1}(x),x)} = e^{\phi(x,\varphi^{-1}(x))}\]
for every $x \in X\backslash N_0$.

On the other hand, let us show that $\mu$ is quasi-invariant under $\EuScript{R}$ and satisfies (\ref{xeena}). Given a  Borel subset $A$ of $X$ such that 
$\mu(A) = 0 $, we have $\varphi_{\ast}\mu(A) = 0$ for each $\varphi \in G$. It follows that $\mu\left(\EuScript{R}(A)\right) = \mu\left(\bigcup\limits_{\varphi \in G}\varphi^{-1}(A)\right) = 0$,
thus $\mu$ is quasi-invariant under $\EuScript{R}$.
Our hypotheses and Proposition \ref{boraut} imply that for each $\varphi \in G$ we have
\begin{equation}\label{psico}
D_{\mu,\EuScript{R}}\left(\varphi^{-1}(y),y\right) = e^{-\phi(\varphi^{-1}(y),y)}
\end{equation}
for $\mu$-almost every $y \in X$. Let $N$ be a $\mu$-null set such that (\ref{psico}) is satisfied for every $\varphi \in G$ at each point $y$ of $X\backslash N$.
If we define $C_0 = \pi_{r}^{-1}(N)$, then it follows that $C_0$ is a $\nu_r$-null set and for all $(x,y) \in \EuScript{R}\backslash C_0$ we have
\[D_{\mu,\EuScript{R}}(x,y) = e^{-\phi(x,y)},\]
since there exists an element $\varphi$ of $G$ such that $x= \varphi^{-1}(y)$.
\end{proof}

%%%%%%%%%%%%%%%%%%%%%%%%%%%%%%%%%%%%%%%%%%%%%%%%%%%%%%%%%%%%%%%%%%%%%%%%%%%%%%%%%%%%%%%%%%%%%%%%%%%%%%%%%%%%%%%%%%%%%%%%%%%%%%%%%%%
%%%%%%%%%%%%%%%%%%%%%%%%%%%%%%%%%%%%%%%%%%%%%%%%%%%%%%%%%%%%%%%%%%%%%%%%%%%%%%%%%%%%%%%%%%%%%%%%%%%%%%%%%%%%%%%%%%%%%%%%%%%%%%%%%%%
%%%%%%%%%%%%%%%%%%%%%%%%%%%%%%%%%%%%%%%%%%%%%%%%%%%%%%%%%%%%%%%%%%%%%%%%%%%%%%%%%%%%%%%%%%%%%%%%%%%%%%%%%%%%%%%%%%%%%%%%%%%%%%%%%%%
%%%%%%%%%%%%%%%%%%%%%%%%%%%%%%%%%%%%%%%%%%%%%%%%%%%%%%%%%%%%%%%%%%%%%%%%%%%%%%%%%%%%%%%%%%%%%%%%%%%%%%%%%%%%%%%%%%%%%%%%%%%%%%%%%%%
%%%%%%%%%%%%%%%%%%%%%%%%%%%%%%%%%%%%%%%%%%%%%%%%%%%%%%%%%%%%%%%%%%%%%%%%%%%%%%%%%%%%%%%%%%%%%%%%%%%%%%%%%%%%%%%%%%%%%%%%%%%%%%%%%%%
%%%%%%%%%%%%%%%%%%%%%%%%%%%%%%%%%%%%%%%%%%%%%%%%%%%%%%%%%%%%%%%%%%%%%%%%%%%%%%%%%%%%%%%%%%%%%%%%%%%%%%%%%%%%%%%%%%%%%%%%%%%%%%%%%%%
         % associado ao arquivo: 'cap-conceitos.tex'
%% ------------------------------------------------------------------------- %%
\chapter{Gibbs measures for subshifts}
\label{cap:gibbs}

In this chapter we are finally able to begin the study of Gibbs measures on subshifts, introduced in \cite{meyerovitch:13}, \cite{aaronson:07}, \cite{schmidt:97}, \cite{schpet}.

We restrict ourselves to the study of Gibbs measures for a specific class of functions defined on a subshift, the so-called
functions with $d$-summable variation (\cite{meyerovitch:13}) or regular local energy functions (\cite{keller:98}, \cite{muir}).
Then, we devote the first section to show a few properties of these potentials. Later, based on our 
knowledge on conformal measures, we introduce two different definitions of Gibbs measures provided by \cite{meyerovitch:13}, and
show that both definitions coincide for SFTs.  
Differently from the usual approach, these definitions does not involve conditional expectations, due to this fact,
we dedicate the last section to connect them with other definitions frequently presented in the literature (e.g. \cite{cap:76},\cite{georgii:11},\cite{ny:08},\cite{sarig:09}).  

\section{Potentials}\label{pot}

Let us begin by introducing some notation. From now on, we will always let $X$ denote a nonempty subshift of $\ds$ and let $T$ denote the shift action of $\zd$ on $X$.
If $\Delta$ is an arbitrary subset of $\zd$, then we define the set of all $\Delta$-configurations permited on $X$ by $X_\Delta \defeq \{x_\Delta : x \in X\}$.
And, given a finite subset $\Lambda$ of $\zd$ and a pattern $\omega \in \mathcal{A}^{\Lambda}$, we define the cylinder with configuration $\omega$ as the subset of $X$ given by $[\omega] \defeq \{x \in X : x_\Lambda = \omega\}$. It is easy to check that
every cylinder is a clopen (i.e., open and closed) subset of $X$. 

The set of all functions with $d$-summable variation on $X$ is defined as follows. Given an arbitrary real-valued function $f$ defined on $X$ and a positive integer $n$, we define the $n$-th variation of $f$ as the nonnegative extended real number given by   
\begin{equation}
\delta_n (f) \defeq \sup\left\{\left|f(x)-f(y)\right| : x, y \in X \;\text{satisfy}\; x_{\Lambda_{n}} = y_{\Lambda_{n}}\right\}.
\end{equation}
Then, let us define the set of all functions with $d$-summable variation on $X$  by

\begin{equation}
SV_{d}(X) \defeq \left\{f \in \mathbb{R}^{X} : \sum\limits_{n =1}^{\infty} n^{d-1}\delta_{n}(f) < +\infty\right\}.	
\end{equation}

\begin{remark}
For each $f \in SV_{d}(X)$ we have $\lim\limits_{n \to \infty} \delta_{n}(f) = 0$. It follows that every 
function with $d$-summable variation is uniformly continuous. 
%Hence $SV_{d}(X) \subseteq C(X)$, where $C(X)$ denotes the set of all real-valued continuous functions on $X$.
\end{remark}

\begin{ex}
Let $X \subseteq \ds$ be a subshift and let $\rho$ be the metric on $\ds$ defined by $(\ref{metric})$. A function $f : X \rightarrow \mathbb{R}$ is said to be H\" older continuous if there are positive numbers $L$ and $h$ such that
\[|f(x) - f(y)| \leq L \cdot \rho(x,y)^{h}\]  	
holds for each $x$ and $y$ in $X$. It is straightforward to show that every H\" older continuous function on $X$ belongs to $SV_{d}(X)$.
\end{ex}

\begin{ex}
Let us consider the full shift $X = \{-1,+1\}^{\mathbb{Z}}$ and the function $f:X \rightarrow \mathbb{R}$ given by
\[f(x) = \sum\limits_{i=1}^{\infty}\frac{x_{0}x_{i}}{i^{2+\epsilon}} + \sum\limits_{i=1}^{\infty}\frac{x_{0}x_{-i}}{i^{2+\epsilon}},\]
where $\epsilon$ is a positive real number. It is easy to check that $\delta_{n}(f) = \sum\limits_{i = n}^{\infty}\frac{4}{i^{2+\epsilon}}$ for each $n$. 
Therefore, it follows from
\[\sum\limits_{n = 1}^{\infty} \delta_{n}(f) = \sum\limits_{n = 1}^{\infty}\sum\limits_{i = n}^{\infty} \frac{4}{i^{2+\epsilon}} = \sum\limits_{i = 1}^{\infty}\sum\limits_{n = 1}^{i} \frac{4}{i^{2+\epsilon}} = \sum\limits_{i = 1}^{\infty}\frac{4}{i^{1+\epsilon}} < +\infty\]
that $f$ has summable variation. 
\end{ex}

It is straightforward to check that $SV_{d}(X)$ is a real vector space with respect to the usual operations of addition of functions and multiplication by scalar.
And also, one can define a norm on this space by letting
\begin{equation}
\|f\|_{SV_{d}} = \|f\|_{\infty} + \sum\limits_{n =1}^{\infty} n^{d-1}\delta_{n}(f)\quad \;\text{for each}\; f \in SV_{d}(X).
\end{equation}

The following results will be used only in Section \ref{geq}, so they might be skipped at a first reading. 
\begin{prop}
The pair $\left(SV_d(X),\|\cdot\|_{SV_d}\right)$ is a Banach space.
\end{prop}

\begin{proof}
Let $(f_m)_{m \in \mathbb{N}}$ be a Cauchy sequence in $SV_d(X)$. For each $\epsilon > 0$ there is a positive integer $m_0$ such that
$\|f_m - f_{m'}\|_{SV_d} < \frac{\epsilon}{2}$ holds whenever $m \geq m_{0}$ and $m' \geq m_0$. Since $\|f_m - f_{m'}\|_{\infty} \leq \|f_m - f_{m'}\|_{SV_d}$, it follows
that $(f_m)_{m \in \mathbb{N}}$ is a Cauchy sequence with respect to $\|\cdot\|_{\infty}$, so, it converges to some continuous function $f$ with
respect to this norm. Then, for each $m \geq m_0$  and each $N \geq 1$, if we choose a positive integer $m'$ such that $m' \geq m_{0}$ and $(2N^{d}+1)\cdot\|f-f_{m'}\|_{\infty} < \frac{\epsilon}{2}$, we have  

\begin{eqnarray*}
\|f - f_{m}\|_{\infty}+\sum\limits_{n=1}^{N}n^{d-1}\delta_{n}(f - f_{m}) &\leq& \|f_{m} - f_{m'}\|_{\infty} + \sum\limits_{n=1}^{N}n^{d-1}\delta_{n}(f_m-f_{m'}) \\&&+ \|f - f_{m'}\|_{\infty} + \sum\limits_{n=1}^{N}n^{d-1}\delta_{n}(f-f_{m'})\\
&\leq&  \|f_m - f_{m'}\|_{SV_d} + (2N^{d}+1)\cdot\|f-f_{m'}\|_{\infty} \\
&<& \epsilon.
\end{eqnarray*}
Therefore, we have
\begin{equation}
\|f - f_{m}\|_{\infty}+\sum\limits_{n=1}^{\infty}n^{d-1}\delta_{n}(f - f_{m}) \leq \epsilon	
\end{equation}
whenever $m \geq m_{0}$. One can easily prove that $f \in SV_{d}(X)$, thus the result follows.
\end{proof}

In the following, we introduce a special subset of $SV_{d}(X)$ which will play an important role in the proof of Theorem \ref{mey}. Let us define the set of all local functions on $X$ by

\[\textnormal{Loc}(X) \defeq \bigcup_{\substack{\Lambda \subseteq \zd \\ \Lambda \;\text{finite}}}\big\{f \in \mathbb{R}^X : f(x) \;\text{depends only on}\; x_{\Lambda}\big\},\] 
where ``$f(x)$ depends only on $x_\Lambda$'' means that if $x$ and $y$ are two elements of $X$ satisfying 
$x_\Lambda = y_\Lambda$, then $f(x) = f(y)$. 
We claim that $\textnormal{Loc}(X) \subseteq SV_d(X)$. Indeed, let $f$ be a local function on $X$ such that $f(x)$ depends only on $x_{\Lambda}$, where $\Lambda$ is a finite subset of $\zd$. By letting $n_{0}$ be a positive integer such that
$\Lambda_{n_{0}} \supseteq \Lambda$, we have $\delta_{n}(f) = 0$ for every $n \geq n_0$. Hence
\[\sum\limits_{n=1}^{\infty}n^{d-1}\delta_{n}(f) = \sum\limits_{n=1}^{n_0 - 1}n^{d-1}\delta_{n}(f) < +\infty.\]

\begin{ex}[Continuation of Example \ref{ising1}]\label{ising2}
Let us consider again the full shift $X = \{-1,+1\}^{\zd}$ and define $\|i\|_{1} \defeq \sum\limits_{n = 1}^{d}|i_n|$ for each $i \in \zd$. In statistical mechanics, we often say that two sites $i, j \in \zd$ are nearest neighbours
if $\|i-j\|_{1} = 1$, which we denote by $i \sim j$.
Given two parameters $J, h \in \mathbb{R}$, we immediately see that the function $f^{J,h} : X \rightarrow \mathbb{R}$ defined by
\begin{equation}
f^{J,h}(x) = \frac{J}{2} \sum\limits_{j\, \sim\, \bf{0}}x_{\bf{0}}x_j  + h x_{\bf{0}}
\end{equation}
belongs to $\textnormal{Loc}(X)$. This function describes the interaction energy of the spin located at the origin of the $d$-dimensional integer lattice $\zd$, which interacts (with a coupling constant $J$) with its neighbours and with an external field $h$.
\end{ex}

\begin{prop}
The set $\textnormal{Loc}(X)$ is dense in $SV_d(X)$.
\end{prop}
\begin{proof}
Given an arbitrary function $f$ in $SV_d(X)$, let us define a sequence $(f_m)_{m \in \mathbb{N}}$ in $\textnormal{Loc}(X)$ as follows. We define $f_{m}$ by letting
\[f_m(x) = \sup\limits_{\substack{y \in X \\ y_{\Lambda_m} = x_{\Lambda_m}}}f(y)\]
for each $x$ in $X$.
Observe that the supremum above is a real number, since the set in which it is taken is nonempty and bounded
above by $\|f\|_\infty$. It is easy to check that $f_m(x)$ depends only on $x_{\Lambda_m}$.

First, let us show that 
\begin{equation}\label{ihrapaz}
\lim\limits_{m \to \infty}\|f-f_m\|_\infty = 0.
\end{equation}
Indeed, for each positive integer $m$, we have
\begin{eqnarray*}
\|f-f_m\|_\infty &=& \sup\limits_{x \in X}\left|\,f(x) - \sup\limits_{\substack{y \in X \\ y_{\Lambda_m} = x_{\Lambda_m}}}f(y)\,\right| = \sup\limits_{x \in X}\left|\,\sup\limits_{\substack{y \in X \\ y_{\Lambda_m} = x_{\Lambda_m}}}\left(f(y)-f(x)\right)\,\right| \\
&\leq& \sup\limits_{x \in X}\sup\limits_{\substack{y \in X \\ y_{\Lambda_m} = x_{\Lambda_m}}}\left|f(y)-f(x)\right| \\
&\leq& \delta_m(f).
\end{eqnarray*}
Thus, equation (\ref{ihrapaz}) follows by using the fact that $\lim\limits_{m \to \infty}\delta_{m}(f) = 0$.

Now, let us prove that
\begin{equation}\label{rapazdoceu}
\delta_n(f-f_m) = \delta_n(f) 
\end{equation}
holds for every $n \geq m \geq 1$, and
\begin{equation}\label{ihrapaz2}
\delta_n(f-f_m) \leq 2 \delta_n(f) 
\end{equation}
holds for every $m \geq n \geq 1$. The reader can easily check that equation (\ref{rapazdoceu}) follows from the fact that $f_{m}$ depends only on $\Lambda_{m}$. Now, if we suppose that $m \geq n \geq 1$, 
then inequality (\ref{ihrapaz2}) follows from
\[\delta_n(f-f_m) \leq \delta_n(f) + \delta_n(f_m)\] 
and 
\begin{eqnarray*}
\delta_n(f_m) &=& \sup\limits_{\substack{x,y \in X \\ x_{\Lambda_n} = y_{\Lambda_n}}}\left|f_m(x)-f_m(y)\right| = \sup\limits_{\substack{x,y \in X \\ x_{\Lambda_n} = y_{\Lambda_n}}}\left|\,\sup\limits_{\substack{x' \in X \\ x'_{\Lambda_m} = x_{\Lambda_m}}}\left(f(x')-f_m(y)\right)\,\right| \\
&\leq&\sup\limits_{\substack{x,y \in X \\ x_{\Lambda_n} = y_{\Lambda_n}}}\sup\limits_{\substack{x' \in X \\ x'_{\Lambda_m} = x_{\Lambda_m}}}\left|f(x')-f_m(y)\right| = \sup\limits_{\substack{x,y \in X \\ x_{\Lambda_n} = y_{\Lambda_n}}}\sup\limits_{\substack{x' \in X \\ x'_{\Lambda_m} = x_{\Lambda_m}}}\left|f_m(y)-f(x')\right| \\
&=& \sup\limits_{\substack{x,y \in X \\ x_{\Lambda_n} = y_{\Lambda_n}}}\sup\limits_{\substack{x' \in X \\ x'_{\Lambda_m} = x_{\Lambda_m}}}\left|\,\sup\limits_{\substack{y' \in X \\ y'_{\Lambda_{m}} = y_{\Lambda_{m}}}}\left(f(y')-f(x')\right)\,\right| \\
\end{eqnarray*}
\begin{eqnarray*}
&\leq& \sup\limits_{\substack{x,y \in X \\ x_{\Lambda_n} = y_{\Lambda_n}}}\sup\limits_{\substack{x' \in X \\ x'_{\Lambda_m} = x_{\Lambda_m}}}\sup\limits_{\substack{y' \in X \\ y'_{\Lambda_{m}} = y_{\Lambda_{m}}}}\left|f(y')-f(x')\right| \\
&\leq& \sup\limits_{\substack{x',y' \in X \\ x'_{\Lambda_{n}} = y'_{\Lambda_{n}}}}\big|f(x')-f(y')\big| = \delta_n(f).
\end{eqnarray*}

Finally, let us show that the sequence $(f_{m})_{m \in \mathbb{N}}$ converges to $f$. Since $f$ belongs to $SV_{d}(X)$, it follows that for every $\epsilon>0$ there is a positive integer $n_0$ such that for all $N > n_{0}$ we have $\sum\limits_{n=n_0+1}^{N}n^{d-1}\delta_{n}(f)<\frac{\epsilon}{4}$. 
Using equation (\ref{ihrapaz}), we can choose a positive integer $m_{0}$ such that $m_{0} > n_{0}$ and $(2n_0^d+1)\cdot\|f-f_m\|_{\infty}<\frac{\epsilon}{2}$ holds whenever $m\geq m_{0}$. Then, for all integers $m$ and $N$ satisfying
$N > m \geq m_0$, by using equations (\ref{rapazdoceu}) and (\ref{ihrapaz2}), we find

\begin{eqnarray*}
\|f-f_m\|_{\infty} + \sum\limits_{n=1}^{N}n^{d-1}\delta_{n}(f-f_m) &=& \|f-f_m\|_{\infty} + \sum\limits_{n = 1}^{n_0}n^{d-1}\delta_{n}(f-f_{m}) \\ 
&& + \sum\limits_{n=n_0+1}^{m}n^{d-1}\underbrace{\delta_{n}(f-f_m)}_{\leq\, 2\delta_{n}(f)} + \sum\limits_{n=m+1}^{N}n^{d-1}\underbrace{\delta_{n}(f-f_m)}_{= \,\delta_{n}(f) \,\leq\, 2\delta_{n}(f)} \\
&\leq& (2n_{0}^d+1)\cdot \|f-f_m\|_{\infty} + 2 \sum\limits_{n = n_0 + 1}^{N} n^{d-1}\delta_{n}(f) \\
&<& \epsilon.
\end{eqnarray*}
Therefore, we finally conclude that
\[\|f-f_m\|_{SV_d} = \|f-f_m\|_{\infty} + \sum\limits_{n=1}^{\infty}n^{d-1}\delta_{n}(f-f_m) \leq \epsilon\]
holds whenever $m$ is a positive integer satisfying $m \geq m_0$.
\end{proof}

\begin{cor}
The space $SV_d(X)$ is separable.
\end{cor}
\begin{proof}
Let us define the set
\[\textnormal{Loc}^\mathbb{Q}(X) \defeq \bigcup_{\substack{\Lambda \subseteq \zd \\ \Lambda\;\text{finite}}}\left\{f \in \mathbb{Q}^X : f(x) \;\text{depends only on}\; x_{\Lambda}\right\}.\] 
We claim that $\textnormal{Loc}^\mathbb{Q}(X)$ is a countable subset of $\textnormal{Loc}(X)$. In fact, first note that the set $\{\Lambda \subseteq \zd : \Lambda \;\text{is a finite set}\}$ is countable. And, for each finite subset $\Lambda$ of $\zd$, the set
$\left\{f \in \mathbb{Q}^X : f(x) \;\text{depends only on}\; x_{\Lambda}\right\}$ is also countable, since
there is a natural one-to-one map from this set onto $\mathbb{Q}^{X_\Lambda}$.

Let $f$ be an arbitrary element of $SV_d(X)$. For each $\epsilon >0$ there is a function $g \in \textnormal{Loc}(X	)$ such that $\|f-g\|_{SV_d}<\frac{\epsilon}{2}$.
Since $g$ is a local function, without loss of generality we can suppose that $g(x)$ depends only on $x_{\Lambda_N}$ for some positive integer $N$. It follows that $g$ can be written in the form
\[g = \sum\limits_{\omega \in X_{\Lambda_N}}y(\omega) \cdot\rchi_{[\omega]},\]
where each $y(\omega)$ belongs to $\mathbb{R}$. Thus, for each $\omega$ in $X_{\Lambda_N}$, let us choose a rational number $\widetilde{y}(\omega)$ such that $(2N^{d}+1)\cdot|y(\omega)-\widetilde{y}(\omega)|<\frac{\epsilon}{2}$, and let $\widetilde{g}$ be a function on $X$ defined by
\[\widetilde{g} = \sum\limits_{\omega \in X_{\Lambda_N}}\widetilde{y}(\omega) \cdot\rchi_{[\omega]}.\]
Note that $\widetilde{g}$ belongs to $\textnormal{Loc}^\mathbb{Q}\textnormal{(}X\textnormal{)}$ and 
\begin{eqnarray*}
\|g-\widetilde{g}\|_{SV_d} &=& \|g - \widetilde{g}\|_{\infty} + \sum\limits_{n=1}^{\infty}n^{d-1}\delta_{n}(g-\widetilde{g}) \\
&=& \|g - \widetilde{g}\|_{\infty} + \sum\limits_{n=1}^{N}n^{d-1}\delta_{n}(g-\widetilde{g}) \\
& \leq & (2 N^{d}+ 1) \cdot\|g-\widetilde{g}\|_{\infty} < \frac{\epsilon}{2}.
\end{eqnarray*}	
Therefore, we conclude that $\|f-\widetilde{g}\|_{SV_d} \leq \|f-g\|_{SV_d} + \|g-\widetilde{g}\|_{SV_d} < \epsilon$.
\end{proof}

\section{Gibbs measures}\label{lalala}

In this section, we finally begin the study of Gibbs measures for subshifts, introduced by Meyerovitch \cite{meyerovitch:13}, Aaronson and Nakada \cite{aaronson:07}, Schmidt \cite{schmidt:97}, Petersen and Schmidt \cite{schpet}. In Meyerovitch's paper 
was given two different definitions for Gibbs measures, however, we will show
at the end of this section that both definitions coincide for subshifts of finite type. Although these definitions differ from the usual presented in the literature (cf. \cite{georgii:11}, \cite{ruelle:04},\cite{biss:12}, \cite{ny:08}, \cite{sarig:09}) 
since they do not involve conditional expectations, we will show in Section \ref{eq} that
they can be formulated, as usual, in terms of the so-called DLR equations. In addition, these definitions are closely related to another one provided by
Capocaccia \cite{cap:76}, in the sense that all these definitions coincide for subshifts of finite type.

Recall that a subshift $X$ of $\ds$ is a compact metrizable space and the shift action $T$ is an expansive continuous action of $\zd$ on $X$. Then, let $\gr$ be the Gibbs relation of $(X,T)$ given by
\begin{equation}
\gr = \big\{(x,y) \in X \times X : x_{\Lambda^c} = y_{\Lambda^c}\;\text{for some}\;\Lambda \subseteq \zd\; \text{finite}\big\}	
\end{equation}
(see Examples \ref{gr} and \ref{grshift}). Before we proceed to the next definition, we need to prove the following result.

\begin{lemma}\label{eita}
Let $f \in SV_d(X)$ and let $x,y \in X$ such that $x_{\Lambda_{m}^c} = y_{\Lambda_{m}^c}$ for some $m \in \mathbb{N}$.
Then, we have $\lim\limits_{N \to \infty} \sum\limits_{k \in \Lambda_N} |f(T^kx)-f(T^ky)| \leq 2|\Lambda_{m+1}| \cdot \|f\|_{SV_d}$.
\end{lemma}
\begin{proof}
First, let us show that the inequality
\begin{equation}\label{in}
\left|\left\{k \in \zd : \|k\| = n\right\}\right| \leq \frac{2|\Lambda_{m+1}|}{m^{d-1}}\,n^{d-1}
\end{equation}
holds whenever $n$ is a positive integer satisfying $n \geq m$. In fact, under this condition we have
\begin{eqnarray*}
\left|\left\{k \in \zd : \|k\| = n\right\}\right| & = & \left|\left\{k \in \zd : \|k\| \leq n\right\}\right| - \left|\left\{k \in \zd : \|k\| \leq n-1\right\}\right| \\
& = & (2n+1)^d - (2n-1)^d = \sum\limits_{l=0}^{d}\binom{d}{l}(2n)^{d-l}\left(1-(-1)^l\right) \\
& = & n^{d-1} \sum\limits_{l=0}^{d}\binom{d}{l} 2^{d-l} n^{1-l} \left(1-(-1)^l\right) \\
& \leq &  n^{d-1} \sum\limits_{l=0}^{d}\binom{d}{l} 2^{d-l} m^{1-l} \left(1-(-1)^l\right) \\
& \leq & \frac{2n^{d-1}}{m^{d-1}} \sum\limits_{l=0}^{d}\binom{d}{l} {\left(2m\right)}^{d-l} \\
& = & \frac{2|\Lambda_{m + 1}|}{m^{d-1}}\,n^{d-1}. 
\end{eqnarray*}
Now, given an integer $N$ such that $N > m$, it follows from inequality (\ref{in}) that
\begin{eqnarray*}
\sum\limits_{k \in \Lambda_N}|f(T^kx)-f(T^ky)| & = & \sum\limits_{k \in \Lambda_{m}}|f(T^kx)-f(T^ky)| + \sum\limits_{k \in \Lambda_N \backslash \Lambda_{m}}|f(T^kx)-f(T^ky)| \\
& \leq & 2|\Lambda_{m}| \cdot \|f\|_{\infty} + \sum\limits_{n = m}^{N-1} \sum_{\substack{k \in \zd \\ \|k\|=n}} |f(T^kx)-f(T^ky)| \\ 
& \leq & 2|\Lambda_{m+1}| \cdot \|f\|_{\infty} + \sum\limits_{n = m}^{N-1} \sum_{\substack{k \in \zd \\ \|k\|=n}} \delta_{\|k\| - (m-1)}(f) \\ 
& \leq & 2|\Lambda_{m+1}| \cdot \|f\|_{\infty} + \sum\limits_{n = m}^{N-1} \left|\left\{k \in \zd : \|k\| = n\right\}\right| \cdot \delta_{n - (m-1)}(f)\\
& \leq & 2|\Lambda_{m+1}| \cdot \|f\|_{\infty} + 2|\Lambda_{m + 1}|\cdot \sum\limits_{n = m}^{N-1} \left(\frac{n}{m}\right)^{d-1} \delta_{n - (m-1)}(f)\\
& = & 2|\Lambda_{m+1}| \cdot \|f\|_{\infty} + 2|\Lambda_{m + 1}|\cdot \sum\limits_{n = 1}^{N-m} \underbrace{\left(\frac{n+(m-1)}{m}\right)^{d-1}}_{\leq \,n^{d-1}} \delta_{n}(f). 
\end{eqnarray*}

Therefore, the result follows since the inequality
\begin{eqnarray*}
\sum\limits_{k \in \Lambda_N}|f(T^kx)-f(T^ky)| \leq 2|\Lambda_{m+1}| \cdot \|f\|_{SV_d}
\end{eqnarray*}
holds whenever $N > m$.
\end{proof}

As we previously mentioned, we will define a Gibbs measure for a function $f$ with $d$-summable variation on $X$ as a special kind of conformal measure. In order to do so, for each $f$ in $SV_{d}(X)$ let
us introduce a
$\gr$-cocycle associated to it.

\begin{mydef}
Given a function $f$ in $SV_d(X)$, we define the map $\phi_{f} : \gr \rightarrow \mathbb{R}$ by
\begin{equation}
\phi_{f}(x,y) = \sum\limits_{k \in \zd}f(T^k y) - f(T^k x),
\end{equation}
where the sum above is an unordered sum (see Apendix \ref{unsum}).
\end{mydef}

\begin{remark}
\begin{enumerate}[label=(\alph*),ref=\alph*]
\item It is easy to prove that $\phi_{f}$ is well defined. In fact, for each pair $(x,y)$ in $\gr$, without loss of generality we may assume that there is a positive integer $m$ 
such that $x_{\Lambda_{m}^{c}} = y_{\Lambda_{m}^{c}}$. It follows from Lemma \ref{eita} that $\lim\limits_{N \to \infty} \sum\limits_{k \in \Lambda_N} |f(T^k y)-f(T^k x)| \leq 2|\Lambda_{m+1}| \cdot \|f\|_{SV_d} < +\infty$,
and so according to Corolary \ref{svdap} (see Appendix \ref{ape}) the unordered sum $\sum\limits_{k \in \zd}f(T^k y) - f(T^k x)$ converges to a real number.

\item Let us prove that $\phi_{f}$ is in fact a $\gr$-cocycle. Since
\[\phi_{f}(x,y) = \lim\limits_{N \to \infty}\sum\limits_{k \in \Lambda_N}f(T^k y) - f(T^k x)\] 
for every pair $(x,y)$ in $\gr$,
then $\phi_{f}$ is a pointwise limit of a sequence of measurable functions on $\gr$. Thus the measurability of $\phi_{f}$ follows. 
Furthermore, for all pairs $(x,y)$ and $(y,z)$ in $\gr$, we have
\begin{eqnarray*}
\phi_{f}(x,z) &=& \lim\limits_{N \to \infty}\sum\limits_{k \in \Lambda_N}f(T^k z) - f(T^k x) \\
&=& \lim\limits_{N \to \infty}\sum\limits_{k \in \Lambda_N}f(T^k y) - f(T^k x) +\lim\limits_{N \to \infty}\sum\limits_{k \in \Lambda_N}f(T^k z) - f(T^k y) \\
&=& \phi_{f}(x,y) + \phi_{f}(y,z).
\end{eqnarray*}
\end{enumerate}
\end{remark}

\begin{mydef}[Gibbs measure]\label{gibbsm}
A Borel probability measure $\mu$ on $X$ is called a Gibbs measure for a function $f \in SV_d(X)$ 
if it is $(\phi_{f}, \gr)$-conformal.
\end{mydef}

Due to its technical difficulty, it can be very hard to deal with proofs where Gibbs measures are involved unless the generators of $\gr$ are known (see Proposition \ref{haaa}).
Then, let us introduce a subrelation of $\gr$, called topological Gibbs relation, and derive a weaker definition of a Gibbs measure which is easier to 
handle. We will show later that 
both definitions coincide in the case where $X$ is a subshift of finite type.	
%The main result presented in this work will be formulated in terms of a weaker notion of Gibbs measures, however, as will be shown on Chapter \ref{cap:rp} both notions
%coincides in which case where $X$ is a SFT.

Let us introduce the set
\begin{eqnarray*}
\mathcal{F}(X) \qquad \qquad \qquad \qquad \qquad \qquad \qquad \qquad \qquad \qquad \qquad \qquad \qquad \qquad \qquad \qquad \qquad \quad \;\;\;\\
\quad\;\;\;\,\defeq \left\{\varphi \in \mathsf{Homeo(}X{\mathsf{)}}: \text{exists a positive integer}\; n \;\text{such that}\; \varphi(x)_{{\Lambda}_{n}^{c}}=x_{{\Lambda}_{n}^{c}}\; \text{for all}\; x \in X\right\}.	
\end{eqnarray*}
It is straightforward to check that $\mathcal{F}(X)$ is a group of Borel automorphisms of $X$ with respect to the operation of composition of functions. 
The topological Gibbs relation will be defined as the equivalence relation generated by $\mathcal{F}(X)$ (see Example \ref{group}), but in order to do that, we need to show that
$\mathcal{F}(X)$ is countable. 

\begin{lemma}\label{satanasjr}
Given an arbitrary element $\varphi$ of $\mathcal{F}(X)$, there is a positive integer $n$ such that
\begin{itemize}
\item[(a)] the equality $\varphi (x)_{\Lambda_{n}^c}=x_{\Lambda_{n}^c}$ holds for every $x$ in $X$, and 
\item[(b)] for each pair $x,y$ of points in $X$ we have
$x_{\Lambda_{n}}=y_{\Lambda_{n}}$ if and only if $\varphi (x)_{\Lambda_{n}}=\varphi (y)_{\Lambda_{n}}$. 
\end{itemize}	
\end{lemma}

\begin{proof}
Let $m$ be a positive integer such that $\varphi (x)_{\Lambda_{m}^c}= x_{\Lambda_{m}^c}$ holds for every $x$ in $X$. 
Since $\varphi$ and $\varphi^{-1}$ are continuous functions on $X$, by compactness, it follows that both functions are uniformly continuous. Then, 
there is an integer $n \geq m$ such that for all points $x$ and $y$ in $X$ satisfying
$x_{\Lambda_{n}}=y_{\Lambda_{n}}$ we have $\varphi (x)_{\Lambda_{m}}=\varphi (y)_{\Lambda_{m}}$
and $\varphi^{-1} (x)_{\Lambda_{m}}=\varphi^{-1} (y)_{\Lambda_{m}}$.

Note that part (a) follows from the fact that $\Lambda_{n}^{c} \subseteq \Lambda_{m}^{c}$.
On the other hand, for any two elements $x$ and $y$ of $X$ such that
$x_{\Lambda_{n}}=y_{\Lambda_{n}}$ we have  
$\varphi (x)_{\Lambda_{n} \backslash \Lambda_{m}}= x_{\Lambda_{n} \backslash \Lambda_{m}} = y_{\Lambda_{n} \backslash \Lambda_{m}}=\varphi (y)_{\Lambda_{n} \backslash \Lambda_{m}}$.
Thus, the equality $\varphi (x)_{\Lambda_{n}}=\varphi (y)_{\Lambda_{n}}$ holds whenever $x$ and $y$ are of elements of $X$ that satisfy $x_{\Lambda_{n}}=y_{\Lambda_{n}}$.
One can easily prove an analogous result for $\varphi^{-1}$.
Therefore, part (b) follows.
\end{proof}

Let us write 
\[\mathcal{F}(X) = \bigcup\limits_{n \in \mathbb{N}}\mathcal{F}_n(X),\]
where $\mathcal{F}_n(X)$ is the set of all homeomorphisms $\varphi$ of $X$ satisfiying items (a) and (b) from Lemma \ref{satanasjr}.
\begin{remark}\label{fnx}
\begin{enumerate}[label=(\alph*),ref=\alph*]
\item \label{fnfnfn}It is easy to verify that $\mathcal{F}_{n}(X) \subseteq \mathcal{F}_{n+1}(X)$ for every $n \in \mathbb{N}$. 

\item Let us show that each $\mathcal{F}_n(X)$ is a finite set, and finally conclude that $\mathcal{F}(X)$ is countable. Given an element $\varphi$ of $\mathcal{F}_{n}(X)$, let us 
define a function $\widetilde{\varphi} : X_{\Lambda_{n}} \rightarrow X_{\Lambda_{n}}$ as follows. For each $\omega$ in $X_{\Lambda_{n}}$ we let $\widetilde{\varphi}(\omega) = \varphi(x)_{\Lambda_{n}}$, 
where $x$ is an arbitrarily choosen element of $[\omega]$ (note that $\widetilde{\varphi}$ is well defined, since $[\omega] \neq \emptyset$ and the value of $\widetilde{\varphi}(\omega)$ does not depends on the 
choice of the element $x$ of $[\omega]$). One can easily check that the mapping $\varphi \mapsto \widetilde{\varphi}$ establishes a one-to-one correspondence between $\mathcal{F}_{n}(X)$ 
and the set of all functions from $X_{\Lambda_{n}}$ into itself. Therefore, it follows that $\mathcal{F}_n(X)$ is a finite set.
\end{enumerate}
\end{remark}

%Now, it's easy to prove that $\mathcal{F}(X)$ is a countable, since exists a one-to-one map from each $\mathcal{F}_N(X)$ into $\mathsf{Sym(}X_{B_N}\mathsf{)}$, (see \emph{Step 1.} from the proof of Lemma \ref{satan}). 
%Using the Example \ref{group}, 
Now, we define the topological Gibbs relation $\gr^{0}$ as being the equivalence relation generated by $\mathcal{F}(X)$, i.e.,
\begin{equation}
\gr^{0} \defeq \EuScript{R}_{\mathcal{F}(X)} = \left\{(x,y) \in X \times X : y=\varphi(x)\;\text{for some}\;\varphi \in \mathcal{F}(X) \right\}.
\end{equation}
Observe that $\gr^{0}$ is a subset of $\gr$, and given a function $f$ in $SV_{d}(X)$ the restriction of $\phi_{f}$ to $\gr^{0}$ is a $\gr^{0}$-cocycle. 
%For simplicity, we will denote $\phi_{f}\restriction_{\gr^{0}}$ by $\phi_{f}$. 
Now, we are able to introduce the concept of a topological Gibbs measure.
\begin{mydef}[Topological Gibbs measure]\label{topgibbsm}
A Borel probability measure $\mu$ on $X$ is called a topological Gibbs measure for a function $f \in SV_d(X)$ 
if it is $\left(\phi_{f}\restriction_{\gr^{0}}, \gr^0\right)$-conformal.
\end{mydef}

\begin{remark}\label{capeeeta}
It follows from Proposition \ref{haaa} that a Borel probability measure $\mu$ on $X$ is a topological Gibbs measure for a function $f$ in $SV_{d}(X)$ if and only if for each $\varphi$ in $\mathcal{F}(X)$ 
the measure $\varphi_{\ast}\mu$ is absolutely continuous with respect to $\mu$ and 
the equation
\begin{equation}
\frac{d \varphi_{\ast}\mu}{d \mu}(x) = e^{\phi_{f}(x,\varphi^{-1}(x))} 		
\end{equation}	
holds for $\mu$-almost every $x$ in $X$.
\end{remark}

Using Proposition \ref{boraut} and Remark \ref{capeeeta}, one can easily prove that every Gibbs measure for a function $f$ in $SV_{d}(X)$ is also a topological Gibbs measure for $f$. 
The converse is not necessarily true, as we will see in the next section. 
%proved by Meyerovitch \cite{meyerovitch:13}. The proof consists to show that there is a subshift with an equilibrium measure which is to a Gibbs measure. Using 
%Theorem \ref{mey} proved in the following, we conclude that there is a topological Gibbs measure which is not a Gibbs measure. 

Our next result says that $\gr = \gr^0$ in the case where $X$ is a subshift of finite type.
The main consequence of this result is that, under the same assumption, both notions of Gibbs measures given by Definitions \ref{gibbsm} and \ref{topgibbsm} coincide.

\begin{prop}
If $X$ is a subshift of finite type, then $\gr = \gr^0$.
\end{prop}

\begin{proof}
It is sufficient to prove that $\gr \subseteq \gr^0$. Given an arbitrary element $(x,y)$ of $\gr$ there exists a positive integer $n$ 
such that $x_{\Lambda_{n}^c} = y_{\Lambda_{n}^c}$, and $X = \mathsf{X}_{\mathcal{F}}$ for some collection $\mathcal{F}$ of patterns on $\Lambda_{n}$ (see Remark \ref{remarksft}).

Let $\omega = x_{\Lambda_{3n}}$ and $\eta = y_{\Lambda_{3n}}$. Let us show that for every $z$ in $X$ we have $\omega z_{\Lambda_{3n}^c} \in X$ if and only if $\eta z_{\Lambda_{3n}^c} \in X$.
Indeed, if $\omega z_{\Lambda_{3n}^c}$ belongs to $X$, then $\sigma^{l}\left(\eta z_{\Lambda_{3n}^c}\right)_{\Lambda_{n}} = \big(\sigma^{l}y\big)_{\Lambda_{n}}$ holds whenever
$\|l\| \leq 2n$, and $\sigma^{l}\left(\eta z_{\Lambda_{3n}^c}\right)_{\Lambda_{n}} = \sigma^{l}\left(\omega z_{\Lambda_{3n}^c}\right)_{\Lambda_{n}}$ holds whenever
$\|l\| > 2n$. It follows that $\sigma^{l}\left(\eta z_{\Lambda_{3n}^c}\right)_{\Lambda_{n}} \notin \mathcal{F}$ for each $l \in \zd$, thus $\eta z_{\Lambda_{3n}^c}$ belongs to $X$. The proof of the converse is analogous.

Define $\varphi : X \rightarrow X$ by

\begin{equation}
\varphi(z) = 
\begin{cases}
\omega z_{\Lambda_{3n}^c} &\text{if}\;z\in [\eta], \\
\eta z_{\Lambda_{3n}^c} &\text{if}\;z\in [\omega], \\
z &	\text{otherwise}.
\end{cases}
\end{equation}
It is straightforward to show that $\varphi \circ \varphi = \mathsf{id}_{X}$ and $\varphi$ is continuous. Thus, we conclude that $\varphi$ is an element of $\mathcal{F}(X)$
and $(x,y) = (x,\varphi(x)) \in \gr^{0}$.
\end{proof}

\section{Connection with equilibrium measures}\label{geq}

Recall that in Section \ref{pressure} we introduced the definition of an equilibrium measure in a more general context.
Thus, we devote this section to provide a connection between Gibbs and equilibrium measures for subshifts. Our main aim is to prove the following result.
%If the reader is not familiar with the concept of an 
%equilibrium measure, see \cite{keller:98}. 

\begin{teo}[Meyerovitch]\label{mey}
Let $X \subseteq \ds$ be a subshift and let $f$ be a function in $SV_d(X)$. Then, any equilibrium measure $\mu$ for $f$
is a topological Gibbs measure for $f$.
\end{teo}

Before entering into the proof of this theorem let us give some comments and prove a few preliminary results. 

As we commented in the previous section, it is not true that every topological Gibbs measure is also a Gibbs measure. In fact, Meyerovitch \cite{meyerovitch:13} provided an
example of a subshift that admits an equilibrium measure that is not a Gibbs measure. Then, using the theorem above, our assertion follows. 

In view of Theorem \ref{mey}, if we assume that $X$ is a subshift of finite type, we obtain the following corollary.

\begin{cor}\label{corsft}
Let $X \subseteq \ds$ be a SFT and let $f$ be a function in $SV_d(X)$. Then, any equilibrium measure $\mu$ for $f$
is a Gibbs measure for $f$.
\end{cor}

Now, let us turn to the 
preliminary results.

%\begin{lemma}\label{yaa}
%Let $f,g \in SV_d(X)$ and $x,y \in X$ such that $x_{B_r^c} = y_{B_r^c}$ for some $r \geq 0$.
%Then $|\phi_{f}(x,y) - \phi_{g}(x,y)| \leq 2|B_{r+1}| \cdot \|f-g\|_{SV_d}$. 
%\end{lemma}

%\begin{proof}
%First, note that $x,y \in X$ satisfying the property above implies that $(x,y)\in \gr$. Therefore,

%\begin{eqnarray*}
%|\phi_{f}(x,y) - \phi_{g}(x,y)| & = & \Bigg|\sum\limits_{i \in \zd}f(T^ix) - f(T^iy) - \sum\limits_{i \in \zd}g(T^ix) - g(T^iy)\Bigg| \\
%& = & \Bigg|\lim\limits_{N \to +\infty}\sum\limits_{k \in B_N}f(T^kx) - f(T^ky) - \lim\limits_{N \to +\infty}\sum\limits_{k \in B_N}g(T^kx) - g(T^ky)\Bigg|  \\
%& = & \Bigg|\lim\limits_{N \to +\infty}\sum\limits_{k \in B_N}(f-g)(T^kx) - (f-g)(T^ky)\Bigg| \\
%& \leq & \lim\limits_{N \to +\infty} \sum\limits_{k \in B_N}\big|(f-g)(T^kx) - (f-g)(T^ky)\big| \\
%& \leq & 2|B_{r+1}|\cdot \|f-g\|_{SV_d}
%\end{eqnarray*}
%Where we used the Lema \ref{eita} on the last inequality.
%\end{proof}

\begin{lemma}\label{ya}
Let $f$ be a function in $SV_d(X)$ and let $\varphi$ be an element of $\mathcal{F}(X)$. Then, the function $F : X \rightarrow \mathbb{R}$ defined by $F(x) = \phi_{f}(x,\varphi(x))$ is continuous. 
\end{lemma}

\begin{proof}
Let $(F_{N})_{N \in \mathbb{N}}$ be a sequence of real-valued functions on $X$, where each $F_N$ is given by $F_N(x) = \sum\limits_{k \in \Lambda_N} f(T^k \varphi(x)) - f(T^k x)$.{}
Note that $(F_{N})_{N \in \mathbb{N}}$ is a sequence of continuous functions that converges pointwise to $F$. If we prove that $(F_N)_{N \in \mathbb{N}}$ is a Cauchy sequence
with respect to the norm $\|\cdot\|_{\infty}$, then we will conclude that $F$ is the limit of this sequence (with respect to $\|\cdot\|_{\infty}$) and the result follows.

Let $m$ be a positive integer such that $\varphi(x)_{\Lambda_{m}^c} = x_{\Lambda_{m}^c}$ holds for all $x$ in $X$. 
Given an arbitrary element $x$ of $X$, let $y = \varphi(x)$. 
%If $x$ is an element of $X$ and if we let $y = \varphi(x)$, then it is easy to verify that we are under the same hypotheses as on Lemma \ref{eita}. Thus, we can use some inequalities obtained on
%its proof.
Since we are under the same hypotheses of Lemma \ref{eita}, for all integers $M$ and $N$ such that $N>M \geq m$, we can use equation (\ref{in}) in order to obtain 

\begin{eqnarray*}
\left|F_N(x)-F_M(x)\right| & = & \left|\sum\limits_{k \in \Lambda_N\backslash \Lambda_M}f(T^k y) - f(T^k x)\right| \leq \sum\limits_{n=M}^{N-1}\sum_{\substack{k \in \zd \\ \|k\| = n}} \underbrace{\big|f(T^k y) - f(T^k x)\big|}_{\leq\, \delta_{\|k\|-(m-1)}(f)} \\
& \leq & \sum\limits_{n=M}^{N-1}\left|\{k \in \zd : \|k\| = n\}\right| \delta_{n-(m-1)}(f) \\
& \leq & 2\left|\Lambda_{m+1}\right| \sum\limits_{n=M}^{N-1}\left(\frac{n}{m}\right)^{d-1}\,\delta_{n-(m-1)}(f) \\ 
& = & 2\left|\Lambda_{m+1}\right| \sum\limits_{n=M-(m-1)}^{N-m}\left(\frac{n+(m-1)}{m}\right)^{d-1}\,\delta_{n}(f) \\
& \leq & 2\left|\Lambda_{m+1}\right|\sum\limits_{n=M-(m-1)}^{N-m}n^{d-1}\,\delta_{n}(f).\\ 
\end{eqnarray*}
It follows that
\begin{equation}
\|F_{N} - F_{M}\|_{\infty} \leq  2\left|\Lambda_{m+1}\right|\sum\limits_{n=M-(m-1)}^{N-m}n^{d-1}\,\delta_{n}(f)	
\end{equation}
holds whenever $M$ and $N$ satisfy $N > M \geq m$.
Observe that for every positive number $\epsilon$ there is an integer $N_0 \geq m$ such that the conditions $N > M \geq N_0$ imply that $\sum\limits_{n=M-(m-1)}^{N-m}n^{d-1}\,\delta_{n}(f) < \frac{\epsilon}{2|\Lambda_{m+1}|}$.
Thus $\|F_N-F_M\|_{\infty} < \epsilon$ holds whenever $M$ and $N$ satisfy $N > M \geq N_0$.
\end{proof}

The next proposition will play an important role in the proof of Theorem \ref{mey}.

\begin{prop}\label{lim}
Let $(f_n)_{n \in \mathbb{N}}$ be a sequence in $SV_d(X)$ that converges to $f$ in norm. If $(\mu_n)_{n \in \mathbb{N}}$ is
a sequence of Borel probability measures that converges weakly to $\mu$ and each $\mu_n$ is a topological Gibbs measure for $f_{n}$,
then $\mu$ is a topological Gibbs measure for $f$.
\end{prop}

\begin{proof}
According to Remark \ref{capeeeta} it is sufficient to show that
for each $\varphi$ in $\mathcal{F}(X)$ 
the measure $\varphi_{\ast}\mu$ is absolutely continuous with respect to $\mu$ and 
\begin{equation}
\frac{d \varphi_{\ast}\mu}{d \mu}(x) = e^{\phi_{f}(x,\varphi^{-1}(x))} \quad \mu\text{-a.e}.		
\end{equation}	

Let us consider an element $\varphi$ of $\mathcal{F}(X)$. Given a real-valued continuous function $g$ on $X$, we have
\begin{equation*}
\int_{X}g\, d \varphi_{\ast}\mu = \int_{X}g\circ \varphi\, d\mu = \lim\limits_{n \to \infty}\int_{X}g\circ \varphi\, d\mu_n 
 =  \lim\limits_{n \to \infty}\int_{X}g\, d\varphi_{\ast}\mu_n. \\
\end{equation*}
Since each $\mu_n$ is a topological Gibbs measure for $f_{n}$, we obtain 
\begin{equation}\label{carai1}
\int_{X}g\, d \varphi_{\ast}\mu = \lim\limits_{n \to \infty}\int_{X}g(x)\,e^{\phi_{f_n}(x,\varphi^{-1}(x))}d\mu_n (x).   
\end{equation}

For every positive integer $n$, we have

\begin{eqnarray*}
\Big|\int_{X}g(x)\,e^{\phi_{f_n}(x,\varphi^{-1}(x))}d\mu_n (x)&-&\int_{X}g(x)\,e^{\phi_{f}(x,\varphi^{-1}(x))}d\mu (x)\Big|  \leq \\
\leq &&\Big|\int_{X}g(x)\,e^{\phi_{f_n}(x,\varphi^{-1}(x))}d\mu_n (x) - \int_{X}g(x)\,e^{\phi_{f}(x,\varphi^{-1}(x))}d\mu_n (x)\Big| \\
 &+&\Big|\int_{X}g(x)\,e^{\phi_{f}(x,\varphi^{-1}(x))}d\mu_n (x) - \int_{X}g(x)\,e^{\phi_{f}(x,\varphi^{-1}(x))}d\mu (x)\Big| \\
\leq &&\int_{X}\left|g(x)\right|\cdot\left|e^{\phi_{f_n}(x,\varphi^{-1}(x))}- e^{\phi_{f}(x,\varphi^{-1}(x))}\right|d\mu_n (x) \\
 &+&\Big|\int_{X}g(x)\,e^{\phi_{f}(x,\varphi^{-1}(x))}d\mu_n (x) - \int_{X}g(x)\,e^{\phi_{f}(x,\varphi^{-1}(x))}d\mu (x)\Big| \\
\leq && \|g\|_{\infty}\cdot\left\|\,e^{\phi_{f_n}(\,\cdot\,,\varphi^{-1}(\cdot))} - e^{\phi_{f}(\,\cdot\,,\varphi^{-1}(\cdot))}\right\|_{\infty} \\
 &+&\Big|\int_{X}g(x)\,e^{\phi_{f}(x,\varphi^{-1}(x))}d\mu_n (x) - \int_{X}g(x)\,e^{\phi_{f}(x,\varphi^{-1}(x))}d\mu (x)\Big|. 
\end{eqnarray*}
Using Lemma \ref{eita}, the reader can easily verify that there is a positive integer $m$ such that $|\phi_{f}(x,\varphi^{-1}(x)) - \phi_{f_{n}}(x,\varphi^{-1}(x))|  = |\phi_{f-f_{n}}(x,\varphi^{-1}(x)) | \leq 2|\Lambda_{m+1}| \cdot \|f-f_{n}\|_{SV_{d}}$ 
holds for each point $x$ in $X$ and each positive integer $n$. It follows that $e^{\phi_{f_n}(\,\cdot\,,\varphi^{-1}(\cdot))}$ converges uniformly to $e^{\phi_{f}(\,\cdot\,,\varphi^{-1}(\cdot))}$.
Note that Lemma \ref{ya} implies that the function $x \mapsto g(x) \cdot e^{\phi_{f}(x,\varphi^{-1}(x))}$ is continuous, thus $\lim\limits_{n \to \infty}\int_{X}g(x)\,e^{\phi_{f}(x,\varphi^{-1}(x))}d\mu_n (x) = \int_{X}g(x)\,e^{\phi_{f}(x,\varphi^{-1}(x))}d\mu (x)$.
Therefore, we obtain

\begin{equation}\label{carai2}
\lim\limits_{n \to \infty}\int_{X}g(x)\,e^{\phi_{f_n}(x,\varphi^{-1}(x))}d\mu_n (x) = \int_{X}g(x)\,e^{\phi_{f}(x,\varphi^{-1}(x))}d\mu (x).
\end{equation}

Comparing equations (\ref{carai1}) and (\ref{carai2}) we conclude that
\[\int_{X}g\, d \varphi_{\ast}\mu = \int_{X}g(x)\,e^{\phi_{f}(x,\varphi^{-1}(x))}d\mu (x)\]
holds for all real-valued continuous function $g$ on $X$, and the result follows.
\end{proof}

\begin{proof}[Proof of Theorem \ref{mey}]
% This proof is divided into $3$ steps.

Let us assume that the theorem is valid for local functions on $X$. As we commented in Section \ref{pressure}, it follows from the expansivity of the shift action $T$ that its topological entropy 
$\sup\limits_{\mu \in \mathcal{M}(T)}h_{\mu}(T)$ is finite. Then, let us consider the pressure function $p : SV_{d}(X) \rightarrow \mathbb{R}$ defined by
\begin{equation}
p(f) = 	\sup\limits_{\mu \in \mathcal{M}(T)}\left\{h_{\mu}(T) + \int_{X}f\,d\mu\right\}.
\end{equation}
It is well known that $p$ is a convex function that satisfies $|p(f_{1}) - p(f_{2})| \leq \|f_{1}-f_{2}\|_{\infty} \leq \|f_{1}-f_{2}\|_{SV_{d}}$ for all
$f_{1}$ and $f_{2}$ in $SV_{d}(X)$ (see \cite{keller:98}). 

A theorem due to Lanford and Robinson \cite{lanfordrob:68} states that given a real-valued continuous convex function
$p$ defined on a separable Banach space $\mathscr{X}$, with a dense subset $\mathscr{X}_{0}$, any linear functional that is tangent to the graph of $p$ at $f \in \mathscr{X}$ belongs to
the weak* closure of the convex hull of the set
\begin{equation}
\left\{\lim\limits_{n \to \infty} \psi_{n} : \psi_{n}\;\text{is tangent to}\;p\;\text{at}\;f_{n},\;\text{where}\;(f_{n})_{n \in \mathbb{N}}\;\text{converges to}\;f\;\text{in norm}\right\}. 	
\end{equation}   
In our context, we have $\mathscr{X} = SV_{d}(X)$ and $\mathscr{X}_{0} = \text{Loc}(X)$. Thus, every equilibrium measure $\mu$ for a function $f$ in $SV_{d}(X)$ is the weak limit
of a sequence $(\mu_{n})_{n\in \mathbb{N}}$ of Borel probability measures such that each $\mu_{n}$ is an equilibrium measure for $f_{n}$, where $(f_{n})_{n \in \mathbb{N}}$ is a sequence of local functions that conveges to $f$ in norm. 
Since we assumed that every equilibrium measure for a local function is a topological Gibbs measure, it follows from Proposition \ref{lim} that $\mu$ is a topological
Gibbs measure for $f$.
%Analogously to Theorem 4.3.3 from \cite{keller:98}, one can easily prove that
%the probability measures associated to tangent functionals to $p$ at a function in $SV_{d}(X)$ are exactely the equilibrium measures for this function.

From now on, we will concentrate our efforts to show that the theorem is valid for local functions. Let us show that it suffices to prove the result in the case where $f(x)$ depends only on $x_{\mathbf{0}}$. 
Let $f$ be a local function on $X$, let $\Lambda$ be a nonempty finite subset of $\zd$ such that $f(x)$ depends only on $x_{\Lambda}$, and let $Y$ be a subset of the full shift $(X_{\Lambda})^{\zd}$ defined by
\begin{equation}
Y = \left\{\left(T^{i}(x)_{\Lambda}\right)_{i \in \zd} : x \in X\right\}.	
\end{equation} 
It is straightforward to show that $Y$ is a subshift of $(X_{\Lambda})^{\zd}$ and that the map $\Phi : X \rightarrow Y$ defined by letting
\begin{equation}
\Phi(x) = \left(T^{i}(x)_{\Lambda}\right)_{i \in \zd}	
\end{equation}
for each $x$ in $X$, is a homeomorphism. If we let $S$ be the shift action on $Y$, then it is clear that 
\begin{equation}
\Phi \circ T^{j} = S^{j} \circ \Phi	
\end{equation}
holds for every $j$ in $\zd$.

It is well known that for every $T$-invariant Borel probability measure $\mu$ on $X$, the measure $\Phi_{\ast}\mu$ is an $S$-invariant Borel probability measure on $Y$, and we have $h_{\mu}(T) = h_{\Phi_{\ast}\mu}(S)$. 
Thus, the reader can easily check that if $\mu$ is an equilibrium measure for $f$, then $\Phi_{\ast}\mu$ is an equilibrium measure for $f \circ \Phi^{-1}$. Since
$f \circ \Phi^{-1}$ is an element of $\text{Loc}(Y)$ such that $f \circ \Phi^{-1}(y)$ depends ony on $y_{\mathbf{0}}$, it follows from our assumption that $\Phi_{\ast}\mu$ is a
topological Gibbs measure for $f \circ \Phi^{-1}$. Then, for every element $\varphi$ of $\mathcal{F}(X)$ the function $\widetilde{\varphi}$ defined by
$\widetilde{\varphi} = \Phi \circ \varphi \circ \Phi^{-1}$ belongs $\mathcal{F}(Y)$, and satisfies
\begin{eqnarray*}
\varphi_{\ast}\mu (B) &=& \widetilde{\varphi}_{\ast}(\Phi_{\ast}\mu) (\Phi(B)) = \int_{\Phi(B)}	e^{\phi_{f \circ \Phi^{-1}}(y,\widetilde{\varphi}^{-1}(y))} d\Phi_{\ast}\mu(y) \\
&=& \int_{B}e^{\phi_{f \circ \Phi^{-1}}(\Phi(x),\widetilde{\varphi}^{-1}\circ \Phi(x))} d\mu(x) \\
&=& \int_{B}e^{\phi_{f \circ \Phi^{-1}}(\Phi(x),\Phi \circ \varphi^{-1}(x))} d\mu(x)  \\
&=& \int_{B}e^{\phi_{f}(x,\varphi^{-1}(x))} d\mu(x) 
\end{eqnarray*}
for each Borel subset $B$ of $X$. Using Remark \ref{capeeeta}, we conclude that $\mu$ is a Gibbs measure for $f$.

In the case where $f$ is a local function that depends only on $x_{\mathbf{0}}$, see \cite{meyerovitch:13}.
\end{proof}

\section{Characterization of Gibbs measures}\label{eq}
In this section we provide another characterizations of Gibbs measures on subshifts in order to connect both definitions presented in Section \ref{lalala} with
more familiar definitions presented in the literature.	

\subsection{Capocaccia's definition}
Let us present the definition of a Gibbs state, given by Capocaccia \cite{cap:76}, for compact metrizable spaces where $\zd$ acts by an expansive group of homeomorphisms, and relate this notion with the definitions given
in Section \ref{lalala}. 

Let $X$ be a nonempty compact metrizable space, and let $T$ be an expansive continuous action of $\zd$ on $X$. 
Recall that the Gibbs relation of $(X,T)$ is defined by

\[\gr = \left\{(x,y) \in X \times X : \lim\limits_{\|k\| \to \infty} \rho(T^k x,T^k y) = 0 \right\},\]
where $\rho$ is a metric on $X$ which induces its topology, and the relation given above does not depends on the choice of the metric $\rho$.
 
In Capocaccia's terminology, any two points $x$ and $y$ in $X$ are called conjugate if the pair $(x,y)$ belongs to $\gr$. 
And, if $O$ is an open subset of $X$, then a mapping $\varphi:O \rightarrow X$ is said to be conjugating if $\rho(T^kx,T^k\varphi(x))$ tends uniformly to zero as $\|k\|$ approaches infinity.
Note that the notion of a conjugating mapping also does not depends on the choice of the metric $\rho$.

In the remainder of this section we will always assume the following condition.

\begin{assumption}\label{assump} 
Suppose that for every pair of conjugate points $x,y \in X$ there is an open subset $O$ of $X$ containing the point $x$, and a conjugating mapping
$\varphi: O \rightarrow X$ that is continuous at $x$ and satisfies $\varphi(x) = y$.
\end{assumption}

It was proved in \cite{cap:76} that the assumption made above implies that
for every such mapping $\varphi$ there is an open set $\widetilde{O} \subseteq O$
containing $x$ such that $\varphi$ is a homeomorphism of $\widetilde{O}$ onto $\varphi(\widetilde{O})$. 
Also, if $\varphi'$ is a mapping that has the same
properties as $\varphi$, then $\varphi$ and $\varphi'$ agree on some neighborhood of $x$.

\begin{ex}
Note that Assumption \ref{assump} is satisfied in the case where $X$ is a subshift of finite type. Indeed, since we have $\gr = \gr^{0}$, then for each pair $(x,y)$ in $\gr$ there is	
an element $\varphi$ of $\mathcal{F}(X)$ such that $y = \varphi(x)$. It is straightforward to show that $\varphi$ is a conjugating mapping.
\end{ex}

In the same way as we did in Section \ref{rn}, for each Borel subset $O$ of $X$ we will denote the restriction of a Borel measure $\mu$ on $X$ to the $\sigma$-algebra of Borel subsets of $O$ by $\mu_O$.

\begin{mydef}[Capocaccia's definition for a Gibbs state]
We say that a family $\mathscr{I} = (R_{(O,\varphi)})$ is a family of multipliers if
\begin{itemize}
\item[(a)] $\mathscr{I}$ is indexed by all pairs $(O,\varphi)$, where $O$ is an open subset of $X$ and $\varphi$ is a conjugating homeomorphism defined on $O$, and 
$R_{(O,\varphi)}$ is a positive continuous function on $O$,
\item[(b)] if $O' \subseteq O$ and $\varphi' = \varphi\restriction_{O'}$, then $R_{(O',\varphi')} = R_{(O,\varphi)}\restriction_{O'}$, and
\item[(c)] if $O \subseteq O'$ and $\varphi'(O) \subseteq O''$, then
\begin{equation*}
R_{(O,\varphi''\circ \varphi'\restriction_{O})} = R_{(O',\varphi')}\restriction_{O}\cdot R_{(O'',\varphi'')}\circ \varphi'\restriction_{O}.
\end{equation*}
\end{itemize}
A Borel probability measure $\mu$ on $X$ is said to be a Gibbs state for the family of multipliers $\mathscr{I}$ if
\begin{equation}\label{gibbscap}
\varphi_{\ast}\big(R_{(O,\varphi)}\,d\mu_{O}\big) = \mu_{\varphi(O)}\qquad \text{holds for every pair}\;(O,\varphi).
\end{equation}
\end{mydef}

\begin{remarks}
\begin{enumerate}[label=(\alph*),ref=\alph*]
\item Observe that equation (\ref{gibbscap}) makes sense. In fact,
according to Theorem $8.3.7$ from \cite{cohn:13}, the image $\varphi(O)$ of a one-to-one measurable function $\varphi$ from a Borel 
subset $O$ of a Polish space $X$ into another Polish space $Y$, is Borel set of $Y$.  
Thus, for each pair $(O,\varphi)$, since $O$ is a Borel subset of $X$, it follows that $\varphi(O)$ is also a Borel set. 

\item It is easy to verify that $\mu$ is a Gibbs state for the family $\mathscr{I}$ if and only if for each pair $(O,\varphi)$ the equation
\begin{equation}\label{gibbscapp}
\frac{d(\mu_{\varphi(O)}\circ \varphi)}{d\mu_{O}} = R_{(O,\varphi)} \quad \text{holds $\mu$-almost everywhere on}\; O.
\end{equation}
\end{enumerate}
\end{remarks}

\begin{teo}
Let $X$ be a SFT, and let $f$ be a function in $SV_d(X)$. Then, a Borel probability measure $\mu$ on $X$ is a Gibbs measure for
$f$ if and only if $\mu$ is a Gibbs state for the family of multipliers $\mathscr{I}$ defined by  
\[R_{(O,\varphi)}(x) = e^{\phi_f(x,\varphi(x))}\quad \text{at each point $x \in O$,}\]
for all pairs $(O,\varphi)$.
\end{teo}

\begin{proof}
It is straightforward to check that $\mathscr{I}=(R_{(O,\varphi)})$ is a family of multipliers.

Let $\varphi$ be a conjugating homeomorphism defined on $O$. Since 
$\varphi : O \rightarrow \varphi(O)$ is an isomorphism such that $\mathsf{gr}(\varphi) \subseteq \gr$, it follows from 
Proposition \ref{boraut} that
\[\frac{d\mu_{\varphi(O)}\circ \varphi}{d\mu_O}(x)= D_{\mu,\gr}(\varphi(x),x) = e^{\phi_f(x,\varphi(x))}\]
holds for $\mu$-almost every point $x$ in $O$.  

Conversely, since each element $\varphi$ of $\mathcal{F}(X)$ is a conjugating homeomorphism, then the equation
\[\frac{d\mu\circ\varphi}{d\mu} = e^{\phi_{f}(x,\varphi(x))} \quad \text{holds for $\mu$-almost every $x$ in $X$.}\]
It follows from Remark \ref{capeeeta} that $\mu$ is a topological Gibbs measure for $f$, and using the fact that $X$ is a SFT, 
we conclude that $\mu$ is a Gibbs measure for $f$.
\end{proof}

\subsection{DLR equations}

In this section we provide an alternative characterization of Gibbs measures on subshifts of finite type in terms of 
conditional expectations by means of the so-called DLR equations. Due to its probabilistic
interpretation, this approach is widely adopted in many textbooks on statistical mechanics  
(e.g. \cite{georgii:11},\cite{ruelle:04},\cite{ny:08}).  

First, let us introduce some notation. 
We will denote the collection of all nonempty finite subsets of $\zd$
by $\mathscr{S}$.
Recall that for each $j$ in $\zd$ the projection of the full shift $\ds$ onto the $j$-th coordinate is the map $\pi_j : \ds \rightarrow \mathcal{A}$ 
defined by letting $\pi_{j}(x) = x_{j}$ for each $x = (x_i)_{i \in \zd}$.
Given an arbitrary subset $\Delta$ of $\zd$, let $\EuScript{F}_{\Delta}$ denote the smallest $\sigma$-algebra of subsets of $\ds$ 
which contains the collection  
\begin{equation}
\left\{\pi_i^{-1}(A): i \in \Delta, A \subseteq \mathcal{A}\right\}.
\end{equation}

Now, if we let $X$ be a subshift of $\ds$, then we will denote by $\mathscr{F}$ its Borel $\sigma$-algebra and 
by $\mathscr{F}_{\Delta}$ the restriction of the $\sigma$-algebra $\EuScript{F}_{\Delta}$ to $X$. 
The reader can easily check that $\mathscr{F}_{\Delta}$ is a sub-$\sigma$-algebra of $\mathscr{F}$.
In the following, for each real-valued function $f$ defined on $X$ and each positive integer $n$, for notational convenience we will write $f_n$ instead of $\sum\limits_{i \in \Lambda_n}f\circ T^i$.

\begin{lemma}\label{xenaaprincesaguerreira}
Let $X \subseteq \ds$ be a subshift, let $f$ be a function in $SV_d(X)$, and let $\Lambda \in \mathscr{S}$. Then, the limit	
\begin{equation}\label{limlimlim}
\lim\limits_{n \to \infty}\frac{e^{f_n(\omega x_{\Lambda^c})}\bold{1}_{\{\omega x_{\Lambda^c}\in X\}}}{\sum\limits_{\eta\, \in \mathcal{A}^{\Lambda}}e^{f_n(\eta x_{\Lambda^c})}\bold{1}_{\{\eta x_{\Lambda^c}\in X\}}} 
\end{equation}
exists for each $\omega \in \mathcal{A}^{\Lambda}$ and each $x \in X$, moreover, it is a nonnegative real number.
\end{lemma}

\begin{proof}
Note that for each positive integer $n$, we have 
\[\sum\limits_{\eta\, \in \mathcal{A}^{\Lambda}}e^{f_n(\eta x_{\Lambda^c})}\bold{1}_{\{\eta x_{\Lambda^c}\in X\}} > 0.\]
In the case where $\omega x_{\Lambda^c}$ does not belong to $X$, the limit given by equation (\ref{limlimlim}) is equal to $0$. 
Otherwise, if $\omega x_{\Lambda^c}$ belongs to $X$, then
\begin{eqnarray*}
\frac{e^{f_n(\omega x_{\Lambda^c})}\bold{1}_{\{\omega x_{\Lambda^c}\in X\}}}{\sum\limits_{\eta\, \in \mathcal{A}^{\Lambda}}e^{f_n(\eta x_{\Lambda^c})}\bold{1}_{\{\eta x_{\Lambda^c} \in X\}}} 
&=& \frac{e^{f_n(\omega x_{\Lambda^c})}}{\sum\limits_{\eta\, \in \mathcal{A}^{\Lambda}}e^{f_n(\eta x_{\Lambda^c})}\bold{1}_{\{\eta x_{\Lambda^c} \in X\}}} \\ 	
&=& \frac{1}{\sum\limits_{\eta\, \in \mathcal{A}^{\Lambda}}e^{-f_n(\omega x_{\Lambda^c})}\cdot\left(e^{f_n(\eta x_{\Lambda^c})}\bold{1}_{\{\eta x_{\Lambda^c} \in X\}}\right)} \\
&=& \frac{1}{\sum\limits_{\eta\, \in \mathcal{A}^{\Lambda}}\exp{\left(\sum\limits_{i \in \Lambda_n}f\circ T^{i}(\eta x_{\Lambda^c})-f\circ T^{i}(\omega x_{\Lambda^c})\right)}\bold{1}_{\{\eta x_{\Lambda^c} \in X\}}} \\	
\end{eqnarray*}
holds for every positive integer $n$. It is easy to prove that 
\[\lim\limits_{n \to \infty}\sum\limits_{\eta\, \in \mathcal{A}^{\Lambda}}\exp{\left(\sum\limits_{i \in \Lambda_n}f\circ T^{i}(\eta x_{\Lambda^c})-f\circ T^{i}(\omega x_{\Lambda^c})\right)}\bold{1}_{\{\eta x_{\Lambda^c} \in X\}} = \sum\limits_{\eta\, \in \mathcal{A}^{\Lambda}} e^{\phi_f(\omega x_{\Lambda^c},\eta x_{\Lambda^c})}\bold{1}_{\{\eta x_{\Lambda^c} \in X\}} > 0.\]
Therefore, it follows that
\begin{equation}\label{oiadlr}
\lim\limits_{n \to \infty}\frac{e^{f_n(\omega x_{\Lambda^c})}\bold{1}_{\{\omega x_{\Lambda^c}\in X\}}}{\sum\limits_{\eta\, \in \mathcal{A}^{\Lambda}}e^{f_n(\eta x_{\Lambda^c})}\bold{1}_{\{\eta x_{\Lambda^c} \in X\}}} 
= \frac{1}{\sum\limits_{\eta\, \in \mathcal{A}^{\Lambda}} e^{\phi_f(\omega x_{\Lambda^c},\eta x_{\Lambda^c})}\bold{1}_{\{\eta x_{\Lambda^c} \in X\}}} > 0.
\end{equation}
\end{proof}

\begin{mydef}\label{especificacao15}
Let $X$ be a subshift of $\ds$, and let $f$ be a function in $SV_d(X)$. Let us define 
a family $\gamma = (\gamma_{\Lambda})_{\Lambda \in \mathscr{S}}$, where each $\gamma_{\Lambda} : \mathscr{F}\times X\rightarrow {[}0,+\infty)$ is defined by letting
\begin{equation}\label{gamma}
\gamma_{\Lambda}(A|\,x) = \lim\limits_{n \to \infty}\frac{\sum\limits_{\omega \in \mathcal{A}^{\Lambda}}e^{f_n(\omega x_{\Lambda^c})}\bold{1}_{\{\omega x_{\Lambda^c}\in A\}}}{\sum\limits_{\eta\, \in \mathcal{A}^{\Lambda}}e^{f_n(\eta x_{\Lambda^c})}\bold{1}_{\{\eta x_{\Lambda^c}\in X\}}}  
\end{equation}
for each $A \in \mathscr{F}$ and each point $x \in X$.
\end{mydef}
\begin{remark}
 Using Lemma \ref{xenaaprincesaguerreira}, the reader can easily verify that equation (\ref{gamma}) 
is well defined, and the relation
\begin{equation}\label{lalalalallalalala}
\gamma_{\Lambda}(A|x) =  \sum\limits_{\omega \in \mathcal{A}^{\Lambda}} \gamma_{\Lambda}([\omega]|x) \bold{1}_{\{\omega x_{\Lambda^c} \in A\}}	
\end{equation}
holds for each $A \in \mathscr{F}$ and each $x \in X$.
\end{remark}

Now, let us show a few properties satisfied by the family $\gamma$ given in Definition \ref{especificacao15}.

\begin{fact}\label{gammaprop}
Given a nonempty finite subset $\Lambda$ of $\zd$, then
\begin{enumerate}[label=(\alph*),ref=\alph*]
\item $\gamma_{\Lambda}(\cdot\,|x)$ is a Borel probability measure on $X$ for each $x \in X$,

\item $\gamma_{\Lambda}(A|\,\cdot)$ is a $\mathscr{F}_{\Lambda^c}$-measurable function for each $A \in \mathscr{F}$, and

\item $\gamma_{\Lambda}(B|\,\cdot) = \rchi_{B}$ for every $B \in \mathscr{F}_{\Lambda^c}$. 
\end{enumerate} 
\end{fact}
\begin{proof}
For part (a), observe that it follows immediately that $\gamma_{\Lambda}(\emptyset|x) = 0$  and $\gamma_{\Lambda}(X|x) =1$. The countable additivity
of $\gamma_{\Lambda}(\cdot\,|x)$ follows from equation (\ref{lalalalallalalala}). 

For part (b), we use the fact that $\gamma_{\Lambda}(A|\,\cdot)$ is a limit of a sequence of 
$\mathscr{F}_{\Lambda^c}$-measurable functions. 

In order to prove part (c), let us define a collection $\mathscr{C}$ of subsets of $X$ by letting $\mathscr{C} = \left\{[\zeta] : \zeta \in \mathcal{A}^{\Delta}, \Delta \in \mathscr{S} \;\text{such that}\; \Delta \subseteq \Lambda^c \right\}\cup\{\emptyset\}$.
Note that this collection is a $\pi$-system on $X$ which generates $\mathscr{F}_{\Lambda^c}$. For each point $x$ in $X$, if we let $\delta_{x} : \mathscr{F} \rightarrow \mathbb{R}$
be the Dirac measure centered on $x$, then it is easy to check that $\gamma_{\Lambda}(B|x) = \delta_{x}(B)$ for
every $B \in \mathscr{C}$. Therefore, both measures $\gamma_{\Lambda}(\cdot\,|x)$ and $\delta_{x}$ coincide on $\mathscr{F}_{\Lambda^c}$, and so the proof of part (c) is complete.
\end{proof}
In Georgii's terminology from \cite{georgii:11}, the properties presented above imply that 
$\gamma = (\gamma_{\Lambda})_{\Lambda \in \mathscr{S}}$ is a family of proper probability kernels
$\gamma_{\Lambda}$ from $(X, \mathscr{F}_{\Lambda^c})$ to $(X, \mathscr{F})$. 

\begin{fact}\label{compatibility}
The family $\gamma$ satisfies the consistency condition 
\begin{equation}\label{eetalele}
\gamma_{\Delta}\gamma_{\Lambda} = \gamma_{\Delta}
\end{equation}
whenever $\Lambda$ and $\Delta$ are elements of $\mathscr{S}$ satisfying $\Lambda \subseteq \Delta$. It means that the equation
\begin{equation}
\int_{X}\gamma_{\Delta}(dy|x)\gamma_{\Lambda}(A|y)  = \gamma_{\Delta}(A|x)
\end{equation}
holds for each set $A \in \mathscr{F}$ and each point $x \in X$.
\end{fact}

\begin{proof}
First, let us prove that given a Borel measurable function $f : X \rightarrow [0,+\infty]$, the equation
\begin{equation}\label{intgamma}
\int_{X}\gamma_{\Delta}(dy|x)f(y)  = \sum\limits_{\omega \in \mathcal{A}^{\Delta}}\gamma_{\Delta}([\omega]|x)\cdot \left(f(\omega x_{\Delta^c}) \bold{1}_{\{\omega x_{\Delta^c} \in X\}}\right)
\end{equation}
holds for every point $x$ in $X$. Indeed, the equation above is easily verified if $f$ is a characteristic function. Using 
the linearity of the integral, it is straightforward to show that the equation above also holds for simple functions. Now, in the case where
$f$ is a nonnegative extended real-valued function, the result follows by using the fact that 
there is an increasing sequence $(\varphi_{n})_{n \in \mathbb{N}}$ of nonnegative (measurable) simple functions converging pointwise to $f$, and applying
the monotone convergence theorem. 

Therefore,
\begin{eqnarray*}
\int_{X}\gamma_{\Delta}(dy|x)\gamma_{\Lambda}(A|y) = 
\end{eqnarray*}
\begin{eqnarray*}
&=& \sum\limits_{\omega \in \mathcal{A}^{\Delta}}\gamma_{\Delta}([\omega]|x) \cdot \left(\gamma_{\Lambda}(A|\omega x_{\Delta^c})\bold{1}_{\{\omega x_{\Delta^c} \in X\}}\right) \\
&=& \sum\limits_{\omega \in \mathcal{A}^{\Delta}}\lim\limits_{n \to \infty}\frac{e^{f_n(\omega x_{\Delta^c})}\bold{1}_{\{\omega x_{\Delta^c}\in X\}}}{\sum\limits_{\eta\, \in \mathcal{A}^{\Delta}}e^{f_n(\eta x_{\Delta^c})}\bold{1}_{\{\eta x_{\Delta^c}\in X\}}} \cdot \lim\limits_{n \to \infty}\frac{\sum\limits_{\omega' \in \mathcal{A}^{\Lambda}}e^{f_n(\omega'\omega_{\Delta \backslash \Lambda}x_{\Delta^c})}\bold{1}_{\{\omega'\omega_{\Delta \backslash \Lambda}x_{\Delta^c}\in A\}}}{\sum\limits_{\eta'\, \in \mathcal{A}^{\Lambda}}e^{f_n(\eta'\omega_{\Delta \backslash \Lambda}x_{\Delta^c})}\bold{1}_{\{\eta'\omega_{\Delta \backslash \Lambda}x_{\Delta^c}\in X\}}}\bold{1}_{\{\omega x_{\Delta^c} \in X\}} \\
&=& \lim\limits_{n \to \infty}\sum\limits_{\omega \in \mathcal{A}^{\Delta}}\frac{\left(e^{f_n(\omega x_{\Delta^c})}\bold{1}_{\{\omega x_{\Delta^c}\in X\}}\right)\left(\sum\limits_{\omega' \in \mathcal{A}^{\Lambda}}e^{f_n(\omega'\omega_{\Delta \backslash \Lambda}x_{\Delta^c})}\bold{1}_{\{\omega'\omega_{\Delta \backslash \Lambda}x_{\Delta^c}\in A\}}\right)}{\left(\sum\limits_{\eta\, \in \mathcal{A}^{\Delta}}e^{f_n(\eta x_{\Delta^c})}\bold{1}_{\{\eta x_{\Delta^c}\in X\}}\right)\left(\sum\limits_{\eta'\, \in \mathcal{A}^{\Lambda}}e^{f_n(\eta'\omega_{\Delta \backslash \Lambda}x_{\Delta^c})}\bold{1}_{\{\eta'\omega_{\Delta \backslash \Lambda}x_{\Delta^c}\in X\}}\right)}\,\bold{1}_{\{\omega x_{\Delta^c}\in X\}} \\
&=& \lim\limits_{n \to \infty}\sum\limits_{\omega'' \in \mathcal{A}^{\Delta \backslash \Lambda}}\sum\limits_{\omega \in \mathcal{A}^{\Lambda}}\frac{\left(e^{f_n(\omega \omega''x_{\Delta^c})}\bold{1}_{\{\omega \omega''x_{\Delta^c}\in X\}}\right) \left(\sum\limits_{\omega' \in \mathcal{A}^{\Lambda}}e^{f_n(\omega'\omega''x_{\Delta^c})}\bold{1}_{\{\omega'\omega''x_{\Delta^c}\in A\}}\right)}{\left(\sum\limits_{\eta\, \in \mathcal{A}^{\Delta}}e^{f_n(\eta x_{\Delta^c})}\bold{1}_{\{\eta x_{\Delta^c}\in X\}}\right)\left(\sum\limits_{\eta'\, \in \mathcal{A}^{\Lambda}}e^{f_n(\eta'\omega''x_{\Delta^c})}\bold{1}_{\{\eta'\omega''x_{\Delta^c}\in X\}}\right)}\,\bold{1}_{\{\omega \omega''x_{\Delta^c}\in X\}} \\
&=& \lim\limits_{n \to \infty}\sum\limits_{\omega'' \in \mathcal{A}^{\Delta \backslash \Lambda}}\frac{\left(\sum\limits_{\omega' \in \mathcal{A}^{\Lambda}}e^{f_n(\omega'\omega''x_{\Delta^c})}\bold{1}_{\{\omega'\omega''x_{\Delta^c}\in A\}}\right)}{\left(\sum\limits_{\eta\, \in \mathcal{A}^{\Delta}}e^{f_n(\eta x_{\Delta^c})}\bold{1}_{\{\eta x_{\Delta^c}\in X\}}\right)} \\
&=& \lim\limits_{n \to \infty}\frac{\sum\limits_{\omega'' \in \mathcal{A}^{\Delta \backslash \Lambda}}\;\sum\limits_{\omega' \in \mathcal{A}^{\Lambda}}e^{f_n(\omega'\omega''x_{\Delta^c})}\bold{1}_{\{\omega'\omega''x_{\Delta^c}\in A\}}}{\sum\limits_{\eta\, \in \mathcal{A}^{\Delta}}e^{f_n(\eta x_{\Delta^c})}\bold{1}_{\{\eta x_{\Delta^c}\in X\}}} \\
&=& \lim\limits_{n \to \infty}\frac{\sum\limits_{\omega \in \mathcal{A}^{\Delta}}e^{f_n(\omega x_{\Delta^c})}\bold{1}_{\{\omega x_{\Delta^c}\in A\}}}{\sum\limits_{\eta\, \in \mathcal{A}^{\Delta}}e^{f_n(\eta x_{\Delta^c})}\bold{1}_{\{\eta x_{\Delta^c}\in X\}}} \\
&=& \gamma_{\Delta}(A|x)
\end{eqnarray*}
holds for each $A \in \mathscr{F}$ and each $x \in X$.
\end{proof}

In view of \cite{georgii:11} and Facts \ref{gammaprop} and \ref{compatibility}, if we let $X = \ds$ and let $f$ be a function in $SV_{d}(X)$, then the
corresponding family $\gamma = (\gamma_{\Lambda})_{\Lambda \in \mathscr{S}}$ is a
specification with parameter set $\zd$ and state space $(\mathcal{A}, \mathcal{P}(\mathcal{A}))$. 

In the following, our goal is to 
provide a characterization of Gibbs measures on subshifts that involves conditional expectations and relate them with the family $\gamma$. 
It will be done by expressing this relationship in terms of a set of equations that is often referred to as DLR equations (e.g. \cite{muir}, \cite{sarig:09},\cite{ny:08}), named for Dobrushin, Lanford and Ruelle. 

\begin{teo}\label{DLR}
Let $X \subseteq \ds$ be a subshift, let $f$ be a function in $SV_d(X)$, and let $\gamma = (\gamma_{\Lambda})_{\Lambda \in \mathscr{S}}$ be the corresponding family given in Definition \ref{especificacao15}. 
If $\mu$ is a Gibbs measure for $f$, then $\mu$ satisfies
\begin{equation}\label{DLReq}
\mu(A|\mathscr{F}_{\Lambda^c}) = \gamma_{\Lambda}(A|\,\cdot\,) \quad \text{$\mu$-a.e.}
\end{equation}
for each $\Lambda \in \mathscr{S}$ and each $A \in \mathscr{F}$.
\end{teo}

\begin{proof}
Step 1. Let us show that for all $\Lambda \in \mathscr{S}$ and $\omega \in \mathcal{A}^{\Lambda}$, the equation
\begin{equation}
\mu\left([\omega]|{\mathscr{F}_{\Lambda^c}}\right)(x) = \gamma_{\Lambda}([\omega]|x)  
\end{equation}
holds for $\mu$-almost every point $x$ in $X$. For each $\eta$ in $\mathcal{A}^{\Lambda}$,
let us define a map $\varphi : X \rightarrow X$ by letting
\begin{equation}\label{funcaosafada}
\varphi(x) = 
\begin{cases}
\omega x_{\Lambda^c} & \text{if}\;x\in [\eta]\;\text{and}\;\omega x_{\Lambda^c}\in X, \\
\eta x_{\Lambda^c} & \text{if}\;x\in [\omega]\;\text{and}\;\eta x_{\Lambda^c}\in X, \\
x &	\text{otherwise}.
\end{cases}
\end{equation}	

\begin{claim}\label{claim1}
The function $\varphi$ defined in (\ref{funcaosafada}) is an involution.
\end{claim}

\begin{proof}\renewcommand{\qedsymbol}{\QEDB}
If we suppose that $x \in [\eta]$ and $\omega x_{\Lambda^c} \in X$, then we have $\varphi(x) = \omega x_{\Lambda^c}$. Using the fact that $\varphi(x) \in [\omega]$ and
$\eta\varphi(x)_{\Lambda^c} = \eta x_{\Lambda^c} = x \in X$, we obtain $\varphi \circ \varphi (x) = x$.
Now, if $x \in [\omega]$ and $\eta x_{\Lambda^c} \in X$, then $\varphi(x) = \eta x_{\Lambda^c}$. Since $\varphi(x) \in [\eta]$ and
$\omega\varphi(x)_{\Lambda^c} = \omega x_{\Lambda^c} = x \in X$, then it follows that $\varphi \circ \varphi (x) = x$.
Finally, if $x$ does not satisfy both conditions above, we have $\varphi \circ \varphi (x) = \varphi(\varphi(x)) = \varphi(x) = x$. 
\end{proof}

\begin{claim}\label{claim2}
The function $\varphi$ is Borel measurable.
\end{claim}
\begin{proof}\renewcommand{\qedsymbol}{\QEDB}
If we let $X_1 = [\eta] \cap \{x \in X : \omega x_{\Lambda^c} \in X\}$, $X_2 = [\omega] \cap \{x \in X : \eta x_{\Lambda^c} \in X\}$, and $X_3 = X\backslash (X_1 \cup X_2)$,
then it follows that each $X_i$ belongs to $\mathscr{F}$ and each $\varphi\restriction_{X_i}$ is a measurable function on $X_{i}$. Therefore, we conclude that
$\varphi$ is a Borel measurable function on $X$.
\end{proof}
\begin{claim}
The function $\varphi$ is a Borel automorphism of $X$, and $\mathsf{gr}(\varphi) \subseteq \gr$. 
\end{claim}
\begin{proof}
This assertion follows directly from Claims \ref{claim1} and \ref{claim2}, and from the fact that 
the equality $\varphi(x)_{\Lambda^{c}} = x_{\Lambda^{c}}$ holds for every point $x$ in $X$.
\end{proof}

For each $F \in \mathscr{F}_{\Lambda^c}$, we have
\begin{eqnarray*}
\int_{F}\rchi_{[\eta]}(x)\bold{1}_{\{\omega x_{\Lambda^c} \in X\}}\,d\mu(x) &=& \int_{F}\rchi_{[\eta]}(x)\rchi_{\{y \in X : \omega y_{\Lambda^c} \in X\}}(x)\,d\mu(x) \\ 
&=& \mu\big([\eta]\cap \{y \in X : \omega y_{\Lambda^c} \in X\} \cap F\big)\\
&=& \varphi_{\ast}\mu\big(\varphi\big([\eta]\cap \{y \in X : \omega y_{\Lambda^c} \in X\}\big) \cap \varphi(F)\big)\\
&=& \varphi_{\ast}\mu\big([\omega]\cap \{y \in X : \eta y_{\Lambda^c} \in X\} \cap \varphi(F)\big).
\end{eqnarray*}
Observe that the $\sigma$-algebra $\mathscr{F}_{\Lambda^c}$ is generated by the collection $\mathscr{C}$ given by
$\mathscr{C} = \{X \cap \pi_{i}^{-1}(A) : i \in \Lambda^c, A\subseteq \mathcal{A}\}$.
Moreover, since the collection $\{F \in \mathscr{F}_{\Lambda^c} : \varphi(F) = F\}$ is a $\sigma$-algebra of subsets of $X$ which contains
$\mathscr{C}$, then it coincides with $\mathscr{F}_{\Lambda^c}$. It follows that 

\begin{equation}
\int_{F}\rchi_{[\eta]}(x)\bold{1}_{\{\omega x_{\Lambda^c} \in X\}}\,d\mu(x) 
= \varphi_{\ast}\mu\big([\omega]\cap \{y \in X : \eta y_{\Lambda^c} \in X\} \cap F\big).
\end{equation}
In view of Proposition \ref{boraut} and the fact that $\mu$ is $(\phi_{f},\gr)$-conformal, we obtain
\begin{eqnarray*}
\int_{F}\rchi_{[\eta]}(x)\bold{1}_{\{\omega x_{\Lambda^c} \in X\}}\,d\mu(x) 
&=& \int_{[\omega]\cap \{y \in X : \eta y_{\Lambda^c} \in X\} \cap F}e^{\phi_f(x,\varphi^{-1}(x))}d\mu(x) \\
&=& \int_{[\omega]\cap \{y \in X : \eta y_{\Lambda^c} \in X\} \cap F}e^{\phi_f(x,\varphi(x))}d\mu(x)\\
&=& \int_{F}e^{\phi_f(x,\varphi(x))}\bold{1}_{\{\eta x_{\Lambda^c} \in X\}}\rchi_{[\omega]}(x)\,d\mu(x) \\
&=& \int_{F}e^{\phi_f(x,\varphi(x))}\bold{1}_{\{\eta x_{\Lambda^c} \in X\}}\bold{1}_{\{\omega x_{\Lambda^c} \in X\}}\rchi_{[\omega]}(x)\,d\mu(x) \\
&=& \int_{F}\underbrace{\left(e^{\phi_f(\omega x_{\Lambda^c},\varphi(\omega x_{\Lambda^c}))}\bold{1}_{\{\omega x_{\Lambda^c} \in X,\; \eta x_{\Lambda^c} \in X\}}\right)}_{\mathscr{F}_{\Lambda^c}\;\text{-measurable function on}\;X}\cdot\,\rchi_{[\omega]}(x)\,d\mu(x) \\
&=& \int_{F}\left(e^{\phi_f(\omega x_{\Lambda^c},\varphi(\omega x_{\Lambda^c}))}\bold{1}_{\{\omega x_{\Lambda^c} \in X,\; \eta x_{\Lambda^c} \in X\}}\right)\cdot\mu([\omega]|\mathscr{F}_{\Lambda^c})(x)\,d\mu(x) \\
&=& \int_{F}\left(e^{\phi_f(\omega x_{\Lambda^c},\eta x_{\Lambda^c})}\bold{1}_{\{\omega x_{\Lambda^c} \in X,\; \eta x_{\Lambda^c} \in X\}}\right)\cdot\mu([\omega]|\mathscr{F}_{\Lambda^c})(x)\,d\mu(x) 
\end{eqnarray*}
for every $F \in \mathscr{F}_{\Lambda^c}$. It follows that the equation
\begin{equation*}\label{dlreita}
\mu([\eta]|\mathscr{F}_{\Lambda^c})(x)\bold{1}_{\{\omega x_{\Lambda^c} \in X\}} = \left(e^{\phi_f(\omega x_{\Lambda^c},\eta x_{\Lambda^c})}\bold{1}_{\{\omega x_{\Lambda^c} \in X,\; \eta x_{\Lambda^c} \in X\}}\right)\cdot\mu([\omega]|\mathscr{F}_{\Lambda^c})(x)
\end{equation*}
holds for $\mu$-almost every $x$ in $X$.
Thus, if we sum the equation above over all elements $\eta$ of $\mathcal{A}^{\Lambda}$, we conclude that
\begin{equation}\label{dlr1}
\bold{1}_{\{\omega x_{\Lambda^c} \in X\}} = \left(\,\sum\limits_{\eta\, \in \mathcal{A}^{\Lambda}}e^{\phi_f(\omega x_{\Lambda^c},\eta x_{\Lambda^c})}\bold{1}_{\{\omega x_{\Lambda^c} \in X,\; \eta x_{\Lambda^c} \in X\}}\right)\cdot\mu([\omega]|\mathscr{F}_{\Lambda^c})(x)
\end{equation}
holds for $\mu$-almost every point $x$ in $X$. 

\begin{lemma}\label{lemadlr}
The equality
\begin{equation}\label{eqlemmadlr}
\mu([\omega]|\mathscr{F}_{\Lambda^c})(x) =\mu([\omega]|\mathscr{F}_{\Lambda^c})(x) \bold{1}_{\{\omega x_{\Lambda^c} \in X\}}
\end{equation}
holds for $\mu$-almost every $x$ in $X$.
\end{lemma}
\begin{proof}[Proof of Lemma \ref{lemadlr}]\renewcommand{\qedsymbol}{\QEDB}
Given a set $F$ in $\mathscr{F}_{\Lambda^c}$, we have
\begin{eqnarray*} 
\int_{F}\rchi_{[\omega]}(x)\,d\mu &=& \int_{F}\rchi_{[\omega]}(x)\bold{1}_{\{\omega x_{\Lambda^c} \in X\}}\,d\mu(x) \\
&=& \int_{F}\underbrace{\mu([\omega]|\mathscr{F}_{\Lambda^c})(x)\bold{1}_{\{\omega x_{\Lambda^c} \in X\}}}_{\mathscr{F}_{\Lambda^c}\text{-measurable function on}\;X}\,d\mu(x)
\end{eqnarray*}
Thus, the result follows.
\end{proof}

Now, let $N \subseteq X$ be a set of measure zero such that equations (\ref{dlr1}) and (\ref{eqlemmadlr}) holds at each point of $X \backslash N$. For every $x$ in $X \backslash N$, if $\omega x_{\Lambda^c}$ belongs to  $X$, then
\[\underbrace{\left(\sum\limits_{\eta\, \in \mathcal{A}^{\Lambda}}e^{\phi_f(\omega x_{\Lambda^c},\eta x_{\Lambda^c})}\bold{1}_{\{\eta x_{\Lambda^c} \in X\}}\right)}_{>\;0}\cdot\,\mu([\omega]|\mathscr{F}_{\Lambda^c})(x) = 1,\]
and using equation (\ref{oiadlr}), we obtain
\begin{equation}\label{baratabola15}
\mu([\omega]|\mathscr{F}_{\Lambda^c})(x) = \frac{1}{\sum\limits_{\eta\, \in \mathcal{A}^{\Lambda}}e^{\phi_f(\omega x_{\Lambda^c},\eta x_{\Lambda^c})}\bold{1}_{\{\eta x_{\Lambda^c} \in X\}}} = \lim\limits_{n \to \infty}\frac{e^{f_n(\omega x_{\Lambda^c})}\bold{1}_{\{\omega x_{\Lambda^c} \in X\}}}{\sum\limits_{\eta\, \in \mathcal{A}^{\Lambda}}e^{f_n(\eta x_{\Lambda^c})}\bold{1}_{\{\eta x_{\Lambda^c} \in X\}}}. 	
\end{equation}
Otherwise, if $\omega x_{\Lambda^c}$ does not belong to $X$, we have $\mu([\omega]|\mathscr{F}_{\Lambda^c})(x) = 0$.
Therefore, we conclude that 
\begin{equation}
\mu([\omega]|\mathscr{F}_{\Lambda^c})(x) 
= \lim\limits_{n \to \infty}\frac{e^{f_n(\omega x_{\Lambda^c})}\bold{1}_{\{\omega x_{\Lambda^c} \in X\}}}{\sum\limits_{\eta \in \mathcal{A}^{\Lambda}}e^{f_n(\eta x_{\Lambda^c})}\bold{1}_{\{\eta x_{\Lambda^c} \in X\}}} 		
= \gamma_{\Lambda}([\omega]|x)
\end{equation}
holds at each point $x$ in $X \backslash N$.

Step 2. For all $\Lambda \in \mathscr{S}$ and $A \in \mathscr{F}$,  
it is straightforward to prove that we have
\[\mu(A|\mathscr{F}_{\Lambda^c})(x) = \sum\limits_{\omega \in \mathcal{A}^{\Lambda}}\mu([\omega]|\mathscr{F}_{\Lambda^c})(x) \bold{1}_{\{\omega x_{\Lambda^c}\in A\}}\]
for $\mu$-almost every $x$ in $X$.
Then, in view of the previous step and equation (\ref{lalalalallalalala}), we conclude that
\begin{equation}
\mu(A|\mathscr{F}_{\Lambda^c}) (x) = \sum\limits_{\omega \in \mathcal{A}^{\Lambda}}\gamma_{\Lambda}([\omega]|x)\bold{1}_{\{\omega x_{\Lambda^c}\in A\}} = \gamma_{\Lambda}(A|x)
\end{equation}
holds for $\mu$-almost every point $x$ in $X$. 
\end{proof} 

On the other hand, we have the following result.

\begin{teo}\label{DLR1}
Let $X \subseteq \ds$ be a subshift, let $f$ be a function in $SV_d(X)$, and let $\gamma = (\gamma_{\Lambda})_{\Lambda \in \mathscr{S}}$ be the corresponding family given 
in Definition \ref{especificacao15}. If $\mu$ is a Borel probability measure on $X$ that 
satisfies 
\begin{equation}\label{spec}
\mu(A|\mathscr{F}_{\Lambda^c}) = \gamma_{\Lambda}(A|\,\cdot\,) \qquad \mu\text{-a.e.}
\end{equation}
for each $\Lambda \in \mathscr{S}$ and each $A \in \mathscr{F}$, then $\mu$ is a topological Gibbs measure for $f$.
\end{teo}

\begin{proof}
Step 1. Let $N$ be a positive integer, and let $\varphi$ be an arbitrary element of $\mathcal{F}_N(X)$. Let us show that for every $\omega \in \mathcal{A}^{\Lambda_N}$ we have
\begin{equation}\label{recdlr1}
\varphi_{\ast}\mu([\omega]) = \int_{[\omega]}e^{\phi_{f}(x,\varphi^{-1}(x))}\,d\mu(x).
\end{equation}
In the following, let us denote $\Lambda_N$ simply by $\Lambda$ just for convenience. Observe that $\varphi^{-1}([\omega]) = [\zeta]$ for some $\zeta \in \mathcal{A}^{\Lambda}$. 
Furthermore, it is easy to check that for every $x$ in $X$, $\omega x_{\Lambda^{c}}$ belongs to $X$ if and only if $\zeta x_{\Lambda^{c}}$ belongs to $X$.
Then 
\begin{eqnarray*}
\gamma_{\Lambda}([\zeta]|x) 
&=& \lim\limits_{n \to \infty}\frac{e^{f_n(\zeta x_{\Lambda^c})}\bold{1}_{\{\zeta x_{\Lambda^c} \in X\}}}{\sum\limits_{\eta\, \in \mathcal{A}^{\Lambda}}e^{f_n(\eta x_{\Lambda^c})}\bold{1}_{\{\eta x_{\Lambda^c} \in X\}}} \\ 	
&=& \lim\limits_{n \to \infty}\frac{e^{f_n(\zeta x_{\Lambda^c})}\bold{1}_{\{\omega x_{\Lambda^c} \in X\}}}{\sum\limits_{\eta\, \in \mathcal{A}^{\Lambda}}e^{f_n(\eta x_{\Lambda^c})}\bold{1}_{\{\eta x_{\Lambda^c} \in X\}}} \\ 	
&=& \lim\limits_{n \to \infty}\left(e^{f_n(\zeta x_{\Lambda^c})-f_n(\omega x_{\Lambda^c})}\bold{1}_{\{\omega x_{\Lambda^c} \in X\}}\right)\cdot\frac{e^{f_n(\omega x_{\Lambda^c})}\bold{1}_{\{\omega x_{\Lambda^c} \in X\}}}{\sum\limits_{\eta\, \in \mathcal{A}^{\Lambda}}e^{f_n(\eta x_{\Lambda^c})}\bold{1}_{\{\eta x_{\Lambda^c} \in X\}}} \\ 	
&=& \lim\limits_{n \to \infty}\;\exp{\left(\sum\limits_{i \in \Lambda_n}f\circ T^{i}(\zeta x_{\Lambda^c})-f\circ T^{i}(\omega x_{\Lambda^c})\right)}\bold{1}_{\{\omega x_{\Lambda^c} \in X\}} \\
&&\;\quad\quad \times \frac{e^{f_n(\omega x_{\Lambda^c})}\bold{1}_{\{\omega x_{\Lambda^c} \in X\}}}{\sum\limits_{\eta\, \in \mathcal{A}^{\Lambda}}e^{f_n(\eta x_{\Lambda^c})}\bold{1}_{\{\eta x_{\Lambda^c} \in X\}}} \\ 	
&=& \left(e^{\phi_f(\omega x_{\Lambda^c},\zeta x_{\Lambda^c})}\bold{1}_{\{\omega x_{\Lambda^c} \in X\}}\right)\cdot \gamma_{\Lambda}([\omega]|x) \\
&=& \left(e^{\phi_f(\omega x_{\Lambda^c},\varphi^{-1}(\omega x_{\Lambda^c}))}\bold{1}_{\{\omega x_{\Lambda^c} \in X\}}\right)\cdot \gamma_{\Lambda}([\omega]|x)
\end{eqnarray*}
holds at each point $x$ in $X$.

Therefore, we have
\begin{eqnarray*}
\varphi_{\ast}\mu([\omega])
&=& \int_{X}\rchi_{[\zeta]}(x)\,d\mu(x) \\
&=& \int_{X}\mu([\zeta]|\mathscr{F}_{\Lambda^c})(x)\,d\mu(x) \\
&=& \int_{X}\underbrace{\left(e^{\phi_f(\omega x_{\Lambda^c},\varphi^{-1}(\omega x_{\Lambda^c}))}\bold{1}_{\{\omega x_{\Lambda^c} \in X\}}\right)}_{\mathscr{F}_{\Lambda^c}\text{-measurable function on}\; X}\,\cdot\, \mu([\omega]|\mathscr{F}_{\Lambda^c})(x)\,d\mu(x) \\
&=& \int_{X}\left(e^{\phi_f(\omega x_{\Lambda^c},\varphi^{-1}(\omega x_{\Lambda^c}))}\bold{1}_{\{\omega x_{\Lambda^c} \in X\}}\right)\cdot\rchi_{[\omega]}(x)\,d\mu(x) \\
&=& \int_{X}e^{\phi_f(x,\varphi^{-1}(x))}\rchi_{[\omega]}(x)\,d\mu(x), 
\end{eqnarray*}
and equation (\ref{recdlr1}) follows.

Step 2. Let $\varphi$ be an arbitrary element of $\mathcal{F}(X)$. Let us consider a collection $\mathscr{C}$ of subsets of $X$ defined by
\[\mathscr{C} = \left\{[\zeta] : \zeta \in \mathcal{A}^{\Lambda}, \Lambda \in \mathscr{S}\right\}\cup\{\emptyset\}.\]
Observe that $\mathscr{C}$ is a $\pi$-system which generates the Borel $\sigma$-algebra of $X$.
If we show that 
\begin{equation}\label{recdlr2}
\varphi_{\ast}\mu(C) = \int_{C}e^{\phi_{f}(x,\varphi^{-1}(x))}\,d\mu(x)
\end{equation}
holds for every $C \in \mathscr{C}$, then the proof will be complete. 
For each $\Lambda \in \mathscr{S}$ and each $\zeta \in \mathcal{A}^{\Lambda}$,
there is a positive integer $n$ such that $\Lambda \subseteq \Lambda_n$ and $\varphi \in \mathcal{F}_n(X)$ (see Remark \ref{fnx}(\ref{fnfnfn})). Using the identity 
$[\zeta] = \bigcup\limits_{\substack{\omega \in \mathcal{A}^{\Lambda_n} \\ \omega_{\Lambda} = \zeta}}[\omega]$ and the previous step, we obtain
\begin{eqnarray*}
\varphi_{\ast}\mu([\zeta]) &=& \sum\limits_{\substack{\omega \in \mathcal{A}^{\Lambda_{n}} \\ \omega_{\Lambda} = \zeta}}\varphi_{\ast}\mu([\omega]) \\
&=&\sum\limits_{\substack{\omega \in \mathcal{A}^{\Lambda_{n}} \\ \omega_{\Lambda} = \zeta}}\int_{[\omega]}e^{\phi_{f}(x,\varphi^{-1}(x))}\,d\mu(x) \\
&=&\int_{[\zeta]}e^{\phi_{f}(x,\varphi^{-1}(x))}\,d\mu(x).
\end{eqnarray*}
\end{proof}

The next result follows immediately from Theorems \ref{DLR} and \ref{DLR1}, and provide us a characterization for
Gibbs measures on subshifts on finite type in terms of the DLR equations.	

\begin{cor}\label{DLR2}
Let $X \subseteq \ds$ be a subshift of finite type, let $f$ be a function in $SV_d(X)$, and let  $\gamma = (\gamma_{\Lambda})_{\Lambda \in \mathscr{S}}$ be the corresponding family given in Definition \ref{especificacao15}. Then, $\mu$ is a Gibbs measure for $f$ if
and only if $\mu$ is a Borel probability measure on $X$ that satisfies
\begin{equation}\label{spec}
\mu(A|\mathscr{F}_{\Lambda^c}) = \gamma_{\Lambda}(A|\,\cdot\,) \qquad \mu\text{-a.e.}
\end{equation}
for each $\Lambda \in \mathscr{S}$ and each $A \in \mathscr{F}$.
\end{cor}

\section{Gibbs measures in statistical mechanics}

Recall that the set of all nonempty finite subsets of $\zd$ is denoted by $\mathscr{S}$. 
Let $\mathscr{S}_{0}$ be an infinite subset of $\mathscr{S}$, and let $\Psi : \mathscr{S}_{0} \rightarrow \mathbb{R}$ be an arbitrary function. 
We will say that the infinite sum 
\[\sum\limits_{\Lambda \in \mathscr{S}_{0}} \Psi_{\Lambda}\]
exists and is equal to a real number $s$, if the net 
\[\left(\sum\limits_{\substack{\Lambda \in \mathscr{S}_{0} \\ \Lambda \subseteq \Delta}} \Psi_{\Lambda}\right)_{\Delta \in \mathscr{S}}\]
converges to $s$. In this case, we write
\[\sum\limits_{\Lambda \in \mathscr{S}_{0}} \Psi_{\Lambda} = s.\]
	
\begin{remark}\label{posisummmmm}
If $\Psi_{\Lambda} \geq 0$ for each $\Lambda$ in $\mathscr{S}_{0}$, then the sum $\sum\limits_{\Lambda \in \mathscr{S}_{0}} \Psi_{\Lambda}$ converges if and only if $\sup\left\{\sum\limits_{\substack{\Lambda \in \mathscr{S}_{0} \\ \Lambda \subseteq \Delta}}\Psi_{\Lambda} : \Delta \in \mathscr{S}\right\}$ is finite. 	
In either case we have $\sum\limits_{\Lambda \in \mathscr{S}_{0}} \Psi_{\Lambda} = \sup\left\{\sum\limits_{\substack{\Lambda \in \mathscr{S}_{0} \\ \Lambda \subseteq \Delta}}\Psi_{\Lambda} : \Delta \in \mathscr{S}\right\}$.
\end{remark}

If $(\Phi_{\Lambda})_{\Lambda \in \mathscr{S}}$ is a family of real-valued functions defined on $X$ such that $\sum\limits_{\Lambda \in \mathscr{S}_{0}} \Phi_{\Lambda}(x)$ exists for each
$x$ in $X$, then we will denote by $\sum\limits_{\Lambda \in \mathscr{S}_{0}} \Phi_{\Lambda}$ the function which associates to each point $x$ in $X$ the sum $\sum\limits_{\Lambda \in \mathscr{S}_{0}} \Phi_{\Lambda}(x)$.

\begin{lemma}\label{lemmamalucooo}
Let $(\Phi_{\Lambda})_{\Lambda \in \mathscr{S}}$ be a family of real-valued bounded functions defined on $X$, and let $\mathscr{S}_{0}$ be an infinite subset of $\mathscr{S}$. Suppose that $\sum\limits_{\Lambda \in \mathscr{S}_{0}} \|\Phi_{\Lambda}\|_{\infty}$ converges. 
\begin{enumerate}[label=(\alph*),ref=\alph*]
\item The net $\left(\sum\limits_{\substack{\Lambda \in \mathscr{S}_{0} \\ \Lambda \subseteq \Delta}} \Phi_{\Lambda}\right)_{\Delta \in \mathscr{S}}$ converges uniformly to $\sum\limits_{\Lambda \in \mathscr{S}_{0}} \Phi_{\Lambda}$, and

%\item for every real number $c$, we have $\sum\limits_{\Lambda \in \mathscr{S}_{0}} c \;\Phi_{\Lambda} = c  \sum\limits_{\Lambda \in \mathscr{S}_{0}} \Phi_{\Lambda}$, and
 
\item\label{naointeressa} if $(c_{n})_{n \in \mathbb{N}}$ is a sequence of functions $c_{n} : \mathscr{S}_{0} \rightarrow \mathbb{R}$ which converges pointwise to a function
$c : \mathscr{S}_{0} \rightarrow \mathbb{R}$ and  
\[C \defeq \sup\limits_{n \in \mathbb{N}}\sup\limits_{\Lambda \in \mathscr{S}_{0}}|c_{n}(\Lambda)| < + \infty,\] 
then 
\[\lim\limits_{n \to \infty} \left\|\sum\limits_{\Lambda \in \mathscr{S}_{0}} c_{n}(\Lambda) \Phi_{\Lambda} - \sum\limits_{\Lambda \in \mathscr{S}_{0}} c(\Lambda) \Phi_{\Lambda} \right\|_{\infty}.\]
\end{enumerate}	
\end{lemma}
\begin{proof}
For each positive number $\epsilon$ there is a set $\Delta_{0} \in \mathscr{S}$ such that 
\begin{equation}
\sum\limits_{\substack{\Lambda \in \mathscr{S}_{0},\, \Lambda \cap \Delta_{0}^{c} \neq \emptyset \\ \Lambda \subseteq \Delta}}\|\Phi_{\Lambda}\|_{\infty} = \sum\limits_{\substack{\Lambda \in \mathscr{S}_{0} \\ \Lambda \subseteq \Delta}}\|\Phi_{\Lambda}\|_{\infty}
- \sum\limits_{\substack{\Lambda \in \mathscr{S}_{0} \\ \Lambda \subseteq \Delta_{0}}}\|\Phi_{\Lambda}\|_{\infty} < \frac{\epsilon}{2}	
\end{equation}	
holds whenever $\Delta$ belongs to $\mathscr{S}$ and satisfies $\Delta_{0} \subseteq \Delta$. It follows that for every $\Delta$ and $\Delta'$ in $\mathscr{S}$ such that
$\Delta_{0} \subseteq \Delta$ and $\Delta_{0} \subseteq \Delta'$, we have
\begin{eqnarray*}
\left\|\sum\limits_{\substack{\Lambda \in \mathscr{S}_{0} \\ \Lambda \subseteq \Delta}}\Phi_{\Lambda}- \sum\limits_{\substack{\Lambda \in \mathscr{S}_{0} \\ \Lambda \subseteq \Delta'}}\Phi_{\Lambda}\right\|_{\infty} 	
&=& \left\|\sum\limits_{\substack{\Lambda \in \mathscr{S}_{0},\,\Lambda \cap \Delta_{0}^{c} \neq \emptyset \\ \Lambda \subseteq \Delta}}\Phi_{\Lambda} - \sum\limits_{\substack{\Lambda \in \mathscr{S}_{0},\,\Lambda \cap \Delta_{0}^{c} \neq \emptyset \\ \Lambda \subseteq \Delta'}}\Phi_{\Lambda}\right\|_{\infty}\\
&\leq& \sum\limits_{\substack{\Lambda \in \mathscr{S}_{0},\,\Lambda \cap \Delta_{0}^{c} \neq \emptyset \\ \Lambda \subseteq \Delta}}\|\Phi_{\Lambda}\|_{\infty} + \sum\limits_{\substack{\Lambda \in \mathscr{S}_{0},\,\Lambda \cap \Delta_{0}^{c} \neq \emptyset \\ \Lambda \subseteq \Delta'}}\|\Phi_{\Lambda}\|_{\infty} \\
&<& \epsilon.
\end{eqnarray*}
We conclude that $\left(\sum\limits_{\substack{\Lambda \in \mathscr{S}_{0}\\ \Lambda \subseteq \Delta}}\Phi_{\Lambda}\right)_{\Delta \in \mathscr{S}}$ is a Cauchy net on the space of all real-valued bounded functions 
on $X$, thus part (a) follows.

For part (b), observe that since $|c_{n}(\Lambda)| \leq C$ holds for each $\Lambda$ and each $n$, it follows that $|c(\Lambda)| \leq C$ holds for each $\Lambda$. Thus
$\sum\limits_{\Lambda \in \mathscr{S}_{0}} \|c(\Lambda)\Phi_{\Lambda}\|_{\infty}$ converges, as well as each
sum $\sum\limits_{\Lambda \in \mathscr{S}_{0}} \|c_{n}(\Lambda)\Phi_{\Lambda}\|_{\infty}$. 

For each positive integer $n$ and every $\Delta$ and $\Delta_{0}$ in $\mathscr{S}$ such that $\Delta_{0}\subseteq \Delta$, we have
\begin{eqnarray*}
\Bigg\|\sum\limits_{\Lambda \in \mathscr{S}_{0}} c_{n}(\Lambda)\Phi_{\Lambda}&-&\sum\limits_{\Lambda \in \mathscr{S}_{0}}c(\Lambda)\Phi_{\Lambda}\Bigg\|_{\infty} \\
&\leq& \left\|\sum\limits_{\Lambda \in \mathscr{S}_{0}}c_{n}(\Lambda)\Phi_{\Lambda} - \sum\limits_{\substack{\Lambda \in \mathscr{S}_{0} \\ \Lambda \subseteq \Delta}}c_{n}(\Lambda)\Phi_{\Lambda}\right\|_{\infty} + \left\|\sum\limits_{\substack{\Lambda \in \mathscr{S}_{0} \\ \Lambda \subseteq \Delta}}c_{n}(\Lambda)\Phi_{\Lambda} - \sum\limits_{\substack{\Lambda \in \mathscr{S}_{0}\\ \Lambda \subseteq \Delta_{0}}}c_{n}(\Lambda)\Phi_{\Lambda}\right\|_{\infty} \\	
&&+ \left\|\sum\limits_{\substack{\Lambda \in \mathscr{S}_{0} \\ \Lambda \subseteq \Delta_{0}}}c_{n}(\Lambda)\Phi_{\Lambda} - \sum\limits_{\substack{\Lambda \in \mathscr{S}_{0} \\ \Lambda \subseteq \Delta_{0}}}c(\Lambda)\Phi_{\Lambda}\right\|_{\infty} + \left\|\sum\limits_{\substack{\Lambda \in \mathscr{S}_{0} \\ \Lambda \subseteq \Delta_{0}}}c(\Lambda)\Phi_{\Lambda} - \sum\limits_{\Lambda \in \mathscr{S}_{0}}c(\Lambda)\Phi_{\Lambda}\right\|_{\infty} \\
&\leq& \left\|\sum\limits_{\Lambda \in \mathscr{S}_{0}}c_{n}(\Lambda)\Phi_{\Lambda} - \sum\limits_{\substack{\Lambda \in \mathscr{S}_{0} \\ \Lambda \subseteq \Delta}}c_{n}(\Lambda)\Phi_{\Lambda}\right\|_{\infty} + \left\|\sum\limits_{\substack{\Lambda \in \mathscr{S}_{0},\, \Lambda \cap \Delta_{0}^{c} \neq \emptyset \\ \Lambda \subseteq \Delta}}c_{n}(\Lambda) \Phi_{\Lambda}\right\|_{\infty} \\	
&&+ \left\|\sum\limits_{\substack{\Lambda \in \mathscr{S}_{0} \\ \Lambda \subseteq \Delta_{0}}}(c_{n}(\Lambda) - c(\Lambda))\Phi_{\Lambda}\right\|_{\infty} + \left\|\sum\limits_{\substack{\Lambda \in \mathscr{S}_{0} \\ \Lambda \subseteq \Delta_{0}}}c(\Lambda)\Phi_{\Lambda} - \sum\limits_{\Lambda \in \mathscr{S}_{0}}c(\Lambda)\Phi_{\Lambda}\right\|_{\infty} \\
&\leq& \left\|\sum\limits_{\Lambda \in \mathscr{S}_{0}}c_{n}(\Lambda)\Phi_{\Lambda} - \sum\limits_{\substack{\Lambda \in \mathscr{S}_{0} \\ \Lambda \subseteq \Delta}}c_{n}(\Lambda)\Phi_{\Lambda}\right\|_{\infty} + C \sum\limits_{\substack{\Lambda \in \mathscr{S}_{0},\, \Lambda \cap \Delta_{0}^{c} \neq \emptyset \\ \Lambda \subseteq \Delta}}\|\Phi_{\Lambda}\|_{\infty} \\	
&&+ \max\limits_{\substack{\Lambda \in \mathscr{S}_{0} \\ \Lambda \subseteq \Delta_{0}}}|c_{n}(\Lambda) - c(\Lambda)| \cdot \sum\limits_{\substack{\Lambda \in \mathscr{S}_{0} \\ \Lambda \subseteq \Delta_{0}}} \|\Phi_{\Lambda}\|_{\infty} + \left\|\sum\limits_{\substack{\Lambda \in \mathscr{S}_{0} \\ \Lambda \subseteq \Delta_{0}}}c(\Lambda)\Phi_{\Lambda} - \sum\limits_{\Lambda \in \mathscr{S}_{0}}c(\Lambda)\Phi_{\Lambda}\right\|_{\infty}. 
\end{eqnarray*}
According to part (a), for each positive number $\epsilon$ there is an element $\Delta_{0}$ of $\mathscr{S}$ such that 
\[\left\|\sum\limits_{\substack{\Lambda \in \mathscr{S}_{0} \\ \Lambda \subseteq \Delta_{0}}}c(\Lambda)\Phi_{\Lambda} - \sum\limits_{\Lambda \in \mathscr{S}_{0}}c(\Lambda)\Phi_{\Lambda}\right\|_{\infty} < \frac{\epsilon}{4}\]
and
\[C \sum\limits_{\substack{\Lambda \in \mathscr{S}_{0},\, \Lambda \cap \Delta_{0}^{c} \neq \emptyset \\ \Lambda \subseteq \Delta}}\|\Phi_{\Lambda}\|_{\infty} < \frac{\epsilon}{4}\]
holds for each $\Delta$ in $\mathscr{S}$ satisfying $\Delta_{0} \subseteq \Delta$. And also, we can find a positive integer $n_{0}$ such that 
\[\max\limits_{\substack{\Lambda \in \mathscr{S}_{0} \\ \Lambda \subseteq \Delta_{0}}}|c_{n}(\Lambda) - c(\Lambda)| \cdot \sum\limits_{\substack{\Lambda \in \mathscr{S}_{0} \\ \Lambda \subseteq \Delta_{0}}} \|\Phi_{\Lambda}\|_{\infty} < \frac{\epsilon}{4}\]
holds whenever $n \geq n_{0}$.

We conclude that for every $n \geq n_{0}$, if we let $\Delta$ be an element of $\mathscr{S}$ such that $\Delta_{0} \subseteq \Delta$ and
\[\left\|\sum\limits_{\Lambda \in \mathscr{S}_{0}}c_{n}(\Lambda)\Phi_{\Lambda} - \sum\limits_{\substack{\Lambda \in \mathscr{S}_{0} \\ \Lambda \subseteq \Delta}}c_{n}(\Lambda)\Phi_{\Lambda}\right\|_{\infty} < \frac{\epsilon}{4},\]
we have
\[\left\|\sum\limits_{\Lambda \in \mathscr{S}_{0}} c_{n}(\Lambda)\Phi_{\Lambda}-\sum\limits_{\Lambda \in \mathscr{S}_{0}}c(A)\Phi_{\Lambda}\right\|_{\infty} < \epsilon.\]
\end{proof}

\begin{mydef}
An interaction potential is a family $\Phi = (\Phi_{\Lambda})_{\Lambda \in \mathscr{S}}$ of functions $\Phi_{\Lambda} : X \rightarrow \mathbb{R}$ such that
\begin{enumerate}[label=(\alph*),ref=\alph*]
\item for each $\Lambda \in \mathscr{S}$, the function $\Phi_{\Lambda}$ is $\mathscr{F}_{\Lambda}$-measurable, and	

\item for all $\Lambda \in \mathscr{S}$ and $x \in X$, the sum
\begin{equation}
H^{\Phi}_{\Lambda}(x) \defeq \sum\limits_{\Delta \in \mathscr{S},\,\Delta \cap \Lambda \neq \emptyset}\Phi_{\Delta}(x)	
\end{equation}
converges.
\end{enumerate}
The quantity $H^{\Phi}_{\Lambda}(x)$ is called the energy of $x$ in $\Lambda$ for the interaction potential $\Phi$, and  the Hamiltonian in $\Lambda$ for $\Phi$ is the function 
$H^{\Phi}_{\Lambda}$ which associates to each $x$ in $X$ the energy $H^{\Phi}_{\Lambda}(x)$. 
\end{mydef}
\begin{remark}\label{depdepdep}
Given an arbitrary subset $\Lambda$ of $\zd$ and a $\mathscr{F}_{\Lambda}$-measurable function $f :X \rightarrow \mathbb{R}$,
the equality $f(x) =  f(y)$ holds whenever $x$ and $y$ are elements of $X$ such that $x_{\Lambda} = y_{\Lambda}$. The reader can easily verify that 
it suffices to prove this result for characteristic functions. Observe that 
\[\{B \subseteq X : \text{$\rchi_{B}(x) = \rchi_{B}(y)$ holds whenever $x_{\Lambda} = y_{\Lambda}$}\}\]
is a $\sigma$-algebra of subsets of $X$ which contains the collection
\[\mathscr{C} = \{X \cap \pi_{i}^{-1}(C) : i \in \Lambda, A \subseteq \mathcal{A}\}.\]
Since $\mathscr{F}_{\Lambda}$ is generated by $\mathscr{C}$, the result follows.
\end{remark}

\begin{ex}\label{interactionising}
Let $X$ be the full shift $\{-1,+1\}^{\zd}$. Given two parameters $J$ and $h$ in $\mathbb{R}$, let us consider
\begin{equation}
\Phi^{J,h}_{\Lambda}(x) = 
\begin{cases}
-J x_{i}x_{j} &\text{if}\; \Lambda=\{i,j\}\; \text{and}\; i \sim j,\\
-h x_{i} &\text{if}\; \Lambda=\{i\},\\
0 &\text{otherwise.}	
\end{cases}	
\end{equation}	
The equation above defines an interaction potential $\Phi^{J,h} = (\Phi^{J,h}_{\Lambda})_{\Lambda \in \mathscr{S}}$ is called the Ising potential  
with coupling constant $J$ and external field $h$.
\end{ex}

\begin{mydef}
An interaction potential $\Phi = (\Phi_{\Lambda})_{\Lambda \in \mathscr{S}}$ is said to be 
\begin{enumerate}[label=(\alph*),ref=\alph*]
\item translation invariant if the relation
\begin{equation}
\Phi_{\Lambda} \circ T^{i} = \Phi_{\Lambda + i}	
\end{equation}
holds for each $\Lambda \in \mathscr{S}$ and each $i \in \zd$, and 

\item absolutely summable if each $\Phi_{\Lambda}$ is bounded and satisfies 
\begin{equation}
\sum\limits_{\Lambda \in \mathscr{S},\, i \in \Lambda} \|\Phi_{\Lambda}\|_{\infty} < +\infty
\end{equation}
for each $i \in \zd$.
\end{enumerate}
\end{mydef}

\begin{remark}
\begin{enumerate}[label=(\alph*),ref=\alph*]
\item Observe that if $\Phi$ is absolutely summable, then the sum
$\sum\limits_{\Delta \in \mathscr{S},\,\Delta \cap \Lambda \neq \emptyset}\|\Phi_{\Delta}\|_{\infty}$
converges for each $\Lambda$ in $\mathscr{S}$. In fact, we have
\[\sum\limits_{\substack{\Delta \in \mathscr{S},\,\Delta \cap \Lambda \neq \emptyset \\ \Delta \subseteq \Delta'}}\|\Phi_{\Delta}\|_{\infty} \leq 
\sum_{i \in \Lambda}\sum\limits_{\substack{\Delta \in \mathscr{S},\, i \in \Delta \\ \Delta \subseteq \Delta'}}\|\Phi_{\Delta}\|_{\infty} 
\leq \sum_{i \in \Lambda}\sum\limits_{\Delta \in \mathscr{S},\, i \in \Delta}\|\Phi_{\Delta}\|_{\infty}\]
for each $\Delta'$ in $\mathscr{S}$. Thus, our assertion follows from Remark \ref{posisummmmm}.

\item One can easily verify that the potential $\Phi^{J,h}$ given in Example \ref{interactionising} is translation invariant and absolutely summable.
\end{enumerate}
\end{remark}

In the following, given an absolutely summable potential $\Phi$, we will let $A_{\Phi}$ be a real-valued function defined on $X$ given by
\begin{equation}
A_{\Phi}(x) = -\sum\limits_{\Lambda \in \mathscr{S},\,\mathbf{0} \in \Lambda}\frac{1}{|\Lambda|}\,\Phi_{\Lambda}.	
\end{equation}
Observe that $A_{\Phi}$ is well defined since $\sum\limits_{\Lambda \in \mathscr{S},\,\mathbf{0} \in \Lambda}\frac{1}{|\Lambda|}\|\Phi_{\Lambda}\|_{\infty} \leq
\sum\limits_{\Lambda \in \mathscr{S},\,\mathbf{0} \in \Lambda}\|\Phi_{\Lambda}\|_{\infty} < +\infty$.

\begin{ex}
Let $\Phi^{J,h}$ be the Ising potential defined in Example \ref{interactionising}. Then, the function $A_{\Phi^{J,h}}$ is given by
\begin{equation}
A_{\Phi^{J,h}}(x) = \frac{J}{2} \sum\limits_{j\, \sim\, \bf{0}}x_{\bf{0}}x_j  + h x_{\bf{0}}	
\end{equation}	
for each $x$ in $\{-1,+1\}^{\zd}$. Observe that $A_{\Phi^{J,h}}$ coincides with $f^{J,h}$ (see Example \ref{ising2}).
\end{ex}

\begin{teo}
Let $X \subseteq \mathcal{A}^{\zd}$ be a subshift, and let $\Phi$ be a translation invariant and absolutely summable potential. If we suppose that
the function $f = A_{\Phi}$ belongs to $SV_{d}(X)$, then the corresponding family $\gamma = (\gamma_{\Lambda})_{\Lambda \in \mathscr{S}}$ defined in Definition \ref{especificacao15}
is given by
\begin{equation}
\gamma_{\Lambda}(A|x) = \frac{1}{Z^{\Phi}_{\Lambda}(x)} \int_{\mathcal{A}^{\Lambda}}e^{-H^{\Phi}_{\Lambda}(\zeta x_{\Lambda^{c}})}\bold{1}_{\{\zeta x_{\Lambda^{c}} \in A\}} \lambda^{\Lambda}(d\zeta)	
\end{equation} 	
where $\lambda$ is the uniform measure on $(\mathcal{A},\mathcal{P}(\mathcal{A}))$, and 
\begin{equation}
Z^{\Phi}_{\Lambda}(x) = \int_{\mathcal{A}^{\Lambda}}e^{-H^{\Phi}_{\Lambda}(\zeta x_{\Lambda^{c}})}\bold{1}_{\{\zeta x_{\Lambda^{c}} \in X\}} \lambda^{\Lambda}(d\zeta).		
\end{equation}
\end{teo}
\begin{proof}
Let $\Lambda$ be an element of $\mathscr{S}$, let $A$ be a Borel subset of $X$, and let $x$ be a point in $X$. For each positive integer $n$, we have
\begin{eqnarray*}
f_{n} &=& \sum\limits_{i \in \Lambda_{n}}f \circ T^{i}(x) \\ &=& -\sum\limits_{i \in \Lambda_{n}}\sum\limits_{\Delta \in \mathscr{S},\,\mathbf{0} \in \Delta} \frac{1}{|\Delta|}\,\Phi_{\Delta}\circ T^{i}(x)\\
&=& -\sum\limits_{i \in \Lambda_{n}}\sum\limits_{\Delta \in \mathscr{S},\,\mathbf{0} \in \Delta} \frac{1}{|\Delta + i|}\,\Phi_{\Delta + i}(x) \\
&=& -\sum\limits_{i \in \Lambda_{n}}\sum\limits_{\Delta \in \mathscr{S},\,i \in \Delta} \frac{1}{|\Delta|}\,\Phi_{\Delta}(x) \\
&=& -\sum\limits_{i \in \Lambda_{n}}\sum\limits_{\Delta \in \mathscr{S}} \frac{1}{|\Delta|}\,\Phi_{\Delta}(x) \bold{1}_{\{i \in \Delta\}} \\
&=& -\sum\limits_{\Delta \in \mathscr{S}} \frac{|\Delta \cap \Lambda_{n}|}{|\Delta|}\,\Phi_{\Delta}(x) \\
&=& -\sum\limits_{\Delta \in \mathscr{S},\,\Delta \cap \Lambda \neq \emptyset} \frac{|\Delta \cap \Lambda_{n}|}{|\Delta|}\,\Phi_{\Delta}(x) \; 
-\sum\limits_{\Delta \in \mathscr{S},\,\Delta \cap \Lambda = \emptyset} \frac{|\Delta \cap \Lambda_{n}|}{|\Delta|}\,\Phi_{\Delta}(x).
\end{eqnarray*}	
Using Remark \ref{depdepdep}, we obtain
\begin{eqnarray*}
&&\frac{\sum\limits_{\omega \in \mathcal{A}^{\Lambda}}e^{f_n(\omega x_{\Lambda^c})}\bold{1}_{\{\omega x_{\Lambda^c}\in A\}}}{\sum\limits_{\eta\, \in \mathcal{A}^{\Lambda}}e^{f_n(\eta x_{\Lambda^c})}\bold{1}_{\{\eta x_{\Lambda^c}\in X\}}} \\
&& \qquad\quad= \frac{\sum\limits_{\omega \in \mathcal{A}^{\Lambda}}\exp{\left(-\sum\limits_{\Delta \in \mathscr{S},\,\Delta \cap \Lambda \neq \emptyset} \frac{|\Delta \cap \Lambda_{n}|}{|\Delta|}\,\Phi_{\Delta}(\omega x_{\Lambda^c}) - 
\sum\limits_{\Delta \in \mathscr{S},\,\Delta \cap \Lambda = \emptyset} \frac{|\Delta \cap \Lambda_{n}|}{|\Delta|}\,\Phi_{\Delta}(\omega x_{\Lambda^c})\right)}\bold{1}_{\{\omega x_{\Lambda^c}\in A\}}}
{\sum\limits_{\eta\, \in \mathcal{A}^{\Lambda}}\exp{\left(-\sum\limits_{\Delta \in \mathscr{S},\,\Delta \cap \Lambda \neq \emptyset} \frac{|\Delta \cap \Lambda_{n}|}{|\Delta|}\,\Phi_{\Delta}(\eta x_{\Lambda^c}) -	 
\sum\limits_{\Delta \in \mathscr{S},\,\Delta \cap \Lambda = \emptyset} \frac{|\Delta \cap \Lambda_{n}|}{|\Delta|}\,\Phi_{\Delta}(\eta x_{\Lambda^c})\right)}\bold{1}_{\{\eta x_{\Lambda^c}\in X\}}}\\
&& \qquad\quad= \frac{\sum\limits_{\omega \in \mathcal{A}^{\Lambda}}\exp{\left(-\sum\limits_{\Delta \in \mathscr{S},\,\Delta \cap \Lambda \neq \emptyset} \frac{|\Delta \cap \Lambda_{n}|}{|\Delta|}\,\Phi_{\Delta}(\omega x_{\Lambda^c}) - 
\sum\limits_{\Delta \in \mathscr{S},\,\Delta \cap \Lambda = \emptyset} \frac{|\Delta \cap \Lambda_{n}|}{|\Delta|}\,\Phi_{\Delta}(x)\right)}\bold{1}_{\{\omega x_{\Lambda^c}\in A\}}}
{\sum\limits_{\eta\, \in \mathcal{A}^{\Lambda}}\exp{\left(-\sum\limits_{\Delta \in \mathscr{S},\,\Delta \cap \Lambda \neq \emptyset} \frac{|\Delta \cap \Lambda_{n}|}{|\Delta|}\,\Phi_{\Delta}(\eta x_{\Lambda^c}) -	 
\sum\limits_{\Delta \in \mathscr{S},\,\Delta \cap \Lambda = \emptyset} \frac{|\Delta \cap \Lambda_{n}|}{|\Delta|}\,\Phi_{\Delta}(x)\right)}\bold{1}_{\{\eta x_{\Lambda^c}\in X\}}}\\
&& \qquad\quad= \frac{\sum\limits_{\omega \in \mathcal{A}^{\Lambda}}\exp{\left(-\sum\limits_{\Delta \in \mathscr{S},\,\Delta \cap \Lambda \neq \emptyset} \frac{|\Delta \cap \Lambda_{n}|}{|\Delta|}\,\Phi_{\Delta}(\omega x_{\Lambda^c})\right)}\bold{1}_{\{\omega x_{\Lambda^c}\in A\}}}
{\sum\limits_{\eta\, \in \mathcal{A}^{\Lambda}}\exp{\left(-\sum\limits_{\Delta \in \mathscr{S},\,\Delta \cap \Lambda \neq \emptyset} \frac{|\Delta \cap \Lambda_{n}|}{|\Delta|}\,\Phi_{\Delta}(\eta x_{\Lambda^c})\right)}\bold{1}_{\{\eta x_{\Lambda^c}\in X\}}}, 	 	
\end{eqnarray*} 
and according to Lemma \ref{lemmamalucooo}(\ref{naointeressa}), we conclude that
\begin{eqnarray*}
\gamma_{\Lambda}(A|\,x) &=& \lim\limits_{n \to \infty}\frac{\sum\limits_{\omega \in \mathcal{A}^{\Lambda}}\exp{\left(-\sum\limits_{\Delta \in \mathscr{S},\,\Delta \cap \Lambda \neq \emptyset} \frac{|\Delta \cap \Lambda_{n}|}{|\Delta|}\,\Phi_{\Delta}(\omega x_{\Lambda^c})\right)}\bold{1}_{\{\omega x_{\Lambda^c}\in A\}}}
{\sum\limits_{\eta\, \in \mathcal{A}^{\Lambda}}\exp{\left(-\sum\limits_{\Delta \in \mathscr{S},\,\Delta \cap \Lambda \neq \emptyset} \frac{|\Delta \cap \Lambda_{n}|}{|\Delta|}\,\Phi_{\Delta}(\eta x_{\Lambda^c})\right)}\bold{1}_{\{\eta x_{\Lambda^c}\in X\}}} \\
&=& \frac{\sum\limits_{\omega \in \mathcal{A}^{\Lambda}}e^{-H^{\Phi}_{\Lambda}(\omega x_{\Lambda^{c}})}\bold{1}_{\{\omega x_{\Lambda^c}\in A\}}}{\sum\limits_{\eta \in \mathcal{A}^{\Lambda}}e^{-H^{\Phi}_{\Lambda}(\eta x_{\Lambda^{c}})}\bold{1}_{\{\eta x_{\Lambda^c}\in X\}}} \\
&=& \frac{\sum\limits_{\omega \in \mathcal{A}^{\Lambda}}e^{-H^{\Phi}_{\Lambda}(\omega x_{\Lambda^{c}})}\bold{1}_{\{\omega x_{\Lambda^c}\in A\}} \lambda^{\Lambda}(\{\omega\})}{\sum\limits_{\eta \in \mathcal{A}^{\Lambda}}e^{-H^{\Phi}_{\Lambda}(\eta x_{\Lambda^{c}})}\bold{1}_{\{\eta x_{\Lambda^c}\in X\}}\lambda^{\Lambda}(\{\eta\})}.
\end{eqnarray*}
\end{proof}

\renewcommand{\chaptermark}[1]{\markboth{\MakeUppercase{\appendixname\ \thechapter}} {\MakeUppercase{#1}} }
\fancyhead[RE,LO]{}
\appendix

%\chapter{The function $x \log x$}
%\label{xlogx}

%\begin{teo}
%The function $\phi : [0,+\infty) \rightarrow \mathbb{R}$ defined by
%\begin{equation}\label{xlogx}
%\phi(x) = 
%\begin{cases}
%0 &, \text{if}\; x = 0;\\
%x \log x &, \text{otherwise};
%\end{cases}
%%\end{equation}
%is strictly convex, i.e., $\phi\big(\alpha x + \beta y\big) \leq \alpha \phi(x) + \beta \phi(y)$ for every  $x, y \in [0,+\infty)$ and $\alpha, \beta \geq 0$ such that $\alpha + \beta =1$; 
%and the equality holds if and only if $x=y$ or $\alpha = 0$ or $\beta=0$.
%\end{teo}

%\begin{proof}

%\end{proof}

\chapter{Unordered Sums}\label{unsum}
\label{ape}

\section{Nets}

In order to study unordered sums, we need to introduce the idea of a net, also called a Moore-Smith sequence. Let us start by presenting the definition
of a directed set.

\begin{mydef}
A directed set is a set $S$ together with a preorder relation $\preceq$ such that any two elements have an upper bound. In other words, $\preceq$ is a binary relation 
on $S$ such that
\begin{itemize}
\item[(i)] $x \preceq x$ holds for each $x$ in $S$,

\item[(ii)] if $x, y,$ and $z$ belong $S$ and the conditions $x \preceq y$ and $y \preceq z$ are satisfied, then $x \preceq z$, and

\item[(iii)] for each $x$ and $y$ in $S$ there is an element $z$ of $S$ such that $x \preceq z$ and $y \preceq z$.
\end{itemize}
\end{mydef}

The following example will be of great importance in the next section.

\begin{ex}
Let $A$ be an arbitrary set and let $\mathscr{S}_A \defeq \{I \subseteq A: I\;\text{is a finite set}\}$. It is straightforward to check
that $\mathscr{S}_{A}$ is directed by inclusion.
\end{ex}

\begin{mydef}[Net]
A net in a topological space $X$ is a function $f$ from a directed set $S$ into $X$. If $f(\lambda) = x_\lambda$ for each $\lambda \in S$, then we will simply write
$(x_\lambda)_{\lambda \in S}$ instead of $f$.
\end{mydef}

\begin{mydef}[Convergence of a net]
Let $(x_\lambda)_{\lambda \in S}$ be a net in a topological space $X$. We will say that $(x_\lambda)_{\lambda \in S}$ converges to 
a point $x$ in $X$ if for each neighborhood $U$ of $x$ there is an element $\lambda_{0}$ of $S$ such that $x_{\lambda} \in U$ whenever $\lambda$ satisfies $\lambda_{0} \preceq \lambda$. 
\end{mydef}

\section{Unordered Sums}

In this section we introduce the concept of an unordered sum. If the reader is interested in the study of this subject, see \cite{hunter:07}.
The results presented in the following are used on Chapter \ref{cap:gibbs} in order to give a precise 
definition of a Gibbs measure. 

\begin{mydef}
Let $(x_\lambda )_{\lambda  \in A}$ be an arbitrary family of elements of a normed space $X$. We will say that the unordered sum 
$\sum\limits_{\lambda  \in A}x_\lambda $ converges to a point $x$ in $X$ if the net $\Big(\sum\limits_{\lambda  \in I}x_\lambda \Big)_{I \in \mathscr{S}_A}$ converges to
$x$.
\end{mydef} 

The next proposition follows immediately from the definition given above.

\begin{prop}
The unordered sum $\sum\limits_{\lambda \in A}x_\lambda$ in the normed space $(X,\|\cdot\|)$ converges to a point $x$ if and only if for each positive number $\epsilon$ there is a finite subset
$I_{0}$ of $A$ such that $\left\|\sum\limits_{\lambda \in I}x_\lambda - x\right\| < \epsilon$ holds whenever $I$ is a finite subset of $A$ satisfying $I_{0} \subseteq I$.
\end{prop}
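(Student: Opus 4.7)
The plan is to observe that this proposition is essentially a translation of the abstract net convergence definition into the familiar $\epsilon$-language afforded by a normed space, exploiting the fact that open balls centred at $x$ form a neighbourhood base at $x$.

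For the forward direction, I would assume that the net $\bigl(\sum_{\lambda \in I} x_\lambda\bigr)_{I \in \mathscr{S}_A}$ converges to $x$, fix an arbitrary $\epsilon > 0$, and consider the open ball $B(x,\epsilon) = \{y \in X : \|y - x\| < \epsilon\}$, which is a neighbourhood of $x$. Applying the definition of net convergence to this neighbourhood produces an element $I_0 \in \mathscr{S}_A$ (that is, a finite subset of $A$) such that $\sum_{\lambda \in I} x_\lambda \in B(x,\epsilon)$ whenever $I_0 \preceq I$ in $\mathscr{S}_A$. Since the preorder on $\mathscr{S}_A$ is inclusion, this translates directly into: $\bigl\|\sum_{\lambda \in I} x_\lambda - x\bigr\| < \epsilon$ whenever $I$ is a finite subset of $A$ with $I_0 \subseteq I$.

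For the converse direction, I would assume the $\epsilon$-condition and verify the net convergence. Let $U$ be an arbitrary neighbourhood of $x$ in $X$. Because $X$ is a normed space, open balls centred at $x$ form a neighbourhood base, so there exists $\epsilon > 0$ with $B(x,\epsilon) \subseteq U$. Applying the hypothesis to this $\epsilon$ yields a finite subset $I_0 \subseteq A$ such that $\bigl\|\sum_{\lambda \in I} x_\lambda - x\bigr\| < \epsilon$, hence $\sum_{\lambda \in I} x_\lambda \in B(x,\epsilon) \subseteq U$, whenever $I_0 \subseteq I$. Recalling that inclusion is precisely the preorder on $\mathscr{S}_A$, this is exactly the statement that the net converges to $x$.

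There is no real obstacle here: the argument is a direct unfolding of the two definitions, with the only bookkeeping being the identification of the preorder on $\mathscr{S}_A$ with set inclusion (already noted in the preceding example) and the standard fact that $\epsilon$-balls form a neighbourhood base at $x$ in a normed space.
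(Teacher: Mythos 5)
Your proof is correct and is exactly the direct unfolding of the definitions that the paper has in mind when it dismisses the argument as ``straightforward''; the only content is identifying the preorder on $\mathscr{S}_A$ with inclusion and using that the balls $B(x,\epsilon)$ form a neighbourhood base at $x$, which you do properly in both directions.
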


\begin{proof}
The proof is straightforward.	
\end{proof}

In case we are dealing with unordered sums of nonnegative real numbers we have the following result.

\begin{cor}
Let $(x_{\lambda})_{\lambda \in A}$ be an arbitrary family of nonnegative real numbers. Then, the unordered sum $\sum\limits_{\lambda \in A} x_{\lambda}$ converges
if and only if $\sup\left\{\sum\limits_{\lambda \in I}x_{\lambda} : I \in \mathscr{S}_{A}\right\}$ is a finite number. In either case, we have 
\begin{equation}
\sum\limits_{\lambda \in A} x_{\lambda} = \sup\left\{\sum\limits_{\lambda \in I}x_{\lambda} : I \in \mathscr{S}_{A}\right\}.
\end{equation} 
\end{cor}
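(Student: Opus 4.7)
The plan is to use the previous proposition as a bridge: it rephrases convergence of the unordered sum as an $\epsilon$-style criterion over finite subsets, which meshes perfectly with the definition of supremum. The key structural fact I would exploit throughout is monotonicity of finite partial sums: if $I \subseteq J$ are finite subsets of $A$ and each $x_\lambda \geq 0$, then $\sum_{\lambda \in I} x_\lambda \leq \sum_{\lambda \in J} x_\lambda$. This single observation makes both implications essentially routine. Let me denote $s \defeq \sup\{\sum_{\lambda \in I} x_\lambda : I \in \mathscr{S}_A\} \in [0,\infty]$.

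For the ``if'' direction, I would assume $s < \infty$ and show that the unordered sum converges to $s$. Given $\epsilon > 0$, by definition of supremum there exists $I_0 \in \mathscr{S}_A$ with $s - \epsilon < \sum_{\lambda \in I_0} x_\lambda \leq s$. For any finite $I \supseteq I_0$, monotonicity gives $s - \epsilon < \sum_{\lambda \in I_0} x_\lambda \leq \sum_{\lambda \in I} x_\lambda \leq s$, so $|\sum_{\lambda \in I} x_\lambda - s| < \epsilon$. Applying the preceding proposition, $\sum_{\lambda \in A} x_\lambda = s$.

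For the ``only if'' direction, suppose the sum converges to some $x \in \mathbb{R}$. Applying the proposition with $\epsilon = 1$ yields a finite $I_0 \subseteq A$ such that $|\sum_{\lambda \in I} x_\lambda - x| < 1$ whenever $I_0 \subseteq I \in \mathscr{S}_A$. For an arbitrary $J \in \mathscr{S}_A$, the set $I_0 \cup J$ is finite and contains $I_0$, so by nonnegativity and monotonicity,
\begin{equation*}
\sum_{\lambda \in J} x_\lambda \leq \sum_{\lambda \in I_0 \cup J} x_\lambda < x + 1.
\end{equation*}
Hence $s \leq x + 1 < \infty$, and the first part then forces $x = s$.

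The argument is almost entirely bookkeeping, so I do not expect a genuine obstacle; the only point requiring a little care is making sure that the net convergence in the proposition is invoked correctly, in particular noting that whenever one needs a ``large enough'' finite subset containing a prescribed $I_0$ one may form $I_0 \cup J$ and use monotonicity to pass back to $J$.
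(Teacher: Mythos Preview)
Your proof is correct and follows essentially the same approach as the paper: the ``if'' direction is nearly identical, exploiting monotonicity and the definition of supremum to trap the partial sums in $(s-\epsilon,\,s]$. For the ``only if'' direction the paper argues by contrapositive (if the supremum is infinite then the partial sums are unbounded along cofinal finite sets, so the net cannot converge), whereas you argue directly by bounding every finite partial sum by $x+1$ via the enlargement $J\mapsto I_0\cup J$; this is a minor stylistic variation rather than a different idea.
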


\begin{proof}
In the case where $\sup\left\{\sum\limits_{\lambda \in I}x_{\lambda} : I \in \mathscr{S}_{A}\right\} = + \infty$, for each positive integer $N$ there exists a finite subset $I_{0}$ of $A$ such that $N \leq \sum\limits_{\lambda \in I_{0}}x_{\lambda}$.
Then, for each finite subset $I$ of $A$ such that $I_{0} \subseteq I$, we have
\[N \leq \sum\limits_{\lambda \in I_{0}}x_{\lambda} \leq \sum\limits_{\lambda \in I}x_{\lambda}.\]
It follows that the unordered sum $\sum\limits_{\lambda \in A} x_{\lambda}$ does not converge. 

On the other hand, if  $\sup\left\{\sum\limits_{\lambda \in I}x_{\lambda} : I \in \mathscr{S}_{A}\right\} < + \infty$, then for all positive number $\epsilon$ there is a
finite subset $I_{0}$ of $A$ such that $x - \epsilon < \sum\limits_{\lambda \in I_{0}}x_{\lambda} \leq x$. Thus, 
\[x - \epsilon < \sum\limits_{\lambda \in I_{0}}x_{\lambda} \leq \sum\limits_{\lambda \in I}x_{\lambda} < x + \epsilon \]
holds whenever $I$ is a finite subset of $A$ such that $I_{0} \subseteq I$.
\end{proof}

%\begin{proof}
%Given a positive number $\epsilon$, let us consider the neighborhood $U_{x,\epsilon} = \{y\in X : \|y-x\|<\epsilon\}$ of $x$.
%Let $I_{0}$ be the element of $\mathscr{S}_{A}$ corresponding to the neighborhood $U_{x,\epsilon}$. It follows that
%for all finite subset $I$ with $I_{0} \subseteq I$ we have $\left\|\sum\limits_{\lambda\in I}x_\lambda - x\|\right| < \epsilon$.

%$(\Leftarrow)$ Given a neighborhood $U$ of $x$, take $\epsilon >0$ such that $\{y\in X : \|y-x\|<\epsilon\} \subseteq U$. Then the
%second implication follows.
%\end{proof}

The results presented in the following characterize the convergence of unordered sums of countable families of elements of a normed space.

\begin{teo}\label{teosum}
In the case where $A= \mathbb{N}$, the unordered sum $\sum\limits_{n \in \mathbb{N}}x_n$ in the normed space $X$ converges to a point $x$ if and only if
the series $\sum\limits_{n = 1}^{\infty} x_{\sigma(n)}$ converges to $x$ for every permutation $\sigma : \mathbb{N} \rightarrow \mathbb{N}$.
\end{teo}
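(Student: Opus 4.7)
The plan is to handle each implication separately, with both relying on the finite-set characterization of unordered convergence given in the preceding proposition.

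For the forward implication, suppose the unordered sum converges to $x$. Given any permutation $\sigma$ and $\epsilon > 0$, I would invoke the preceding proposition to obtain a finite set $I_0 \subseteq \mathbb{N}$ such that $\left\|\sum_{n \in I} x_n - x\right\| < \epsilon$ whenever $I$ is finite and $I_0 \subseteq I$. Since $\sigma$ is a bijection, the preimage $\sigma^{-1}(I_0)$ is finite, and if I set $N = \max \sigma^{-1}(I_0)$, then for every $n \geq N$ the finite set $\{\sigma(1), \ldots, \sigma(n)\}$ contains $I_0$. Applying the proposition to this set yields $\left\|\sum_{k=1}^n x_{\sigma(k)} - x\right\| < \epsilon$, which is the desired convergence.

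For the reverse implication I would argue by contrapositive: assume the unordered sum does not converge to $x$, and produce a permutation for which the series fails to converge to $x$. By negating the characterization from the preceding proposition, there exists $\epsilon_0 > 0$ such that for every finite $F \subseteq \mathbb{N}$ one can find a finite $J \supseteq F$ with $\left\|\sum_{n \in J} x_n - x\right\| \geq \epsilon_0$. I would then build finite sets $J_1 \subsetneq J_2 \subsetneq \cdots$ inductively: at step $j$, let $F_j = J_{j-1} \cup \{1, 2, \ldots, j\}$ (with $J_0 = \emptyset$), and choose $J_j \supseteq F_j$ with $\left\|\sum_{n \in J_j} x_n - x\right\| \geq \epsilon_0$. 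The inclusion of $\{1, \ldots, j\}$ guarantees $\bigcup_j J_j = \mathbb{N}$. I would then define the permutation $\sigma$ by enumerating $J_1$ in any order as $\sigma(1), \ldots, \sigma(m_1)$, then enumerating $J_2 \setminus J_1$ as $\sigma(m_1+1), \ldots, \sigma(m_2)$, and so on. Each block ends at an index $m_j$ with $\sum_{k=1}^{m_j} x_{\sigma(k)} = \sum_{n \in J_j} x_n$, so $\left\|\sum_{k=1}^{m_j} x_{\sigma(k)} - x\right\| \geq \epsilon_0$ for every $j$, and hence the rearranged series does not converge to $x$.

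The main obstacle is the second direction: one must carefully choreograph the inductive construction so that the resulting $\sigma$ exhausts all of $\mathbb{N}$ (a bijection) while simultaneously displaying infinitely many partial sums bounded away from $x$. The trick of padding $F_j$ with the initial segment $\{1, \ldots, j\}$ handles the bijection requirement, while the failure of unordered convergence supplies the enlargement that preserves the $\epsilon_0$-gap. The rest of the argument is bookkeeping and a direct application of the finite-set characterization.
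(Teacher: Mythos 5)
Your proof is correct, and the forward implication is essentially identical to the paper's. The backward implication, however, takes a genuinely different route. The paper argues by contrapositive too, but it first splits into cases according to whether the naturally ordered partial sums $s_n$ converge to $x$ (if not, the identity permutation already witnesses failure), and in the convergent case it runs a more intricate induction: it chooses indices $n_l$ where $\|s_{n_l}-x\|<\epsilon$, builds initial segments $F_l=\{1,\dots,n_l\}$ together with enlarged ``bad'' sets $\widetilde{F_l}$, and patches together bijections $\sigma_l$ of $F_l$ by inserting increasing enumerations of $\widetilde{F_l}\setminus F_l$ and $F_{l+1}\setminus\widetilde{F_l}$, so that the rearranged partial sums at the positions $m_l=|\widetilde{F_l}|$ equal $\sum_{n\in\widetilde{F_l}}x_n$ and stay $\epsilon$-far from $x$. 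Your construction dispenses with the case split and the convergence of $(s_n)$ altogether: by padding the $j$-th stage with the initial segment $\{1,\dots,j\}$ you force the nested bad sets $J_j$ to exhaust $\mathbb{N}$, and enumerating the blocks $J_j\setminus J_{j-1}$ consecutively gives a permutation whose partial sums at the positions $m_j=|J_j|\ge j$ are bounded away from $x$; since $m_j\to\infty$, the rearranged series cannot converge to $x$. This is cleaner and buys a shorter argument with less bookkeeping; the paper's version, by anchoring at indices where $s_n$ is close to $x$ and using ordered enumerations, keeps tighter control of the permutation but at the cost of the case analysis. Two cosmetic points you may wish to smooth out: in the forward direction the set $I_0$ should be taken nonempty (as the paper remarks) so that $\max\sigma^{-1}(I_0)$ makes sense, and the inclusions $J_{j-1}\subseteq J_j$ need not be strict (a block may be empty), which is harmless since $|J_j|\ge j$ still drives $m_j$ to infinity.
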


\begin{proof}
Let us consider a permutation $\sigma: \mathbb{N} \rightarrow \mathbb{N}$. For each $\epsilon > 0$, we use the convergence of $\sum\limits_{n \in \mathbb{N}}x_n$ 
to choose a corresponding $I_{0}$ (note that it can be supposed to be nonempty).
If we let $N_0 = \max\{\sigma^{-1}(n) : n \in I_{0}\}$, then for every positive integer $N \geq N_0$ we have $I_{0} \subseteq \sigma(\{1,\dots,N\})$. Thus 
\[\left\|\sum\limits_{n=1}^N x_{\sigma(n)} - x\right\| = \left\|\sum\limits_{n \in \sigma(\{1,\dots,N\})}x_n - x\right\|<\epsilon\]
holds whenever $N$ is a positive integer such that $N \geq N_{0}$.

On the other hand, let us suppose that exists a positive number $\epsilon$ such that for each finite subset $I$ of $\mathbb{N}$ there is another finite subset $\widetilde{I}$ of $\mathbb{N}$ such that
$I \subseteq \widetilde{I}$ but $\left\|\sum\limits_{n \in \widetilde{I}}x_n - x\right\|\geq\epsilon$. Under this assumption, let us show that we can find a permutation $\sigma$ of $\mathbb{N}$ such that
the series $\sum\limits_{n = 1}^{\infty} x_{\sigma (n)}$ does not converges to $x$.
In order to do so, we need to consider the sequence $(s_n)_{n \in \mathbb{N}}$ of partial sums of the series $\sum\limits_{n = 1}^{\infty}x_{n}$. 

In the case where the sequence $(s_n)_{n \in \mathbb{N}}$ does not converges to $x$, if we let $\sigma$ be the identity mapping of $\mathbb{N}$, 
then the series $\sum\limits_{n = 1}^{\infty} x_{\sigma (n)}$ clearly does not converges to $x$.

Now, let us consider the case where the sequence $(s_n)_{n \in \mathbb{N}}$ converges to $x$. Let $n_1 = \min\{n \in \mathbb{N}: \|s_n - x\| < \epsilon\}$, let
$F_1 = \{n \in \mathbb{N}: n \leq n_1\}$, and let $\sigma_1 : F_1 \rightarrow F_1$ be a map given by $\sigma_1(n) = n$ for each $n$.
Using the hypothesis that $\sum\limits_{n \in \mathbb{N}}x_n$ does not converges to $x$, let us choose a finite subset $\widetilde{F_1}$ of positive integers corresponding to $F_1$. 
If $\widetilde{F_1}$ has $m_1$ elements, it follows that 
$n_1 < m_1$ (otherwise we would have $F_1 = \widetilde{F_1}$, which leads to a contradiction).
% and $m_{1} \leq \max \widetilde{F_{1}}$. 

Suppose that we have already defined two finite sets $F_l$ and $\widetilde{F_l}$ of positive integers, where $F_l$ is properly contained in $\widetilde{F_l}$ and each of them contains $n_{l}$ and $m_{l}$ elements, respectively,  
and a bijection $\sigma_l : F_l \rightarrow F_l$. 
Then, let $n_{l+1} = \min\left\{n > \max \widetilde{F_{l}}: \|s_n - x\| < \epsilon\right\}$, let
$F_{l+1} = \{n \in \mathbb{N}: n \leq n_{l+1}\}$, and let
$\sigma_{l+1} : F_{l+1} \rightarrow F_{l+1}$ be a map defined by letting 
$\sigma_{l+1}(n) = \sigma_l(n)$ for each $n$ in $F_l$, $\sigma_{l+1}(n_l+1)< \cdots < \sigma_{l+1}(m_l)$ an increasing enumeration of $\widetilde{F_{l}} \backslash F_{l}$, and 
$\sigma_{l+1}(m_l+1) < \cdots < \sigma_{l+1}(n_{l+1})$ an increasing enumeration of $F_{l+1}\backslash \widetilde{F_{l}}$. It is easy to check that
$\sigma_{l+1}$ is a bijection.
Again, let us use the hypothesis that $\sum\limits_{n \in \mathbb{N}}x_n$ does not converges to $x$ to choose 
a finite subset $\widetilde{F_{l+1}}$ of positive integers corresponding to $F_{l+1}$. If $\widetilde{F_{l+1}}$ has $m_{l+1}$ elements, it follows that 
$n_{l+1} < m_{l+1}$ (otherwise we would have $F_{l+1} = \widetilde{F_{l+1}}$, which leads to a contradiction). 
%and $m_{l+1} \leq \max \widetilde{F_{l+1}}$. 

At the end, we obtained positive integers $n_1< m_1< \cdots < n_l < m_l < n_{l+1}< m_{l+1} < \cdots$ and bijections $\sigma_l: F_l \rightarrow F_l$ for each $l$ in $\mathbb{N}$,
where $\sigma_{l+1}\restriction_{F_{l}} = \sigma_{l}$.
It is possible to define another bijection $\sigma: \mathbb{N} \rightarrow \mathbb{N}$ such that the identity ${\sigma\restriction}_{F_l} = \sigma_l$ holds for each $l$. Thus, we have
\[\left\|\sum\limits_{n=1}^{m_l}x_{\sigma(n)} - x\right\| = \left\|\sum\limits_{n=1}^{m_l}x_{\sigma_{l+1}(n)} - x\right\| = \left\|\sum\limits_{n \in \widetilde{F_l}}x_{n} - x\right\| \geq \epsilon\] 
for every positive integer $l$.
We conclude that there is a permutation $\sigma : \mathbb{N} \rightarrow \mathbb{N}$ such that the series $\sum\limits_{n = 1}^{\infty} x_{\sigma(n)}$ does not converges to $x$.
\end{proof}

\begin{cor}\label{cor}
In the case where $A$ is a countably infinite set, the unordered sum $\sum\limits_{\lambda \in A}x_\lambda$ in the normed space $X$ converges to a point $x$ if and only if
the series $\sum\limits_{n = 1}^{\infty} x_{\sigma(n)}$ converges to $x$ for any bijection $\sigma: \mathbb{N} \rightarrow A$.
\end{cor}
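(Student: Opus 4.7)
The plan is to reduce the statement to Theorem \ref{teosum} by transporting everything from $A$ to $\mathbb{N}$ via an auxiliary bijection. Since $A$ is countably infinite, I would fix once and for all a bijection $\tau : \mathbb{N} \rightarrow A$ and define $y_{n} \defeq x_{\tau(n)}$ for each $n$ in $\mathbb{N}$.

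First I would show that the unordered sum $\sum_{\lambda \in A} x_{\lambda}$ converges to $x$ if and only if the unordered sum $\sum_{n \in \mathbb{N}} y_{n}$ converges to $x$. This should be essentially immediate from the definition, once one observes that $\tau$ induces a bijection $I \mapsto \tau(I)$ between $\mathscr{S}_{\mathbb{N}}$ and $\mathscr{S}_{A}$ which preserves inclusion, and that the partial sums satisfy $\sum_{n \in I} y_{n} = \sum_{\lambda \in \tau(I)} x_{\lambda}$. Using the characterization from the proposition in the previous section, the $\epsilon$-$I_{0}$ condition for one sum translates directly into the corresponding condition for the other.

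Next, by Theorem \ref{teosum} applied to the family $(y_{n})_{n \in \mathbb{N}}$, the unordered sum $\sum_{n \in \mathbb{N}} y_{n}$ converges to $x$ if and only if for every permutation $\pi : \mathbb{N} \rightarrow \mathbb{N}$ the series $\sum_{n = 1}^{\infty} y_{\pi(n)}$ converges to $x$. The final ingredient is the observation that the correspondence $\pi \mapsto \tau \circ \pi$ is a bijection between the set of permutations of $\mathbb{N}$ and the set of bijections $\sigma : \mathbb{N} \rightarrow A$, and that under this correspondence
\[ y_{\pi(n)} = x_{\tau(\pi(n))} = x_{\sigma(n)} \]
holds for every $n$ in $\mathbb{N}$, so the two series coincide term by term. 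Combining the three equivalences yields the claim.

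There is no real obstacle here; the content of the corollary is already contained in Theorem \ref{teosum}, and the only thing to watch is that the transport along $\tau$ genuinely sets up the bijective correspondence between permutations of $\mathbb{N}$ and bijections $\sigma : \mathbb{N} \rightarrow A$, which I would make explicit to keep the argument clean.
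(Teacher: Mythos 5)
Your proof is correct and follows essentially the same route as the paper: reduce to Theorem \ref{teosum} by transporting along a bijection between $\mathbb{N}$ and $A$ (so that finite partial sums and the $\epsilon$--$I_0$ condition match up) and by identifying permutations of $\mathbb{N}$ with bijections $\mathbb{N} \rightarrow A$ via composition. The only cosmetic difference is that you also deduce the forward implication from Theorem \ref{teosum} through this transport, whereas the paper handles that direction by repeating the direct argument from the first part of the theorem's proof.
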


\begin{proof}
The first part of this proof is completely analogous to the first part of the proof of Theorem \ref{teosum}. Thus, we only need to prove the second part. Let us consider a bijection $\sigma : \mathbb{N} \rightarrow A$.
For every permutation $\pi : \mathbb{N} \rightarrow \mathbb{N}$ we have $\sum\limits_{n=1}^{\infty}x_{\sigma(\pi(n))} = \sum\limits_{n=1}^{\infty}x_{(\sigma \circ \pi)(n)} = x$,
then using Theorem \ref{teosum} we conclude that the unordered sum $\sum\limits_{n \in \mathbb{N}}x_{\sigma(n)}$ converges to $x$. It follows that for each positive number $\epsilon$ there is a finite subset $J_{0}$ of $\mathbb{N}$ such that
\[\left\| \sum\limits_{n \in J}x_{\sigma (n) }- x \right\|< \epsilon\] 
holds whenever $J$ is a finite subset of $\mathbb{N}$ satisfying $J_{0} \subseteq J$.
If we let $I_0 = \sigma (J_{0})$, then for every finite subset $I$ of $A$ such that $I_{0} \subseteq I$, we have
\[\left\| \sum\limits_{\lambda \in I}x_{\lambda}- x \right\| = \left\| \sum\limits_{n \in \sigma^{-1}(I)}x_{\sigma (n)} - x \right\| < \epsilon.\]
\end{proof}

The next result is of great importance for Chapter \ref{cap:gibbs} since it allow us to define the concept of a Gibbs measure.

\begin{cor}\label{svdap}
Let $(x_{k})_{k \in \zd}$ be a family of real numbers indexed by $\zd$.
The unordered sum $\sum\limits_{k \in \zd}x_k$ converges to a real number if the limit $\lim\limits_{N \to \infty}\sum\limits_{k \in \Lambda_N}|x_k|$ converges.
\end{cor}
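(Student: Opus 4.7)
The plan is to reduce to the case of nonnegative families, where the preceding corollary characterizes convergence via the supremum over finite subsets. Once the unordered sum of absolute values is known to converge, splitting $x_k$ into positive and negative parts and invoking linearity of unordered sums finishes the argument.

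First, I would show that $\sum_{k \in \mathbb{Z}^d}|x_k|$ converges as an unordered sum. Since the boxes $\Lambda_N$ form an increasing exhaustion of $\mathbb{Z}^d$ by finite sets, every finite $I \subseteq \mathbb{Z}^d$ is contained in some $\Lambda_N$, so
\[
\sum_{k \in I}|x_k| \;\leq\; \sum_{k \in \Lambda_N}|x_k| \;\leq\; \lim_{M \to \infty}\sum_{k \in \Lambda_M}|x_k|,
\]
the right-hand side being a finite number by hypothesis. Hence $\sup\bigl\{\sum_{k \in I}|x_k| : I \in \mathscr{S}_{\mathbb{Z}^d}\bigr\} < +\infty$, and the preceding corollary on nonnegative families yields the convergence of $\sum_{k \in \mathbb{Z}^d}|x_k|$.

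Next, I would decompose $x_k = x_k^+ - x_k^-$ with $x_k^+ = \max\{x_k,0\}$ and $x_k^- = \max\{-x_k,0\}$. Both families are nonnegative and dominated termwise by $|x_k|$, so the same supremum bound applies and the same corollary delivers the convergence of the unordered sums $\sum_{k}x_k^+$ and $\sum_{k}x_k^-$ to some real numbers $S^+$ and $S^-$.

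The last step is a short linearity argument. Given $\epsilon > 0$, I would use the proposition preceding the earlier corollary to pick finite sets $I_0^+, I_0^- \subseteq \mathbb{Z}^d$ providing error below $\epsilon/2$ for $\sum x_k^+$ and $\sum x_k^-$ respectively, set $I_0 = I_0^+ \cup I_0^-$, and apply the triangle inequality to obtain
\[
\Bigl|\sum_{k \in I}x_k - (S^+ - S^-)\Bigr| \;\leq\; \Bigl|\sum_{k \in I}x_k^+ - S^+\Bigr| + \Bigl|\sum_{k \in I}x_k^- - S^-\Bigr| \;<\; \epsilon
\]
for every finite $I \supseteq I_0$, which gives convergence of $\sum_{k \in \mathbb{Z}^d}x_k$ to $S^+ - S^-$. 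The only genuinely nontrivial point is the passage from the sequential hypothesis along $(\Lambda_N)$ to the supremum over all finite subsets of $\mathbb{Z}^d$; the remainder is routine bookkeeping.
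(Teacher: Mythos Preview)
Your proof is correct but follows a genuinely different route from the paper's. The paper fixes an arbitrary bijection $\sigma:\mathbb{N}\to\mathbb{Z}^d$, bounds the partial sums $\sum_{n=1}^{N}|x_{\sigma(n)}|$ by $\sum_{k\in\Lambda_{N_0}}|x_k|$ for a suitable $N_0$, concludes that the series $\sum_{n=1}^{\infty}x_{\sigma(n)}$ converges absolutely, invokes the classical rearrangement theorem to see that every bijection gives the same limit, and then applies the preceding corollary characterizing unordered sums over countably infinite index sets via bijections. You instead remain entirely within the net formalism: the supremum bound plus the corollary on nonnegative families, followed by the decomposition $x_k=x_k^+ - x_k^-$ and a direct $\epsilon/2$ linearity argument. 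The paper's approach leans on familiar series machinery and the countability of $\mathbb{Z}^d$; yours bypasses bijections and the rearrangement theorem altogether, is more self-contained relative to this appendix, and would work verbatim for any index set exhausted by an increasing family of finite sets.
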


\begin{proof}
Let us suppose that $\lim\limits_{N \to \infty}\sum\limits_{k \in \Lambda_N}|x_k|$ converges.	
Let $\sigma: \mathbb{N} \rightarrow \zd$ be a bijection. For each positive integer $N$, if we define $N_0 = \max\{\|\sigma (n)\| : 1 \leq n \leq N\} + 1$, then we have
\[\sum\limits_{n = 1}^{N}|x_{\sigma(n)}| \leq \sum\limits_{k \in \Lambda_{N_0}}|x_k|.\]
It follows that 
\[\sum\limits_{n = 1}^{N}|x_{\sigma(n)}| \leq \lim\limits_{N \to \infty}\sum\limits_{k \in \Lambda_N}|x_k| < +\infty\]
holds for every positive integer $N$, thus $\sum\limits_{n = 1}^{\infty}|x_{\sigma(n)}| < +\infty$.

Since the series $\sum\limits_{n = 1}^{\infty}x_{\sigma(n)}$ converges absolutely, it follows that the equality
$\sum\limits_{n = 1}^{\infty}x_{\sigma(n)} = \sum\limits_{n = 1}^{\infty}x_{\sigma(\pi(n))}$ holds for every permutation $\pi : \mathbb{N} \rightarrow \mathbb{N}$. 
Thus, if we let $\sigma': \mathbb{N} \rightarrow \zd$ be another bijection
and let $\pi = \sigma^{-1} \circ \sigma'$, we obtain 
\[\sum\limits_{n = 1}^{\infty}x_{\sigma(n)} = \sum\limits_{n = 1}^{\infty}x_{\sigma'(n)}.\]
Hence, Corolary \ref{cor} implies that $\sum\limits_{k \in \zd}x_k$ converges to a real number.
\end{proof}

\backmatter \singlespacing   % espaçamento simples
\bibliographystyle{alpha-ime}% citação bibliográfica alpha
\bibliography{bibliografia}  % associado ao arquivo: 'bibliografia.bib'

\nocite{meyerovitch:13}
\nocite{aaronson:07}
\nocite{feldman:77}
\nocite{ruelle:04}
\nocite{kechris:95}
\nocite{cohn:13}
\nocite{georgii:11}
\nocite{lind:95}
\nocite{schmidt:02}
\nocite{hunter:07}
\nocite{schmidtlind:02}
\nocite{schpet}
\nocite{schmidt:97}
\nocite{sri:98}
\nocite{lanfordrob:68}
\nocite{quas:00}
\nocite{biss:12}
\nocite{keller:98}
\nocite{israel:79}
\nocite{lanfordruelle:69}
\nocite{dob:68}
\nocite{simon:93}
\nocite{bowen:08}
\nocite{bovier1}
\nocite{bovier2}
\nocite{ny:08}
\nocite{sarig:09}
\nocite{gaans:03}
\nocite{viana}
\nocite{ward}
\nocite{walters}
\nocite{pjfernandez}
\nocite{cap:76}
\nocite{red:00}
\nocite{ruellehaydn:92}
\nocite{ruelle:92}
\nocite{haydn:92}
\nocite{muir:11}
\nocite{muir}
\nocite{pollicott}
\nocite{klenke}
\nocite{martin}
\nocite{Bogo:49}
\nocite{Bogo:69}
\nocite{sinai:72}
\nocite{minlos:67}
\nocite{Fer}
\nocite{Cio:14}

% ---------------------------------------------------------------------------- %
% Índice remissivo
\index{TBP|see{periodicidade região codificante}}
\index{DSP|see{processamento digital de sinais}}
\index{STFT|see{transformada de Fourier de tempo reduzido}}
\index{DFT|see{transformada discreta de Fourier}}
\index{Fourier!transformada|see{transformada de Fourier}}

\printindex   % imprime o índice remissivo no documento 

\end{document}